\documentclass[11pt,leqno,twoside,a4paper]{article}
\usepackage[english]{babel}
\usepackage[utf8]{inputenc}
\usepackage{authblk}
\usepackage{amsfonts,amssymb,amsmath,amsthm}
\usepackage{enumitem}
\usepackage[a4paper,top=25mm,bottom=30mm,left=30mm,right=30mm]{geometry}
\usepackage{graphicx} 
\providecommand{\widecheck}[1]{\check{#1}}
\usepackage{verbatim}
\usepackage{calc}
\usepackage{color}
\usepackage{xcolor}
\usepackage{lineno}
\usepackage[mathscr]{eucal}
\usepackage{needspace}
\usepackage{lmodern}     
\usepackage[T1]{fontenc}
\usepackage{microtype}
\usepackage{ifthen}
\usepackage[hidelinks,hyperfootnotes=false]{hyperref}
\usepackage{resmes}

\providecommand{\appendixname}{Appendix}

\usepackage{fancyhdr}
\fancypagestyle{plain}{%
	\fancyhf{}
	\fancyhead[LE,LO]{\thepage}

}

\newlength{\abstractleft}
\newlength{\abstractright}
\makeatletter
\renewenvironment{abstract}{%
	\par\small
	\list{}{%
		\setlength{\leftmargin}{\abstractleft}%
		\setlength{\rightmargin}{\abstractright}%
		\setlength{\listparindent}{0pt}%
		\setlength{\itemindent}{0pt}%
		\setlength{\parsep}{0pt}%
	}%
	\item\relax
	\noindent\textsc{Abstract.}\ %
	\ignorespaces
}{%
	\endlist
	\par\normalsize
}
\makeatother	



\newcommand{\ShortAuthors}{\scriptsize L. CARRERO, A. QUAAS, AND A. ZUNIGA} 
\newcommand{\ShortTitle}{\scriptsize
	Anisotropic capillarity and transported ABP rigidity} 

\fancypagestyle{plain}{%
	\fancyhf{}
	\fancyfoot[C]{\thepage}%
}

\usepackage{etoolbox}
\newif\ifinappendix
\pretocmd{\appendix}{%
	\inappendixtrue
	\addtocontents{toc}{\protect\inappendixtrue}%
}{}{}

\makeatletter

\renewcommand{\@seccntformat}[1]{%
	\ifinappendix
	\ifdefstring{#1}{section}{\appendixname~\thesection.\quad}{%
		\ifdefstring{#1}{subsection}{\appendixname~\thesubsection.\quad}{%
			\csname the#1\endcsname\quad
		}%
	}%
	\else
	\csname the#1\endcsname\quad
	\fi
}
\makeatletter

\newcommand{\TOCEntryFont}{}

\renewcommand{\l@section}[2]{%
	\begingroup
	\parskip=0pt
	\let\oldnumberline\numberline
	\ifinappendix
	\renewcommand{\numberline}[1]{\oldnumberline{\appendixname~##1.}}%
	\@dottedtocline{1}{1.5em}{6.8em}{{\TOCEntryFont #1}}{{\TOCEntryFont #2}}%
	\else
	\renewcommand{\numberline}[1]{\oldnumberline{##1.}}%
	\@dottedtocline{1}{1.5em}{3.0em}{{\TOCEntryFont #1}}{{\TOCEntryFont #2}}%
	\fi
	\endgroup
}

\renewcommand{\l@subsection}[2]{%
	\begingroup
	\parskip=0pt
	\let\oldnumberline\numberline
	\ifinappendix
	\renewcommand{\numberline}[1]{\oldnumberline{\appendixname~##1.}}%
	\@dottedtocline{2}{3.8em}{7.4em}{{\TOCEntryFont #1}}{{\TOCEntryFont #2}}%
	\else
	\renewcommand{\numberline}[1]{\oldnumberline{##1.}}%
	\@dottedtocline{2}{3.8em}{3.8em}{{\TOCEntryFont #1}}{{\TOCEntryFont #2}}%
	\fi
	\endgroup
}
\makeatother

\usepackage{titlesec}
\titleformat{\section}
{\normalfont\scshape\large\filcenter}
{\ifinappendix\appendixname~\thesection.\else\thesection.\fi}{0.8em}{}

\titleformat{\subsection}
{\normalfont\scshape\normalsize\filcenter}
{\ifinappendix\appendixname~\thesubsection.\else\thesubsection.\fi}{0.8em}{}

\titlespacing*{\section}{0pt}{1.5ex plus .2ex}{0.8ex}
\titlespacing*{\subsection}{0pt}{1.2ex plus .2ex}{0.6ex}

\titleformat{\subsubsection}
{\normalfont\scshape\normalsize}
{\thesubsubsection.}{0.8em}{}

\newtheorem{theorem}{Theorem}[section]
\newtheorem{lemma}[theorem]{Lemma}
\newtheorem{proposition}[theorem]{Proposition}
\newtheorem{corollary}[theorem]{Corollary}
\newtheorem{definition}[theorem]{Definition}
\newtheorem{remark}[theorem]{Remark}

\newcommand{\tension}{\varphi}
\newcommand{\Deficit}{\Theta}
\renewcommand{\P}{\operatorname{Per}}

\newcommand{\capW}[1][]{%
	\ifthenelse{ \equal{#1}{} }
	{\ensuremath{W^{\lambda}_{\tension}}}
	{{\ensuremath{W^{\lambda}_{\tension}[#1]}}}
}%

\newcommand{\capWalpha}[1][]{%
	\ifthenelse{ \equal{#1}{} }
	{\ensuremath{W^{\alpha}_{\tension}}}
	{{\ensuremath{W^{\alpha}_{\tension}[#1]}}}
}%

\newcommand{\capWv}[1][]{%
	\ifthenelse{ \equal{#1}{} }
	{\ensuremath{W^{\lambda}_{\tension}[v]}}
	{{\ensuremath{W^{\lambda}_{\tension}[#1]}}}
}%

\newcommand{\capWvalpha}[1][]{%
	\ifthenelse{ \equal{#1}{} }
	{\ensuremath{W^{\alpha}_{\tension}[v]}}
	{{\ensuremath{W^{\alpha}_{\tension}[#1]}}}
}%

\newcommand{\calH}{\mathcal{H}}

\newcommand{\C}{\mathbf{C}}

\renewcommand{\a}{\mathfrak{a}}

\newcommand{\equal}{ \ =\ }
\renewcommand{\H}{\mathbf{H}}
\newcommand{\E}{\mathcal{E}}
\newcommand{\EnergyC}{\E_{\tension,\lambda,\C}}

\newcommand{\N}{\mathbb{N}}

\newcommand{\M}{\mathsf{M}}

\newcommand{\PerAni}{\P_{\tension}}
\newcommand{\R}{\mathbb{R}}

\newcommand{\wulff}{W_{\tension}}
\newcommand{\wcap}{W_{\tension,\lambda}}

\newcommand{\x}{\mathbf{x}}

\newcommand{\Bsubdiff}[2]{%
	\ifthenelse{ \equal{#1}{} }
	{\ifthenelse{ \equal{#2}{} }
		{\ensuremath{\mathscr{B}^{\lambda,u_{\Gamma}}_{\a,\tension}}}
		{\ensuremath{\mathscr{B}^{\lambda,#2}_{\a,\tension}}}
	}
	{\ensuremath{\mathscr{B}^{#1,#2}_{\a,\tension}}}
}%
\newcommand{\mres}{\mathbin{\vrule height 1.5ex depth 0ex width 0.13ex\vrule height 0.13ex depth 0ex width 0.7em}}

\DeclareMathOperator{\dist}{dist}

\colorlet{darkred}{red!80!black}
\colorlet{darkblue}{blue!80!black}
\colorlet{darkgreen}{green!60!black}

\providecommand{\keywords}[1]
{
	\small
	\textit{Keywords: #1}
}

\title{\large\bfseries\MakeUppercase{
		Sharp planar anisotropic capillarity outside convex obstacles and
		two-point rigidity of transported boundary images
}} 
\author{\small\MakeUppercase{Lisbeth Carrero, \; Alexander Quaas,\; and\; Andres Zuniga}}
\date{}

\newcommand{\addressblock}[1]{%
	\par\noindent 
	{\scshape #1}\par\medskip
}

\newcommand{\EmailB}[1]{%
	\textit{E-mail address}: %
	{\upshape\ttfamily\bfseries\href{mailto:#1}{#1}}%
}

\makeatletter
\renewcommand{\tableofcontents}{%
	\@starttoc{toc}%
}
\makeatother

\usepackage{tikz}
\usetikzlibrary{arrows.meta,patterns}
\usepackage{caption}
\usepackage{flafter} 
\usepackage[section]{placeins}
\newcommand{\CapFigureScale}{0.84}
\definecolor{capInk}{HTML}{263445}
\definecolor{capBlue}{HTML}{0072B2}
\definecolor{capAmber}{HTML}{E69F00}
\definecolor{capGreen}{HTML}{009E73}
\definecolor{capPurple}{HTML}{AA4499}
\definecolor{capGrey}{HTML}{87929E}
\tikzset{
	cap panel/.style={x=1cm,y=1cm,font=\small,line cap=round,line join=round},
	cap edge/.style={draw=capInk,line width=.8pt},
	cap obstacle/.style={fill=capGrey!12,postaction={pattern=north east lines,pattern color=capGrey!65}},
	cap liquid/.style={fill=capBlue!22},
	cap added/.style={fill=capAmber!30,postaction={pattern=north west lines,pattern color=capAmber!85!black}},
	cap guide/.style={draw=capInk!55,line width=.45pt,densely dashed},
	cap leader/.style={draw=capInk!70,line width=.45pt},
	cap normal/.style={draw=capInk,line width=.65pt,-{Stealth[length=2mm]}},
	cap secondary/.style={fill=capGreen!23},
	cap reference/.style={draw=capGreen!65!black,line width=.75pt,densely dashed},
	cap difference/.style={fill=capPurple!22,postaction={pattern=north west lines,pattern color=capPurple!85!black}},
	cap label/.style={fill=white,inner sep=1.5pt}
}
\providecommand{\CapPanel}[1]{%
	\path[use as bounding box] (0,0) rectangle (7.2,5.1);
	\node[anchor=north west,font=\bfseries] at (.05,5.08) {(#1)};
}
\pgfmathdeclarefunction{cappocket}{1}{%
	\pgfmathparse{(#1>45 && #1<75) ? .30*sin(6*(#1-45))^2 :
		((#1>105 && #1<135) ? .30*sin(6*(#1-105))^2 : 0)}%
}
\pgfmathdeclarefunction{capinner}{1}{%
	\pgfmathparse{1+.26*(max(0,(.3025-sin(#1)^2)/.3025))^2}%
}
\def\CapEps{.05}
\pgfmathdeclarefunction{capwx}{1}{%
	\pgfmathparse{(1+\CapEps*cos(4*#1))*cos(#1)
		+4*\CapEps*sin(4*#1)*sin(#1)}%
}
\pgfmathdeclarefunction{capwy}{1}{%
	\pgfmathparse{(1+\CapEps*cos(4*#1))*sin(#1)
		-4*\CapEps*sin(4*#1)*cos(#1)}%
}
\newcommand{\CapRing}{%
	\path[cap liquid,even odd rule] (0,0) circle (1.6)
	plot[domain=0:360,samples=241] ({capinner(\x)*cos(\x)},
	{capinner(\x)*sin(\x)}) -- cycle;
	\fill[white] (0,0) circle (1);
	\fill[cap obstacle] (0,0) circle (1);
	\draw[cap edge] (0,0) circle (1.6);
	\draw[cap edge] plot[domain=0:360,samples=241]
	({capinner(\x)*cos(\x)},{capinner(\x)*sin(\x)}) -- cycle;
	\draw[cap guide] (0,0) circle (1);
}

\begin{document}
	\maketitle
	\thispagestyle{plain}
	\vspace{-2.25em} 
	
	\begingroup
	\renewcommand{\thefootnote}{}
	\footnotetext{\raggedright%
		\textit{Date:} October 27, 2026.\par 
		\keywords{Anisotropic capillarity; Wulff shapes; Alexandrov--Bakelman--Pucci method; anisotropic perimeter; crystalline anisotropies; convex obstacles.
		}\par
		2020 \textit{Mathematics Subject Classification.} Primary 49Q20; Secondary 49Q10, 52A40, 76B45.\par
		\textit{Affiliations:} Lisbeth Carrero and Andres Zuniga, Instituto de Ciencias de la Ingenier\'ia, Universidad de O'Higgins, Chile.
		Alexander Quaas, Departamento de Matem\'atica, Universidad T\'ecnica Federico Santa Mar\'ia, Chile\par
	}
	\endgroup
	
	\setcounter{footnote}{0}
	
	\begin{abstract}
		For a convex surface tension $\tension$ and the wall-energy density $\sigma(n)=-\lambda\,\tension(-n)$, we prove sharp capillary inequalities outside arbitrary convex planar obstacles for $-1<\lambda\leq0$ and every finite-perimeter competitor of finite positive volume, including non-even and crystalline tensions. Equality holds precisely for translated dilates of Wulff caps in minimum-volume directions, with the entire truncating facet resting on a flat supporting face. A uniform quantitative estimate controls component concentration, shape up to translation, and directional cap-volume excess. The proof combines exact filling, planar completion, and quantitative Wulff stability. For the transport $D\Phi$, $\Phi=\tension^2/2$, the zero-adhesion boundary-image estimate characterizes centered ellipsoids among even anisotropies with $\Phi\in C^3$ away from the origin and positive definite Hessian. Two-point slab tests with arbitrary boundary data suffice: their projection balances yield a cubic identity forcing $\Phi$ to be quadratic. Facets and smooth perturbations provide counterexamples to the transported estimate, including failures at prescribed adhesion levels in $(-1,1)$. Strict finite deficits persist under Hausdorff perturbations of the Wulff body. The classification concerns an unrestricted boundary-data property, stronger than the image estimate needed only for traces of auxiliary Neumann solutions.
	\end{abstract}
	
	\begin{center}
		{\large\textsc{Contents}}
	\end{center}
	\vspace{-.75em}
	\setcounter{tocdepth}{1}
	\tableofcontents
	
	\numberwithin{equation}{section}
	
	\vspace{1em}
	
	\section{Introduction and main results}\label{sec:introduction}
	Capillary energy balances the free surface of a drop against adhesion to its support. Isotropic equilibria on a flat wall are spherical caps~\cite{finn1986equilibrium,wente1980symmetry}. For convex obstacles, the orthogonal-contact comparison of Choe--Ghomi--Ritor\'e~\cite{choe2006total,choe2007relative} was followed by equality and ABP results~\cite{fusco2023total,liu2023relative} and the convex-cylinder comparison~\cite{fusco2025isoperimetriccylinders}. Fusco--Julin--Morini--Pratelli~\cite{fusco2026isoperimetric} proved the isotropic comparison for arbitrary convex obstacles in every dimension throughout the partial-wetting range. Nani--Vincini~\cite{nani2026generalized} developed generalized existence, describing escape to infinity through asymptotic cylinders and profile convergence under obstacle variations.
	
	The anisotropic full-space model is the Wulff body~\cite{wulff1901zur}, whose variational theory was developed by Taylor~\cite{taylor1974existence,taylor1978crystalline}, Fonseca~\cite{fonseca1991wulff}, and Fonseca--M\"uller~\cite{fonseca1991uniqueness}. The ABP proof in~\cite{cabre2016sharp} and quantitative estimates in~\cite{esposito2005quantitative,figalli2010mass,neumayer2016strong} provide comparison and stability tools used below. On a flat wall, the model is the Winterbottom construction~\cite{winterbottom1967equilibrium}, with statistical-mechanical and corner extensions~\cite{kotecky1994equilibrium,zia1988summertop}. Minimality is proved in~\cite{kholmatov2025minimality,kreutz2026winterbottom}, and quantitative stability in~\cite{kreutz2026winterbottom}. For a class of anisotropies admitting symmetrization, Baer~\cite{baer2015minimizers} treats sessile drops with gravity. Related smooth theories include free-boundary regularity and Young's law~\cite{dephilippis2015regularity}, the half-space capillary Alexandrov theorem~\cite{jia2023sharp}, and classifications of stable anisotropic capillary hypersurfaces in wedges~\cite{koiso2023stable} and a half-space~\cite{guo2023stable}. These classifications concern smooth critical points under their respective geometric assumptions.
	
	Equilibrium capillarity provides a mathematical framework for supported crystals and catalyst droplets. Exterior models for nanowire supports~\cite{fonseca2023global}, anisotropic particle models on substrates~\cite{bao2017stable}, and substrate-bound nanoparticle experiments~\cite{lewis2020winterbottom} exhibit related balances between surface and adhesion energies. Discrete crystallization results also recover Winterbottom shapes from particle interactions~\cite{friedrich2026crystallization}. These connections motivate sharp energy comparisons and quantitative control of near-minimizers beyond flat supports. Here the prescribed direction-dependent wall law permits a precise study of how obstacle geometry and anisotropy affect such comparisons. The results concern this equilibrium variational model; they do not describe growth kinetics, strain, or selection among metastable states.
	
	For general anisotropies, cap volume depends on the supporting direction and nonsymmetry affects the wetted-boundary term. Tu~\cite{tu2007anisotropic} proved a zero-adhesion exterior comparison for a Euclidean ball by a linear ABP argument; the flat-wall results above and the isotropic stability estimates in~\cite{pascale2024quantitative,carazzato2025quantitative} concern other support geometries. Here we treat arbitrary convex planar obstacles, including non-even and crystalline tensions, for $-1<\lambda\leq0$. This is the sign range in which the nonnegative contact excess in the filling identity contributes $-\lambda\,\Delta_\tension\geq0$. We combine hyperplane filling with the exterior completion of M\"ader-Baumdicker--Neumayer--Park--Rupflin~\cite[Lemma~5.7, Step~2]{mader2025quantitative}, retaining the reflected anisotropic perimeter. Their stability uses the fixed-obstacle profile under geometric nondegeneracy assumptions; ours uses the universal cap bound and freely translated caps, as distinguished in~\eqref{eq:universal-profile-deficit}.\vspace{-1em}
	
	\paragraph{Contribution relative to previous comparison results.}
	Theorem~\ref{thm:unrestricted-planar} extends the exterior comparison from the isotropic setting to arbitrary planar convex tensions for $-1<\lambda\leq0$, including non-even and crystalline tensions. Theorem~\ref{thm:unrestricted-planar-stability} below transfers quantitative Wulff stability to the universal exterior cap bound, with a constant independent of the obstacle, tension, and adhesion parameter. This comparison concerns shape up to free translation, whereas stability relative to the profile of a fixed obstacle is distinguished in~\eqref{eq:universal-profile-deficit}. Independently, we introduce the transported $\lambda$-ABP property (Definition~\ref{rev:def:abp}) and classify its zero-adhesion case in the smooth even class: Theorem~\ref{thm:two-point-rigidity} shows that validity for two-point data with arbitrary heights forces an ellipsoidal Wulff body. The two parts address the same exterior comparison problem from complementary directions: the first establishes the inequality by a variational construction, while the second identifies the rigidity of one natural boundary-image mechanism. Their logical independence makes precise the distinction between validity of the capillary inequality and validity of that mechanism.
	
	In what follows, we consider $\tension:\R^d\to[0,\infty)$,  a convex and positively one-homogeneous function satisfying
	\[
	c_\tension|q|\leq\tension(q)\leq C_\tension|q|\quad\text{ for every }q\in\R^d,
	\]
	for fixed constants $0<c_\tension\leq C_\tension<\infty$. No symmetry of $\tension$ is assumed unless stated explicitly. Its associated Wulff body is the compact convex set
	\[
	\wulff=\{p\in\R^d:\, p\cdot\nu\leq\tension(\nu)\;\,
	\text{ for every }\nu\in\mathbb{S}^{d-1}\}.
	\]
	Given a nonempty compact convex set $K\subset\R^d$, we denote its support function by 
	\[
	h_K(q):=\max_{p\in K}p\cdot q\quad \text{ for every }\; q\in\R^d.
	\]
	In particular, $0\in\operatorname{int}\wulff$ and $h_{\wulff}=\tension$. Throughout, \emph{an obstacle} is a proper closed convex set $\C\subset\R^d$ with nonempty interior. We reserve bold $\C$ for the obstacle, $C$ (with subscripts when needed) for estimate constants, and $B_r:=\{x\in\R^d:|x|\le r\}$ for the closed Euclidean ball of radius $r>0$ centered at the origin. for the Euclidean ball of radius $r>0$ centered at the origin. The wetted set is $\Gamma$, the lower contact set is $\Omega^+$, and the transported boundary image is denoted by $\mathscr B$ when abbreviated. Throughout, $d\geq 2$ denotes dimension and $\Deficit(E)$ denotes the normalized planar capillary energy deficit defined in~\eqref{eq:normalized-capillary-deficit} below; $\delta$ in the two-point rigidity argument denotes slab thickness.
	
	For an adhesion coefficient $\lambda\in\R$ and a set $E\subset\R^d\setminus\C$ of finite perimeter and finite positive volume, we define its \emph{anisotropic capillary energy} by
	\begin{equation}\label{v12:eq:energy}
		\EnergyC(E)
		:=\PerAni(E;\R^d\setminus\C)-\lambda\int_{\partial\C}\tension(-\nu_{\C})\,T_E\,d\calH^{d-1}.
	\end{equation}
	Here
	\[
	T_E := \operatorname{Tr}^{\mathrm{ext}}_{\partial\C}\chi_E
	\]
	denotes the exterior $BV$-trace of $\chi_E$ on $\partial\C$, taken from $\R^d\setminus\C$; see~\cite[Theorem~3.87]{ambrosio2000functions}, applied locally.
	The $\tension$-anisotropic perimeter functional is given by
	\[
	\PerAni(E;U)=\int_{\partial^*E\cap U}\tension(\nu_E)\,d\calH^{d-1}.
	\]
	Here $\partial^*E$ denotes the reduced boundary of $E$, $\nu_E$ its measure-theoretic outer unit normal, and $\calH^{d-1}$ the $(d-1)$-dimensional Hausdorff measure; see \cite[Chapters~3 and~15]{maggi2012sets}. The outer unit normal $\nu_{\C}$ to the obstacle is defined $\calH^{d-1}$-almost everywhere on $\partial\C$. The first term of~\eqref{v12:eq:energy} measures the anisotropic free surface. The prescribed wall-energy density is $\sigma_{\tension,\lambda}(n)=-\lambda\,\tension(-n)$: $\lambda$ is dimensionless, whereas the wall coefficient depends on the supporting direction. All results below concern this wall law. On the dry part of $\partial\C$, $T_E=0$; on the wetted part, $T_E=1$ and $\nu_E=-\nu_\C$ almost everywhere. For the finite-perimeter framework, see~\cite[Chapters~15 and~20]{maggi2012sets}.
	More generally, for a continuous, positively one-homogeneous function $f:\R^d\to\R$, a set $A$ of finite perimeter, and a Borel set $V\subset\R^d$, we write
	\[
	P_f(A;V):=\int_{\partial^*A\cap V}f(\nu_A)\,d\calH^{d-1},
	\qquad P_f(A):=P_f(A;\R^d).
	\]
	Thus $P_\tension(A;V)=\PerAni(A;V)$. The notation $P_f$ also accommodates integrands that need not be convex or nonnegative; in particular, a support function $h_K$ need not be positive away from the origin when $0\notin\operatorname{int}K$.
	
	If $\tension_b(q)=\tension(q)+b\cdot q$ remains positive away from zero, Gauss--Green for the zero extension of $E$ gives
	\begin{equation}\label{eq:wall-gauge-dependence}
		\E_{\tension_b,\lambda,\C}(E)-\EnergyC(E)
		=(1+\lambda)\int_{\partial\C}b\cdot\nu_\C\,T_E\,d\calH^{d-1}.
	\end{equation}
	The required trace integrability is established in Lemma~\ref{rev:lem:coercivity}. Thus translating the Wulff body preserves full-space anisotropic perimeter but generally changes this capillary energy. Related gauge reformulations absorb prescribed constant wetting terms on a flat wall~\cite{lu2023capillary} or compatible wedge faces~\cite{ma2026wedge}. Those reformulations transform the wall term together with the surface tension, whereas~\eqref{eq:wall-gauge-dependence} keeps the wall law tied to the chosen tension.
	
	The partial-wetting range depends on the direction. For a given anisotropy $\tension$, set
	\begin{equation}\label{v12:eq:range}
		\lambda_*(\tension):=\min_{n\in\mathbb{S}^{d-1}}\frac{\tension(n)}{\tension(-n)}.
	\end{equation}
	We assume $-1<\lambda<\lambda_*(\tension)$ whenever all directions are considered. For a specified direction $n\in\mathbb{S}^{d-1}$, the corresponding condition is $-1<\lambda<\tension(n)/\tension(-n)$. These conditions ensure that the cutting hyperplane meets the interior of $\wulff$. In particular, $\lambda_*(\tension)=1$ for even surface tensions. 
	
	Define the \emph{Wulff cap}, its \emph{directional volume}, and the \emph{minimum cap volume} by
	\begin{align}
		K_{\tension,\lambda}(n)
		&:=\wulff\cap\{p:p\cdot n\geq\lambda\,\tension(-n)\},\label{v12:eq:cap}\\
		m_{\tension,\lambda}(n)&:=|K_{\tension,\lambda}(n)|,\quad
		m_{\tension,\lambda}^{\min}:=\min_{n\in\mathbb{S}^{d-1}}m_{\tension,\lambda}(n),\label{v12:eq:min-cap}
	\end{align}
	respectively. In the uniform partial-wetting range, the minimum $m_{\tension,\lambda}^{\min}$ is positive. Indeed, the cap volume is positive and continuous in the direction $n$. Here $n$ is the outer normal of the supporting wall, whereas the truncating facet of the model cap has outer normal $-n$. The cap occupies the side of increasing $p\cdot n$. Its volume may depend on $n$, which is why the direction-independent comparison uses $m_{\tension,\lambda}^{\min}$.
	
	For $d=2$ and $-1<\lambda\leq0$, we define the \emph{normalized capillary energy deficit} of a finite-perimeter set $E\subset\R^2\setminus\C$ with $0<|E|<\infty$ by
	\begin{equation}\label{eq:normalized-capillary-deficit}
		\Deficit(E):=\frac{\EnergyC(E)}{2\sqrt{m_{\tension,\lambda}^{\min}|E|}}-1.
	\end{equation}
	This measures the relative energy excess above the universal capillary bound and is nonnegative by Theorem~\ref{thm:unrestricted-planar}. We write $\Deficit=\Deficit(E)$ when the competitor is fixed.
	
	For ease of reference, the table below collects the principal notation used in the variational comparison and the transported ABP framework.
	\begin{center}
		\small
		\begin{tabular}{p{.24\textwidth}p{.66\textwidth}}
			\hline
			\textbf{Symbol} & \textbf{Meaning} \\
			\hline
			$d$, $\Deficit(E)$ & Ambient dimension; normalized capillary deficit~\eqref{eq:normalized-capillary-deficit} \\
			$\C$, $U$ & Convex obstacle; exterior $\R^d\setminus\C$ \\
			$T_E$ & Exterior $BV$ trace $\operatorname{Tr}^{\mathrm{ext}}_{\partial\C}\chi_E$, evaluated from $\mathbb R^d\setminus\C$ \\
			$\Gamma$ & Wetted set (or wetted boundary piece) \\
			$K_{\tension,\lambda}(n)$, $\mathcal X$ & Model Wulff cap; compact boundary data set \\
			$\mathcal A_i$, $\mathcal S$ & Slope cells; union $E\cup\C$ in completion \\
			$\mathscr J$, $j_\Phi$ & Jordan curve; Jacobian density $\det D^2\Phi$ \\
			$\mathsf T$, $\mathcal T$ & Gradient transport $D\Phi$; cubic tensor $D^3\Phi$ \\
			$Q_\tension$, $\mathsf B$, $\mathscr R$ & Unit slope region; obstacle matrix; completion energy gain \\
			$\mathcal I$, $C$, $C_{\mathrm{stab}}$ & Adhesion interval; estimate constants; stability constant \\
			\hline
		\end{tabular}
	\end{center}
	
	\paragraph{Main results.} The two main contributions are a variational capillary comparison and a classification of the associated transported boundary-image property.
	\begin{enumerate}[label=$\bullet$,itemsep=0em]
		\item \emph{Sharp planar comparison and uniform stability.}
		For every convex planar obstacle, every surface tension satisfying the standing assumptions, and every exterior finite-perimeter competitor of finite positive volume, Theorem~\ref{thm:unrestricted-planar} gives
		\[
		\EnergyC(E)\geq2\sqrt{m_{\tension,\lambda}^{\min}|E|},
		\qquad -1<\lambda\leq0.
		\]
		Equality holds precisely for translated dilates of Wulff caps in minimizing directions whose entire truncating facet rests on a flat supporting face. Theorem~\ref{thm:unrestricted-planar-stability} gives uniform control of concentration in one indecomposable component, symmetric difference from a freely translated cap, and directional cap-volume excess, by transferring quantitative Wulff stability~\cite{figalli2010mass} through planar completion.
		
		\item \emph{Two-point characterization of ellipsoids.}
		For $d\geq2$, Theorem~\ref{thm:two-point-rigidity} characterizes centered ellipsoids by the transported zero-adhesion estimate among even tensions with $\Phi=\tension^2/2\in C^3(\R^d\setminus\{0\})$ and $D^2\Phi>0$. Two-point slab configurations with arbitrary boundary heights suffice. Their tangential projection balances yield a cubic identity forcing $\Phi$ to be quadratic. Facet obstructions, smooth failures at prescribed adhesion levels, and Hausdorff persistence of strict finite deficits complement this classification (Section~\ref{rev:sec:obstruction} and Appendix~\ref{app:perturbation}).
	\end{enumerate}
	
	The variational mechanism is the exact identity
	\[
	\EnergyC(E)
	=P_{h_K}(E\cup D)+R_K(E)-\lambda\,\Delta_\tension(\Gamma),
	\]
	where $K=K_{\tension,\lambda}(n)$, $h_K$ is its support function, and $P_{h_K}$ is the boundary functional defined above with $f=h_K$, integrated over the entire reduced boundary of $E\cup D$. Here $\Gamma$ is the wetted set and $D\subset\C$ is a hyperplane filling. The remainder $R_K(E)$ and the contact excess $\Delta_\tension(\Gamma)$ defined in Subsection~\ref{subsec:filling_identity} are nonnegative. Planar completion makes this identity applicable to arbitrary wetted traces when $-1<\lambda\leq0$. At zero adhesion, a further comparison treats two parallel wetted caps with arbitrary exterior connections in every dimension.
	
	Together, Theorems~\ref{thm:unrestricted-planar} and~\ref{thm:two-point-rigidity} show that every even nonellipsoidal planar tension in the regularity class of the latter satisfies the sharp capillary inequality at zero adhesion although its transported $0$-ABP property fails. The negative tests use arbitrary boundary data, which need not be traces of Neumann solutions. Proposition~\ref{rev:thm:abp} separates this geometric property from explicit analytic hypotheses; centered ellipsoids satisfy it by affine normalization.
	
	Section~\ref{sec:variat_comp_planar_stab} proves the planar comparison and stability, and a higher-dimensional result for two parallel wetted caps. Section~\ref{rev:sec:transport} develops the transported framework, including its graph measure; Section~\ref{rev:sec:ellipsoids} treats ellipsoids. Section~\ref{rev:sec:obstruction} proves facet obstructions, two-point rigidity, and the limitations of approximation. Section~\ref{sec:questions} formulates open problems. Appendix~\ref{app:perturbation} contains the perturbative construction in every dimension.
	\smallskip
	
	\section{Variational comparison and planar stability}\label{sec:variat_comp_planar_stab}
	For the contact geometries below, adjoining a portion $D$ of the obstacle to $E$ cancels the wetted interface and replaces it by a flat section. The auxiliary set $G=E\cup D$ can then be compared with a truncated Wulff body by the full-space Wulff inequality. Retaining the added volume and boundary remainders gives the exact identity used for equality and stability; $G$ itself need not be an admissible exterior competitor.
	
	For arbitrary planar traces, we first fill the \emph{exterior pockets} of $E$: this descriptive term denotes the finite-measure indecomposable components of $\R^2\setminus(E\cup\C)$, understood modulo Lebesgue-null sets. We then fill the appropriate portion of $\C$. Exterior completion decreases energy for every $\lambda>-1$; nonpositive adhesion is needed only to discard the contact excess in the second step. For $\lambda\leq0$, the inclusion $0\in K_{\tension,\lambda}(n)$ also makes cap dilations nested.
	
	We retain the standing assumptions on $\tension=h_W$ and the obstacle $\C$, and write $U=\R^d\setminus\C$.
	
	For $n\in\mathbb{S}^{d-1}$ and $s\in\R$, we use the notation
	\[
	\begin{aligned}
		& \Pi_s(n) :=\{x\in\R^d:\, x\cdot n=s\},\\
		& \H_s^+(n) :=\{x\in\R^d:\, x\cdot n>s\},
		\quad\text{ and }\quad \H_s^-(n):=\{x\in\R^d:\, x\cdot n<s\}.
	\end{aligned}
	\]
	These are the open half-spaces on the sides of
	increasing and decreasing $x\cdot n$, respectively,
	with common boundary $\Pi_s(n)$. Their closures are denoted by $\overline{\H_s^\pm(n)}$. The outer unit normals to $\H_s^+(n)$ and $\H_s^-(n)$ are $-n$ and $n$, respectively.
	
	In the capillary energy, traces on $\partial\C$ are
	always understood from the exterior of the obstacle.
	By Lemma~\ref{rev:lem:coercivity} below, finite volume and finite relative perimeter imply finite total perimeter, so all boundary integrals appearing below are finite.
	
	\subsection{Trace inequality and coercivity}\label{rev:subsec:coercivity}
	Convexity controls the wetted trace by a free perimeter with reflected tension, accounting for the wall orientation $\nu_E=-\nu_{\C}$. Comparing this tension with $\tension$ leaves a positive fraction of the total perimeter throughout the partial-wetting range.
	
	The estimate is independent of the transported boundary-image hypothesis and justifies extension by zero across the obstacle.
	\begin{lemma}[Exterior trace inequality and coercivity]\label{rev:lem:coercivity}
		Let $\C\subsetneq\R^d$ be closed and convex with nonempty interior, and put $U=\R^d\setminus\C$. Let $E\subset U$ satisfy
		\[
		0<|E|<\infty,
		\qquad
		\chi_E\in BV(U).
		\]
		Define
		\[
		\widecheck\tension(\nu):=\tension(-\nu),
		\qquad
		I_\tension(E;\C)
		:= \int_{\partial\C}\tension(-\nu_{\C})\,T_E\,d\calH^{d-1}.
		\]
		Then the extension of $\chi_E$ by zero to $\C$ belongs to $BV(\R^d)$, and
		\begin{equation}\label{rev:eq:trace}
			I_\tension(E;\C) 
			\leq \P_{\widecheck\tension}(E;U)
			\leq \frac{1}{\lambda_*(\tension)}
			\PerAni(E;U).
		\end{equation}
		In particular, if $k=\lambda_*(\tension)$ and $-1<\lambda<k$, then
		\begin{equation}\label{rev:eq:coercivity}
			\begin{split}
				\EnergyC(E)
				&\geq\frac{(k-\lambda)}{1+k}\PerAni(E;\R^d)\\
				&\geq\frac{(k-\lambda)}{1+k}d|\wulff|^{1/d}|E|^{(d-1)/d}.
			\end{split}
		\end{equation}
	\end{lemma}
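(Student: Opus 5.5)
The key tool is the nearest-point projection onto the convex obstacle, which is $1$-Lipschitz, combined with the divergence theorem applied to a calibrating vector field. We work first with smooth (or polyhedral) obstacles and bounded competitors, and then pass to the general case by approximation.

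\emph{Step 1 (trace bound via calibration).} Let $\pi:\R^d\to\C$ be the nearest-point projection. On $U$ define
\[
X(x):=-\nabla_\nu\widecheck\tension\ \text{evaluated at}\ \nu_\C(\pi(x)),
\]
or, more robustly, a measurable selection $X(x)\in\partial\widecheck\tension(\nu_\C(\pi(x)))$. Then $X$ takes values in the Wulff body of $\widecheck\tension$, which is $-\wulff$, so
\[
X\cdot\nu\le\widecheck\tension(\nu)\quad\text{for every }\nu,
\qquad
X\cdot(-\nu_\C)=\tension(-\nu_\C)\ \text{on }\partial\C.
\]
Taking $\nu=-\nu_\C$ gives $\widecheck\tension(-\nu_\C)=\tension(\nu_\C)$, so the sign bookkeeping must be checked against the claimed integrand $\tension(-\nu_\C)$.

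Simpler is to choose $X=Y\circ\pi$, where $Y(y)\in\partial\tension(-\nu_\C(y))$, so that $Y\cdot(-\nu_\C)=\tension(-\nu_\C)$ and $Y\in\wulff$. The key point is that $\operatorname{div}X\ge0$ in the distributional sense on $U$. For smooth strictly convex $\C$ and smooth $\tension$ this follows because $x\mapsto -\nu_\C(\pi(x))$ composed with $D\tension$ has derivative $D^2\tension\cdot(-D(\nu_\C\circ\pi))$, and $D(\nu_\C\circ\pi)$ is a symmetric-type nonnegative operator. Since $\operatorname{div}X=-\operatorname{tr}\big(D^2\tension(-\nu)\,D(\nu\circ\pi)\big)$, this has a sign: $D(\nu\circ\pi)\ge0$ and $D^2\tension\ge0$ give a trace of a product of nonnegative matrices that is nonnegative, so $\operatorname{div}X\le0$. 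Applying Gauss--Green on $E$ with outer normal $\nu_E$, and using $\nu_E=-\nu_\C$ on the wetted part,
\[
\int_{\partial^*E\cap U}X\cdot\nu_E
+\int_{\partial\C}X\cdot(-\nu_\C)\,T_E
=\int_E\operatorname{div}X\le0.
\]
This gives
\[
I_\tension(E;\C)\le-\int X\cdot\nu_E\le\int\tension(-\nu_E)=\P_{\widecheck\tension}(E;U),
\]
since $-X\in-\wulff$ and so $-X\cdot\nu\le\tension(-\nu)$. This sign choice is exactly the right one. To justify the Gauss--Green formula for $BV$ sets with traces, we approximate $E$ by $E\cap B_R$ (where finite volume and coarea give good radii, and the boundary term $\int_{E\cap\partial B_R}|X|\to0$ along a subsequence), and use the trace theorem cited earlier, which yields the zero-extension $BV$ claim once $I_\tension<\infty$.

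\emph{Step 2 (comparison and coercivity).} From the definition of $\lambda_*$,
\[
\tension(-\nu)\le\tension(\nu)/\lambda_*,
\]
which gives the second inequality in \eqref{rev:eq:trace}.

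For \eqref{rev:eq:coercivity}, write $P=\PerAni(E;U)$ and $I=I_\tension$. The full-space anisotropic perimeter of the zero extension is $P+I$, since on the wetted part the normal is $-\nu_\C$ and the integrand is $\tension(-\nu_\C)$. The energy is $P-\lambda I$.
\begin{itemize}
\item If $\lambda\le0$, then $P-\lambda I\ge P+\lambda' I$ with $\lambda'=-\lambda$. Using $I\le P/k$, we need $P-\lambda I\ge\frac{k-\lambda}{1+k}(P+I)$, which is equivalent to
\[
\frac{1+\lambda}{1+k}P\ge\frac{k(1+\lambda)}{1+k}\,I,
\]
that is, $P\ge kI$. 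This is true.
\item If $\lambda>0$, the same algebra applies, since the reduction above did not use the sign of $\lambda$.
\end{itemize}
Finally, the Wulff inequality $\PerAni(F)\ge d|\wulff|^{1/d}|F|^{(d-1)/d}$ gives the last line.

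\emph{Main obstacle.} The delicate step is making $\operatorname{div}X\le0$ rigorous for a nonsmooth $\C$ and a nonsmooth (for example crystalline) $\tension$. We plan to approximate $\C$ from outside by smooth strictly convex sets and $\tension$ by smooth uniformly convex tensions, prove the inequality there, and pass to the limit. This uses the lower semicontinuity of $\P_{\widecheck\tension}$ together with the convergence of exterior traces, which in turn is controlled by the uniform bound itself.
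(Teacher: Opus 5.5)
Your proof is correct in substance, but it takes a different route from the paper.

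\textbf{The paper's route.} It never constructs a vector field. It shows that intersecting with a half-space cannot increase $P_\psi$ for any convex one-homogeneous $\psi$: the flat cut costs $\psi$ of the resultant normal, which Jensen bounds by the removed boundary. From this it deduces that a bounded convex set has no larger $P_\psi$ than any set containing it. It then applies this to $\C\subset\C\cup E$ with $\psi=\widecheck\tension$, cancels $P_\psi(\C)$, and handles unbounded obstacles by truncating both $\C$ and $E$.

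\textbf{Your route.} You calibrate with $X=D\tension(-\nabla d_{\C})$. This field takes values in $\wulff$, satisfies $X\cdot(-\nu_{\C})=\tension(-\nu_{\C})$ on the wall, and has $\operatorname{div}X=-\operatorname{tr}\bigl(D^2\tension(-\nabla d_{\C})\,D^2d_{\C}\bigr)\le0$. Gauss--Green then gives the inequality with the explicit nonnegative remainder $-\int_E\operatorname{div}X$.

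\textbf{Points to clean up.}
\begin{enumerate}
\item You first assert $\operatorname{div}X\ge0$, then derive and use $\operatorname{div}X\le0$. Only the latter is correct and needed.
\item The symmetric nonnegative matrix is $D(\nu_{\C}\circ\pi)=D^2d_{\C}$, the Hessian of the convex distance function, which is locally $C^{1,1}$ on $U$ for every convex $\C$. So the sign requires neither smoothness nor strict convexity of $\C$.
\item What does require a limit is the wall term, because $X$ does not extend continuously to corners of $\partial\C$. One option is to apply Gauss--Green on $E\cap\{d_{\C}>t\}\cap B_R$ and let $t\downarrow0$. Alternatively, in your outer approximation, use the competitor $E\setminus\C_k$ (not $E$). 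Then apply Reshetnyak lower semicontinuity of the wall term to the vector measures $T^{(k)}\nu_{\C_k}\calH^{d-1}\mres\partial\C_k$, which converge weakly-$*$ to $T_E\nu_{\C}\calH^{d-1}\mres\partial\C$. The mass bound comes from the approximate inequality itself, which is presumably what you meant by ``controlled by the uniform bound itself.''
\item Smoothing a crystalline $\tension$ is harmless. Both sides converge under uniform convergence on the sphere once $\int_{\partial\C}T_E\,d\calH^{d-1}<\infty$, so no lower semicontinuity is needed there.
\end{enumerate}

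\textbf{Comparison.} The paper's route is more elementary and covers arbitrary convex tensions and obstacles without any regularization. Yours needs regularization in the tension and at the wall, but it is local and yields a calibration remainder. It also never cancels $P_\psi(\C)$, so it needs no truncation of the obstacle, only of $E$. The coercivity algebra is identical to the paper's.
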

	
	\begin{proof}[Proof of Lemma~\ref{rev:lem:coercivity}]
		
		The geometric input is that a flat cut cannot cost more than the boundary it replaces: convexity compares its single resultant normal with the original distribution of normals.
		For a convex one-homogeneous tension $\psi$ and a set $F$ of finite perimeter and finite measure, intersection with a half-space does not increase perimeter. Indeed, for $H=\H_s^-(n)$ and almost every $s$, BV-slicing and Gauss--Green on $F\cap H^c$ give
		\[
		\begin{aligned}
			P_\psi(F\cap H)&=P_\psi(F;H)+\psi(n)\calH^{d-1}(F^{(1)}\cap\partial H),\\
			\int_{\partial^*F\cap H^c}\nu_F\,d\calH^{d-1}
			&=n\calH^{d-1}(F^{(1)}\cap\partial H).
		\end{aligned}
		\]
		By convexity and homogeneity, the boundary term in the first line is bounded by $P_\psi(F;H^c)$, proving $P_\psi(F\cap H)\leq P_\psi(F)$. Exceptional levels follow by approximation from outside and lower semicontinuity. If a bounded convex set $K$ is contained in $F$, successive intersections with a countable family of supporting half-spaces decrease to $K$ in $L^1$; hence $P_\psi(K)\leq P_\psi(F)$, without symmetry assumptions on $\psi$.
		
		For bounded $\C$ and $E$ of finite total perimeter, apply this to $\C\subset\C\cup E$. BV gluing gives
		\[
		P_\psi(\C)\leq P_\psi(\C\cup E)
		=P_\psi(E;U)+P_\psi(\C)-\int_{\partial\C}\psi(\nu_\C)\,T_E\,d\calH^{d-1}.
		\]
		Choosing $\psi=\widecheck\tension$ proves the first trace inequality in this case.
		
		In general, the boundary of a convex obstacle with nonempty interior is locally Lipschitz. Thus, for almost every large $R$, local BV gluing makes $E_R=E\cap B_R$ a set of finite total perimeter. Apply the preceding argument to $\C_{2R}=\C\cap\overline{B_{2R}}$ and $E_R$. Since $E_R$ vanishes near the artificial boundary of $\C_{2R}$,
		\[
		\begin{aligned}
			\int_{\partial\C\cap B_R}\tension(-\nu_\C)\,T_E\,d\calH^{d-1}
			&\leq P_{\widecheck\tension}(E_R;U)\\
			&\leq P_{\widecheck\tension}(E;U\cap B_R)
			+C_\tension\int_{\partial B_R}\chi_E\,d\calH^{d-1}.
		\end{aligned}
		\]
		Finite volume makes the last term tend to zero along a sequence $R\to\infty$. Monotone convergence proves the trace inequality. The bound $\widecheck\tension\leq\tension/k$ proves its second part; trace integrability and BV gluing give finite total perimeter for the zero extension.
		
		Write $P=\PerAni(E;U)$ and $I=I_\tension(E;\C)$, so that $\PerAni(E)=P+I$ and $P-kI\geq0$. Then
		\[
		P-\lambda I-\frac{k-\lambda}{1+k}(P+I)
		=\frac{1+\lambda}{1+k}(P-kI)\geq0.
		\]
		The full-space Wulff inequality completes~\eqref{rev:eq:coercivity}.
	\end{proof}
	
	\subsection{The filling identity}\label{subsec:filling_identity}
	There are three fillings: close a proper wetted cap by its flat section, leave contact in a supporting face unchanged, or fill the interior of a compact obstacle. In each case the wetted interface is replaced by a planar boundary.
	\begin{enumerate}[label=\arabic*.,itemsep=0.25em]
		\item A hyperplane $\Pi_s(n)$ intersects $\operatorname{int}\C$, and $D=\C\cap \H_s^+(n)$ is a bounded convex cap. Set
		\[
		\Gamma:=\partial\C\cap \H_s^+(n),
		\qquad S:=\C\cap\Pi_s(n),
		\]
		and suppose $T_E=\chi_\Gamma$ almost everywhere on $\partial\C$.
		
		\item The wetted set $\Gamma$ is a bounded measurable subset of a supporting face of $\C$ with outer normal $n\in\mathbb{S}^{d-1}$, has positive $\calH^{d-1}$ measure, and $T_E=\chi_\Gamma$ almost everywhere. In this case set $D:=\emptyset$ and $S:=\Gamma$.
		
		\item $\C$ is compact, $\Pi_s(n)$ is a supporting hyperplane with $\C\subset\overline{\H_s^+(n)}$, and $S\subset\C\cap\Pi_s(n)$ is a compact subset of the corresponding face whose relative boundary is $\calH^{d-1}$-null. Set $D:=\operatorname{int}\C$ and $\Gamma:=\overline{\partial\C\setminus S}$, and suppose $T_E=\chi_\Gamma$ almost everywhere on $\partial\C$.
	\end{enumerate}
	In all cases put $\mathsf{A}_S:=\calH^{d-1}(S)$ and $G:=E\cup D$. Boundary sets and fillings are understood modulo the relevant null sets.
	
	Figure~\ref{fig:filling-flat-contact} illustrates the filling construction and the flat-contact equality configuration for a crystalline Wulff body and polyhedral obstacles.
	
	\begin{figure}[!htbp]
		\centering
		\begingroup
		\colorlet{crysInk}{capInk}
		\colorlet{crysBlue}{capBlue!22}
		\colorlet{crysGrey}{capGrey!12}
		\colorlet{crysAmber}{capAmber!30}
		\tikzset{
			crys edge/.style={draw=crysInk,line width=.67pt},
			crys guide/.style={draw=crysInk!55,line width=.38pt,densely dashed},
			crys leader/.style={draw=crysInk!70,line width=.38pt},
			crys normal/.style={draw=crysInk,line width=.54pt,-{Stealth[length=1.6mm]}},
			crys label/.style={fill=white,inner sep=1pt},
			crys obstacle/.style={pattern=north east lines,pattern color=capGrey!65},
			crys added/.style={pattern=north west lines,pattern color=capAmber!85!black}
		}
		\begin{tikzpicture}[x=1cm,y=1cm,font=\fontsize{8.4}{10}\selectfont,
			line cap=round,line join=round]
			\path[use as bounding box] (0,0) rectangle (15,3.85);
			\foreach \x/\letter in {0/a,5.05/b,10.10/c}{
				\node[anchor=north west,font=\fontsize{9.2}{11}\selectfont\bfseries]
				at (\x,3.80) {(\letter)};
			}
			
			\begin{scope}[shift={(2.15,1.88)},scale=1.45]
				\path[fill=crysGrey]
				(-1,0)--(-.5,{sqrt(3)/2})--(.5,{sqrt(3)/2})
				--(1,0)--(.5,{-sqrt(3)/2})--(-.5,{-sqrt(3)/2})--cycle;
				\path[fill=crysBlue]
				(-1,0)--(-.5,{sqrt(3)/2})--(.5,{sqrt(3)/2})--(1,0)--cycle;
				\draw[crys edge]
				(-1,0)--(-.5,{sqrt(3)/2})--(.5,{sqrt(3)/2})
				--(1,0)--(.5,{-sqrt(3)/2})--(-.5,{-sqrt(3)/2})--cycle;
				\draw[crys guide] (-1.20,0)--(1.57,0);
				\draw[crys edge,line width=.90pt] (-1,0)--(1,0);
				\node at (0,.42) {$K$};
				\node at (0,-.43) {$W_\tension$};
				\draw[crys normal] (1.24,.09)--(1.24,.66);
				\node[anchor=west] at (1.32,.58) {$n$};
				\node[anchor=west] at (1.07,-.18) {$p_2=0$};
			\end{scope}
			\node at (2.36,.12) {$K=K_{\tension,0}(e_2)$};
			
			\begin{scope}[shift={(7.57,1.72)},scale=1.45]
				\path[fill=crysGrey]
				(-1.52,-.78)--(-1.52,-.234)--(0,.45)
				--(1.52,-.234)--(1.52,-.78)--cycle;
				\path[crys obstacle]
				(-1.52,-.78)--(-1.52,-.234)--(0,.45)
				--(1.52,-.234)--(1.52,-.78)--cycle;
				\draw[crys edge] (-1.52,-.234)--(0,.45)--(1.52,-.234);
				\path[fill=crysBlue]
				(-1,0)--(-.5,{sqrt(3)/2})--(.5,{sqrt(3)/2})
				--(1,0)--(0,.45)--cycle;
				\path[fill=crysAmber] (-1,0)--(0,.45)--(1,0)--cycle;
				\path[crys added] (-1,0)--(0,.45)--(1,0)--cycle;
				\draw[crys guide] (-1.57,0)--(1.57,0);
				\draw[crys edge]
				(-1,0)--(-.5,{sqrt(3)/2})--(.5,{sqrt(3)/2})--(1,0);
				\draw[crys guide] (-1,0)--(0,.45)--(1,0);
				\draw[crys edge,line width=.90pt] (-1,0)--(1,0);
				\node at (0,.68) {$E$};
				\node[crys label,fill=crysAmber] at (0,.20) {$D$};
				\node[crys label] at (-.74,-.49) {$\C$};
				\node[crys label] at (-.63,-.13) {$S$};
				\draw[crys normal] (.08,-.025)--(.08,-.53);
				\node[crys label,anchor=west] at (.17,-.43) {$-n$};
				\draw[crys leader] (-1.29,.60)--(-.53,.2385);
				\node[anchor=east,crys label] at (-1.29,.60) {$\Gamma$};
			\end{scope}
			\node at (7.57,.12) {$G=E\cup D=K$};
			
			\begin{scope}[shift={(12.53,1.72)},scale=1.45]
				\path[fill=crysGrey]
				(-1.50,0)--(1.50,0)--(1.31,-.48)--(.62,-.78)
				--(-.84,-.78)--(-1.45,-.40)--cycle;
				\path[crys obstacle]
				(-1.50,0)--(1.50,0)--(1.31,-.48)--(.62,-.78)
				--(-.84,-.78)--(-1.45,-.40)--cycle;
				\draw[crys edge]
				(-1.50,0)--(1.50,0)--(1.31,-.48)--(.62,-.78)
				--(-.84,-.78)--(-1.45,-.40)--cycle;
				\path[fill=crysBlue]
				(-1,0)--(-.5,{sqrt(3)/2})--(.5,{sqrt(3)/2})--(1,0)--cycle;
				\draw[crys edge]
				(-1,0)--(-.5,{sqrt(3)/2})--(.5,{sqrt(3)/2})--(1,0);
				\draw[crys edge,line width=.90pt] (-1,0)--(1,0);
				\node at (0,.42) {$E$};
				\node[crys label] at (0,-.53) {$\C$};
				\node[crys label] at (0,-.18) {$S=\Gamma$};
				\draw[crys normal] (1.22,.07)--(1.22,.66);
				\node[anchor=west] at (1.30,.58) {$n$};
			\end{scope}
			\node at (12.53,.12) {$E=K,\quad D=\varnothing$};
		\end{tikzpicture}
		\endgroup
		
		\caption{Crystalline filling and flat contact
			(Lemma~\ref{rev:lem:filling} and Theorem~\ref{rev:thm:filling}),
			with $\lambda=0$ and $n=e_2$.
			(a) A centrally symmetric hexagonal Wulff body and its cap $K$.
			(b) Adding the amber $D\subset\C$ to $E$ removes $\Gamma$ and leaves
			$S$ with outer normal $-n$; the cone is unbounded.
			(c) The same $K$ gives equality on a polygonal obstacle.
			Grey hatching denotes $\C$; all directions minimize $m_{\tension,0}$. }
		\label{fig:filling-flat-contact}
	\end{figure}
	
	The wetted set and flat section have the same normal flux but may have different anisotropic costs. The contact excess measures this difference. It vanishes when one point of $W$ supports all reflected contact normals, which need not coincide for a crystalline $W$.
	
	\begin{lemma}[BV filling and contact excess]\label{rev:lem:filling}
		Under the preceding assumptions,
		\begin{align}
			|G|&=|E|+|D|,\label{v12:eq:hd-volume}\\
			\int_\Gamma \nu_{\C}&\,d\calH^{d-1}=\mathsf{A}_S\,n,\label{v12:eq:hd-flux}\\
			P_f(G)&=P_f(E;U)+\mathsf{A}_S\,f(-n)\label{v12:eq:hd-pasting}
		\end{align}
		for every continuous positively one-homogeneous function $f:\R^d\to\R$. Moreover, the contact excess function on $\Gamma$, $\Delta_\tension(\Gamma)$, is nonnegative:
		\[
		\Delta_\tension(\Gamma)
		:=\int_\Gamma\tension(-\nu_{\C})\,d\calH^{d-1}-\mathsf{A}_S\,\tension(-n)\geq0.
		\]
		Equality holds if and only if there exists $z\in W$ such that
		\begin{equation}\label{v12:eq:hd-calibration}
			z\cdot(-\nu_{\C})=\tension(-\nu_{\C})
			\qquad\calH^{d-1}\text{-almost everywhere on }\Gamma.
		\end{equation}
		Equivalently, $-\nu_{\C}\in N_W(z)$ almost everywhere, where
		$N_W(z):=\{\nu\in\R^d:\,z\cdot\nu=h_W(\nu)\}$ is the normal cone of $W$ at $z$. If $W$ is a polytope, $z$ can be chosen to be a vertex.
	\end{lemma}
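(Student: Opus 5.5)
We treat the three filling cases in parallel. Every identity reduces to BV gluing across $\partial\C$ together with the divergence theorem on $D$. The contact-excess statement is then a pointwise convexity argument on the integrand $\tension(-\nu_\C)$.

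\emph{Volume and flux.} Identity \eqref{v12:eq:hd-volume} holds because $E\subset U$ and $D\subset\C$ are disjoint up to null sets.

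For \eqref{v12:eq:hd-flux} in case 1, we apply the Gauss--Green theorem to the bounded convex (hence Lipschitz) set $D=\C\cap\H_s^+(n)$. Its boundary splits as $\Gamma$, with normal $\nu_\C$, and $S$, with outer normal $-n$. Hence $0=\int_\Gamma\nu_\C-\mathsf A_S n$.

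In case 2, $\nu_\C=n$ on $\Gamma=S$, so the identity is trivial.

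In case 3, $\int_{\partial\C}\nu_\C=0$ and $\nu_\C=-n$ on $S$. Hence $\int_\Gamma\nu_\C=\mathsf A_S n$, using that the relative boundary of $S$ is $\calH^{d-1}$-null.

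\emph{Pasting.} For \eqref{v12:eq:hd-pasting}, Lemma~\ref{rev:lem:coercivity} gives that the zero extension of $\chi_E$ lies in $BV(\R^d)$. We glue $\chi_E$ and $\chi_D$ along $\partial\C$ using \cite[Theorem~3.84]{ambrosio2000functions}. The jump part of $D\chi_G$ on $\partial\C$ is $(T_E-\operatorname{Tr}^{\mathrm{int}}\chi_D)\,\nu_\C\,\calH^{d-1}$.

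The interior trace of $\chi_D$ equals $\chi_\Gamma$ in cases 1 and 3, and $0$ in case 2. The exterior trace is $\chi_\Gamma$ by hypothesis. So the jumps cancel on $\Gamma$, and on the rest of $\partial\C$ both traces vanish.

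What remains is the reduced boundary of $E$ inside $U$, together with the part of $\partial^*D$ off $\partial\C$. That part is $S$, with outer normal $-n$: in case 1 it is the flat section, in case 3 it is $\partial\C\setminus\Gamma=S$ itself, and in case 2 it is $\Gamma=S$ directly, where $\nu_E=-\nu_\C=-n$. Integrating $f(\nu_G)$ gives the formula for every continuous one-homogeneous $f$. No sign or convexity of $f$ is needed, since only the measure identity for $D\chi_G$ is used. The main care point is checking that $\partial^*G$ has no residual piece on $\partial\C$, which the trace cancellation handles.

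\emph{Nonnegativity of the excess.} Since $\tension$ is convex and one-homogeneous, it is sublinear. Jensen's inequality for vector integrals together with \eqref{v12:eq:hd-flux} gives
\[
\int_\Gamma\tension(-\nu_\C)\,d\calH^{d-1}\ \ge\ \tension\Big(-\int_\Gamma\nu_\C\,d\calH^{d-1}\Big)=\mathsf A_S\,\tension(-n),
\]
so $\Delta_\tension(\Gamma)\ge0$.

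\emph{Characterization of equality.} If $z\in W$ satisfies \eqref{v12:eq:hd-calibration}, then integrating $z\cdot(-\nu_\C)$ yields $\mathsf A_S\, z\cdot(-n)\le\mathsf A_S\,\tension(-n)$, which is the reverse inequality. Hence equality holds.

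Conversely, suppose equality holds. Choose $z\in W$ with $z\cdot(-n)=\tension(-n)$, which is possible since $h_W=\tension$. Then
\[
0=\Delta_\tension(\Gamma)=\int_\Gamma\big(\tension(-\nu_\C)-z\cdot(-\nu_\C)\big)\,d\calH^{d-1}.
\]
The integrand is nonnegative because $z\in W$, so it vanishes almost everywhere on $\Gamma$. This is exactly \eqref{v12:eq:hd-calibration}, and it is equivalent to $-\nu_\C\in N_W(z)$ almost everywhere.

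If $W$ is a polytope, the face $F$ of $W$ exposed by $-n$ is itself a polytope. A vertex $v$ of $F$ is a vertex of $W$. Moreover $N_W(z)\subset N_W(v)$ for every $z$ in the relative interior of $F$, and in fact for every $z\in F$ whose normal cone contains all the required normals. To see this, note that the set of calibrating $z$, namely $\{z\in W: z\cdot\nu=h_W(\nu)\ \text{for a.e.\ relevant }\nu\}$, is a nonempty face of $W$ contained in $F$. Any of its vertices works.
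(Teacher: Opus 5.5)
Your proposal is correct and follows essentially the paper's route. The flux comes from Gauss--Green on $D$, the pasting identity from cancelling the wetted jump across $\partial\C$, and the excess characterization from a support point $z\in W$ with $z\cdot(-n)=\tension(-n)$. Your Jensen step is the same sublinearity of $\tension=h_W$, written differently.

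Two local slips should be repaired.

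\emph{Trace bookkeeping in the pasting step.} In geometry~3 the interior trace of $\chi_{\operatorname{int}\C}$ is $1$ on all of $\partial\C$, not $\chi_\Gamma$. In geometry~2 nothing cancels on $\Gamma$, because $D=\emptyset$. In both cases the flat term is the uncancelled jump $n\,\calH^{d-1}\mres S$, which lies on $\partial\C$ itself. It is not a piece of $\partial^*D$ off $\partial\C$, so your sentence that both traces vanish on the rest of $\partial\C$ is false in geometry~3. The cleanest bookkeeping is the paper's: use $\chi_G=\chi_E+\chi_D$ and add the two derivative formulas. This gives $D\chi_G=D\chi_E\mres U+n\,\calH^{d-1}\mres S$ in all three geometries, with no gluing theorem needed.

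\emph{Polytope clause.} The normal-cone detour is unnecessary. Its second clause is also false as literally written, since distinct vertices of a face of a full-dimensional polytope do not have nested normal cones. Your own converse argument already works for every $z$ with $z\cdot(-n)=\tension(-n)$. So you may simply take $z$ to be a vertex of the exposed face $\{p\in W:\,p\cdot(-n)=\tension(-n)\}$, which is a vertex of $W$. This is slightly more direct than the paper's decomposition of a calibrating $z$ into a convex combination of vertices.
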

	
	\begin{proof}[Proof of Lemma~\ref{rev:lem:filling}]
		In geometry~1, $D$ is a bounded convex body modulo its boundary. Its reduced boundary is $\Gamma\cup S$, with normals $\nu_\C$ and $-n$: the intersection of the cutting hyperplane with $\partial\C$ is the relative boundary of a convex section and is $\calH^{d-1}$-null. In geometry~3 the same boundary description holds for $D=\operatorname{int}\C$. Gauss--Green on $D$ proves~\eqref{v12:eq:hd-flux} in both cases. In geometry~2 it follows from $\nu_\C=n$ on $S=\Gamma$.
		
		Use $D\chi_F=(-\nu_F\calH^{d-1})\mres\partial^*F$ and extend $E$ by zero into $\C$. Lemma~\ref{rev:lem:coercivity} gives finite total perimeter. The BV trace formula~\cite[Section~3.8]{ambrosio2000functions} yields
		\[
		\begin{aligned}
			D\chi_E&=D\chi_E\mres U+(T_E\nu_\C\calH^{d-1})\mres\partial\C,\\
			D\chi_D&=(-\nu_\C\calH^{d-1})\mres\Gamma+(n\calH^{d-1})\mres S.
		\end{aligned}
		\]
		The second identity applies in geometries~1 and~3. Since $\chi_G=\chi_E+\chi_D$ and $T_E=\chi_\Gamma$, cancellation gives
		$D\chi_G=D\chi_E\mres U+(n\calH^{d-1})\mres S$.
		In geometry~2 this follows directly from $S=\Gamma$ and $\nu_\C=n$. The two measures on the right are mutually singular, proving~\eqref{v12:eq:hd-pasting}. Disjointness in volume gives~\eqref{v12:eq:hd-volume}.
		
		Choose $z\in W$ maximizing $z\cdot(-n)$, so that $z\cdot(-n)=h_W(-n)=\tension(-n)$. Taking the scalar product of~\eqref{v12:eq:hd-flux} with $-z$ gives
		\[
		\int_\Gamma z\cdot(-\nu_\C)\,d\calH^{d-1}
		=\mathsf{A}_S\,z\cdot(-n)
		=\mathsf{A}_S\,\tension(-n).
		\]
		Substituting this expression into the definition of $\Delta_\tension(\Gamma)$ yields
		\[
		\Delta_\tension(\Gamma)=\int_\Gamma
		\bigl(\tension(-\nu_\C)-z\cdot(-\nu_\C)\bigr)\,d\calH^{d-1}.
		\]
		The nonnegative integrand proves the excess bound and the necessity of~\eqref{v12:eq:hd-calibration}. Conversely, a calibrating $z$ gives $\int_\Gamma\tension(-\nu_\C)=\mathsf{A}_S\,z\cdot(-n)\leq\mathsf{A}_S\,\tension(-n)$, hence zero excess. The normal-cone formulation is the support equality. For a polytope, express $z$ as a convex combination of vertices. The corresponding nonnegative support deficits have weighted sum zero almost everywhere, so every vertex with positive weight also calibrates $\Gamma$.
	\end{proof}
	
	For a unit vector $n\in\mathbb{S}^{d-1}$ we assume the \emph{directional partial-wetting condition}, namely,
	\begin{equation}\label{v12:eq:hd-partial}
		-1<\lambda<\frac{\tension(n)}{\tension(-n)},
	\end{equation}
	and recall the truncated Wulff body with cutting facet $F_{\tension,\lambda}(n)$ defined by
	\[
	K_{\tension,\lambda}(n)
	=W\cap\overline{\H_{\lambda\,\tension(-n)}^+(n)},
	\qquad
	F_{\tension,\lambda}(n)
	=K_{\tension,\lambda}(n)\cap\Pi_{\lambda\,\tension(-n)}(n).
	\]
	It follows that $K_{\tension,\lambda}(n)$ has positive volume, $h_{K_{\tension,\lambda}(n)}\leq\tension$, and $h_{K_{\tension,\lambda}(n)}(-n)=-\lambda\,\tension(-n)$.
	
	Write $K=K_{\tension,\lambda}(n)$. Its support function gives the required density on the artificial flat boundary. The nonnegative remainder $R_K(E)$ measures the reduction from $\tension$ to $h_K$ on the free boundary.
	
	\begin{proposition}[Exact filling identity]\label{rev:prop:identity}
		Under the hypotheses of Lemma~\ref{rev:lem:filling} and~\eqref{v12:eq:hd-partial}, let $K=K_{\tension,\lambda}(n)$ and
		\[
		R_K(E)=\int_{\partial^*E\cap U}[\tension(\nu_E)-h_K(\nu_E)]\,d\calH^{d-1}\geq 0.
		\]
		Then
		\begin{equation}\label{rev:eq:filling-identity}
			\EnergyC(E)=P_{h_K}(G)+R_K(E)-\lambda\,\Delta_\tension(\Gamma).
		\end{equation}
		In particular,
		\begin{equation}\label{rev:eq:filling-bound}
			\EnergyC(E)
			\geq d|K|^{\frac 1d}(|E|+|D|)^{\frac{d-1}d}+R_K(E)-\lambda\,\Delta_\tension(\Gamma).
		\end{equation}
	\end{proposition}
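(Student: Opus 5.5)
The identity \eqref{rev:eq:filling-identity} is pure bookkeeping built on Lemma~\ref{rev:lem:filling}; the inequality \eqref{rev:eq:filling-bound} then follows from the Wulff inequality for the cap $K$.

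\emph{Step 1: the identity.} Apply \eqref{v12:eq:hd-pasting} with $f=h_K$. This gives
\[
P_{h_K}(G)=P_{h_K}(E;U)+\mathsf{A}_S\,h_K(-n).
\]
We recorded above that $h_K(-n)=-\lambda\,\tension(-n)$, so the flat term equals $-\lambda\,\mathsf{A}_S\,\tension(-n)$. On the other hand, $T_E=\chi_\Gamma$ almost everywhere, so by definition of $\Delta_\tension(\Gamma)$ the wall term in \eqref{v12:eq:energy} reads
\[
-\lambda\int_\Gamma\tension(-\nu_\C)\,d\calH^{d-1}=-\lambda\,\mathsf{A}_S\,\tension(-n)-\lambda\,\Delta_\tension(\Gamma).
\]
The free-surface term splits as $\PerAni(E;U)=P_{h_K}(E;U)+R_K(E)$. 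Adding the three displays yields \eqref{rev:eq:filling-identity}. All integrals are finite by Lemma~\ref{rev:lem:coercivity}.

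\emph{Step 2: nonnegativity of $R_K$.} This holds because $K\subset W$ implies $h_K\le h_W=\tension$ pointwise.

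\emph{Step 3: the bound.} It remains to prove
\[
P_{h_K}(G)\ge d|K|^{1/d}|G|^{(d-1)/d}
\]
and then use $|G|=|E|+|D|$ from \eqref{v12:eq:hd-volume}. This is the anisotropic Wulff inequality for the convex body $K$, which has positive volume under \eqref{v12:eq:hd-partial}. The delicate point is that $h_K$ may fail to be positive: for $\lambda\le0$ one has $0\in K$, but $0$ need not be interior. I would therefore prove the inequality by a route that never uses positivity, namely the Brunn--Minkowski/first-variation argument. For $G$ of finite perimeter and finite measure,
\[
P_{h_K}(G)\ge \liminf_{t\downarrow0}\frac{|G+tK|-|G|}{t},
\]
which one first verifies for smooth or polyhedral $G$ and then extends by approximation. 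Brunn--Minkowski gives $|G+tK|^{1/d}\ge|G|^{1/d}+t|K|^{1/d}$, and differentiating at $t=0$ gives the claim.

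Alternatively, pick $x_0\in\operatorname{int}K$ and translate, $K'=K-x_0$. Then $h_{K'}(\nu)=h_K(\nu)-x_0\cdot\nu$ is a genuine positive gauge, and $\int_{\partial^*G}x_0\cdot\nu_G\,d\calH^{d-1}=0$ by Gauss--Green, since $G$ has finite perimeter and finite volume. Hence $P_{h_K}(G)=P_{h_{K'}}(G)$, and the standard Wulff inequality (Fonseca--M\"uller, or the ABP proof of~\cite{cabre2016sharp}) for the Wulff body $K'$ applies directly.

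I expect the translation trick to be the cleanest route, and the only real obstacle is justifying the zero-flux identity for possibly unbounded $G$. For this I would cut $G$ with balls $B_R$ and argue as in the proof of Lemma~\ref{rev:lem:coercivity}: finite volume forces the slice contributions $\calH^{d-1}(G\cap\partial B_R)$ to vanish along a sequence $R\to\infty$.
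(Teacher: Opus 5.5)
Your proposal is correct and follows the paper's proof. The identity is the same bookkeeping via \eqref{v12:eq:hd-pasting} with $f=h_K$ and $h_K(-n)=-\lambda\,\tension(-n)$. The bound uses the same translation of $K$ by an interior point together with the zero-flux Gauss--Green identity for finite-volume, finite-perimeter $G$; your ball-slicing justification works as well as the paper's cutoff argument.

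The Brunn--Minkowski aside is not needed. If you kept it, you would have to extend the Wulff inequality itself from smooth sets, using Reshetnyak continuity. Extending the Minkowski-content inequality is not an option: that inequality depends on the representative of $G$ and is not stable under $L^1$ approximation.
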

	
	\begin{proof}[Proof of Proposition~\ref{rev:prop:identity}]
		Equation \eqref{v12:eq:hd-pasting} gives
		$P_{h_K}(G)=P_{h_K}(E;U)-\lambda\,\mathsf{A}_S\,\tension(-n)$.
		Subtracting this identity from the capillary energy proves~\eqref{rev:eq:filling-identity}.
		
		Translation removes any negative support values without changing the full-space perimeter, since the added linear term has zero boundary integral. Choose $z_0\in\operatorname{int}K$. The support function of $K-z_0$ is positive on the sphere and equals $h_K(\xi)-z_0\cdot\xi$. The Gauss--Green theorem gives $\int_{\partial^*G}\nu_G\,d\calH^{d-1}=0$, also for unbounded $G$ of finite volume and perimeter: apply compactly supported cutoffs to a constant vector field and let their radii tend to infinity. Consequently $P_{h_{K-z_0}}(G)=P_{h_K}(G)$. The finite-perimeter Wulff inequality gives
		\begin{equation}\label{v12:eq:hd-wulff}
			P_{h_K}(G)\geq d|K|^{1/d}|G|^{(d-1)/d}.
		\end{equation}
		Equality in~\eqref{v12:eq:hd-wulff} holds precisely when $G$ is a translate and positive dilation of $K$, modulo null sets, by Taylor~\cite{taylor1978crystalline} and Fonseca--M\"uller~\cite{fonseca1991uniqueness}. For the ABP proof of the inequality, see Cabr\'e--Ros-Oton--Serra~\cite[Theorem~1.2]{cabre2016sharp}. Here the identity $P_{h_K}(K)=d|K|$ follows from the Gauss--Green theorem and the support equality $p\cdot\nu_K=h_K(\nu_K)$ almost everywhere. This proves~\eqref{rev:eq:filling-bound}.
	\end{proof}
	
	In the two regimes below, both boundary remainders are nonnegative and the added volume improves the bound. Equality in the volume-only comparison therefore forces zero filling volume and no exposed truncating facet.
	
	\begin{theorem}[Capillary comparison for a hyperplane filling]\label{rev:thm:filling}
		Let $d\geq2$ and use one of the three fillings above. Suppose either $-1<\lambda\leq0$, or $0<\lambda<\tension(n)/\tension(-n)$ and $\Delta_\tension(\Gamma)=0$. With $K=K_{\tension,\lambda}(n)$,
		\begin{equation}\label{v12:eq:hd-comparison}
			\EnergyC(E)\geq d|K|^{1/d}(|E|+|D|)^{(d-1)/d}
			\geq d|K|^{1/d}|E|^{(d-1)/d}.
		\end{equation}
		Equality in the final, volume-only bound holds precisely when, modulo a null set, $E=x_0+tK$ for some $x_0\in\R^d$ and $t>0$, with its entire truncating facet $x_0+tF_{\tension,\lambda}(n)$ in a supporting face of $\C$ of outer normal $n$. In particular,
		\[
		\C\subset\overline{\H_{x_0\cdot n+t\lambda\,\tension(-n)}^-(n)}.
		\]
		The directional constant is optimal uniformly over this class of convex obstacles at every prescribed positive volume. Equality occurs only in geometry~2; geometries~1 and~3 give strict volume-only comparison.
		
		If also $-1<\lambda<\lambda_*(\tension)$, then
		\[
		\EnergyC(E)\geq d(m^{\min}_{\tension,\lambda})^{1/d}|E|^{(d-1)/d},
		\]
		with equality precisely in the preceding configurations in a minimizing direction. Recall that $m^{\min}_{\tension,\lambda}>0$ in this range and $\lambda_*(\tension)=1$ when $\tension$ is even.
	\end{theorem}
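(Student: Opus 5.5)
The plan is to read everything off the exact filling identity of Proposition~\ref{rev:prop:identity}. Under either hypothesis the term $-\lambda\,\Delta_\tension(\Gamma)$ is nonnegative: for $\lambda\le0$ because $\Delta_\tension(\Gamma)\ge0$ by Lemma~\ref{rev:lem:filling}, and in the positive range because it vanishes by assumption. Together with $R_K(E)\ge0$ and~\eqref{rev:eq:filling-bound}, this gives the first inequality in~\eqref{v12:eq:hd-comparison}. The second inequality is monotonicity in $|D|\ge0$.

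For the equality case in the volume-only bound, every inequality in the chain must be an equality. First, $|D|=0$, which excludes geometries~1 and~3, since there $D$ is a nonempty open convex set. We are therefore in geometry~2 with $D=\emptyset$ and $G=E$. Second, equality in the Wulff inequality~\eqref{v12:eq:hd-wulff} holds, so by Taylor and Fonseca--M\"uller $E=x_0+tK$ modulo null sets. Third, $R_K(E)=0$. Because $\tension-h_K>0$ precisely on the normals of the truncating facet, $R_K(E)=0$ means the facet normal $-n$ carries no $\calH^{d-1}$-mass on $\partial^*E\cap U$. Hence the whole facet $x_0+tF_{\tension,\lambda}(n)$ lies in $\partial\C$ up to a null set. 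In geometry~2 the wetted set lies in the supporting face with normal $n$, which yields the stated half-space containment. The facet has positive measure because the cutting hyperplane meets $\operatorname{int}W$ under~\eqref{v12:eq:hd-partial}.

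Conversely, for such a configuration the energy is computed directly. The free boundary carries only normals where $h_K=\tension$, and the wall term equals $-\lambda\,t^{d-1}\calH^{d-1}(F)\,\tension(-n)$, which recombines to $P_{h_K}(tK)=d|K|^{1/d}|tK|^{(d-1)/d}$. The contact excess vanishes since $\nu_\C\equiv n$ on the face.

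Optimality uniformly over obstacles at every prescribed volume follows from the equality configuration. One takes $\C$ to be a half-space with outer normal $n$ and a suitable dilate placed on it. More generally, any convex obstacle possessing a face large enough to contain the facet works, and such obstacles exist for every volume.

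For the direction-independent statement, in the range $-1<\lambda<\lambda_*$ every direction satisfies~\eqref{v12:eq:hd-partial}, so we may apply the directional bound with the filling's own $n$ and use $|K_{\tension,\lambda}(n)|\ge m^{\min}_{\tension,\lambda}$. Equality then forces both equality in the directional bound and $m_{\tension,\lambda}(n)=m^{\min}$. For the positive-$\lambda$ branch, the hypothesis $\Delta_\tension=0$ is understood as stated.

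The main obstacle is the rigidity step $R_K(E)=0\Rightarrow$ the entire facet is wetted. This needs care when $-n$ lies in the normal cone of a lower-dimensional face of $W$, and the crystalline case must also be handled. I would argue that on $\partial(x_0+tK)$ the set of points with $h_K(\nu)<\tension(\nu)$ is exactly the relative interior of the truncating facet. This holds because $h_K(\nu)=\tension(\nu)$ whenever the maximizer of $p\cdot\nu$ over $W$ lies in $\overline{\H^+}$, and for the outer normals of $K$ away from the facet such a maximizer lies in $K$. That facet must therefore lie in $\partial^*E\cap\partial\C$ rather than in $U$.
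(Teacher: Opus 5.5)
Your proposal is correct and follows the paper's proof essentially step by step. You use nonnegativity of $R_K(E)$ and $-\lambda\,\Delta_\tension(\Gamma)$ in the filling bound, then $|D|=0$ to force geometry~2, then Wulff rigidity for $G=E$, then $R_K(E)=0$ to force the facet to be wetted, and finish with the Gauss--Green converse and a large supporting face for optimality. The facet step rests on your last paragraph's correct observation that on $\partial(x_0+tK)$ only the facet has $h_K(\nu_K)<\tension(\nu_K)$; your earlier phrasing ``precisely on the normals of the truncating facet'' is false as a statement about all directions, e.g.\ for the disk. The only real variation is that you read the half-space containment directly off the geometry-2 hypothesis that the wetted set lies in a supporting face with outer normal $n$, whereas the paper derives it intrinsically from convexity of $\C$ and disjointness. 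As the paper does, you should still exclude the facet from $\operatorname{int}\C$ (points of $\partial^*E$ there would give $|E\cap\C|>0$), and use closedness of $\C$ to pass from almost all of the facet to the entire facet.
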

	
	\begin{proof}[Proof of Theorem~\ref{rev:thm:filling}]
		Both $R_K(E)$ and $-\lambda\,\Delta_\tension(\Gamma)$ in~\eqref{rev:eq:filling-bound} are nonnegative under either alternative. Equality in the volume-only bound forces $|D|=0$ by strict monotonicity of $v^{(d-1)/d}$, hence geometry~2. Equality in the Wulff inequality gives $E=x_0+tK$ almost everywhere, and $R_K(E)=0$.
		
		The relative interior of its truncating facet cannot meet $\operatorname{int}\C$, since this would force $|E\cap\C|>0$. A free part of positive $\calH^{d-1}$ measure would contribute $\tension(-n)-h_K(-n)=(1+\lambda)\tension(-n)>0$ to $R_K(E)$. Thus the facet is wetted almost everywhere and, by closedness of $\C$, lies entirely in $\partial\C$. If $\C$ had a point strictly on the cap side of its hyperplane, convexity would force its interior into the cap near a relative interior point of the facet, again contradicting disjointness. This proves the stated supporting half-space, without strict convexity of $W$.
		
		Conversely, on the free boundary of $K$ the support equality is $p\cdot\nu_K=\tension(\nu_K)$, while on its facet it is $p\cdot(-n)=-\lambda\,\tension(-n)$. Gauss--Green applied to $p\mapsto p$ therefore gives
		\[
		\PerAni(K;\H_{\lambda\,\tension(-n)}^+(n))
		-\lambda\,\tension(-n)\calH^{d-1}(F_{\tension,\lambda}(n))=d|K|.
		\]
		Dilation proves equality for $x_0+tK$. A sufficiently large supporting face of a compact convex prism realizes it at any volume. Finally, directional cap volume is continuous by dominated convergence off the cutting hyperplane. Compactness of the sphere gives a positive minimum in the uniform partial-wetting range, and equality in the minimized bound requires a minimizing direction.
	\end{proof}
	
	A translated rectangle already distinguishes opposite supporting directions.
	\begin{remark}[Directional cap volumes for a nonsymmetric anisotropy]
		For $a,b>0$ and $0<t<a$, the body $W=[t-a,t+a]\times[-b,b]$ has support function $\tension(\xi)=a|\xi_1|+b|\xi_2|+t\xi_1$. At zero adhesion,
		\[
		m_{\tension,0}(e_1)=2b(a+t),\qquad
		m_{\tension,0}(-e_1)=2b(a-t).
		\]
		Thus opposite directions can have different cap volumes even at
		$\lambda=0$, making minimization over directions essential.
		
		Translation preserves the rectangle's shape but changes the areas on either side of the central cut, information invisible to full-space perimeter.
	\end{remark}
	
	\subsection{Planar completion and sharp comparison}
	We first explain how planar contact geometry relates to the filling hypotheses, before removing the restriction on the wetted trace.
	\begin{remark}[Contact geometry and planar completion]
		The filling hypotheses are geometric restrictions on the wetted trace. In dimensions \(d\geq 3\), a connected wetted patch need not have its rim in a hyperplane, so connectedness alone does not imply the required filling. Conversely, geometry~2 permits disconnected traces in a single supporting face, and geometry~3 can also produce disconnected traces in higher dimensions.
		
		In the plane, every connected wetted trace admits one of the three fillings, as shown next. For nonpositive adhesion, the connectedness restriction can subsequently be removed by filling bounded complementary components outside the obstacle; see Theorem~\ref{thm:unrestricted-planar}. This completion changes the competitor before the filling comparison is applied.
	\end{remark}
	
	For a proper planar wetted arc, the line through its endpoints determines the filling: it either cuts out an obstacle cap or supports a flat face. Complete contact is handled by filling the whole obstacle. This gives the following comparison for connected contact.
	\begin{corollary}[Connected planar contact]\label{cor:connected-planar}
		Let $d=2$, $-1<\lambda\leq 0$, and let $E\subset\R^2\setminus\C$ have finite perimeter and $0<|E|<\infty$.
		
		Suppose that the exterior BV trace admits a connected representative, in the sense that there exists a connected Borel set $\Gamma\subset\partial\C$ such that
		\[
		T_E=\chi_\Gamma,
		\quad \calH^1\text{-almost everywhere
			on }\partial\C.
		\]
		Then
		\[
		\EnergyC(E)
		\geq
		2\sqrt{m_{\tension,\lambda}^{\min}|E|}.
		\]
		Equality holds precisely in the configurations characterized by Theorem~\ref{rev:thm:filling}, with a minimizing direction.
	\end{corollary}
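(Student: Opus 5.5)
The plan is to reduce the corollary to Theorem~\ref{rev:thm:filling} by showing that a connected wetted trace $\Gamma\subset\partial\C$ always fits one of the three filling geometries. Once a filling is found in some direction $n$, Theorem~\ref{rev:thm:filling} with $-1<\lambda\leq0$ gives
\[
\EnergyC(E)\geq 2|K_{\tension,\lambda}(n)|^{1/2}|E|^{1/2}\geq 2\sqrt{m^{\min}_{\tension,\lambda}|E|}.
\]
For even tensions $\lambda_*=1$; in general $\lambda\leq0<\lambda_*(\tension)$, so every direction is admissible. The equality statement transfers directly: equality forces equality in both steps, hence a minimizing direction together with the flat-contact configuration of Theorem~\ref{rev:thm:filling}.

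The work is therefore planar convex geometry on the connected Borel set $\Gamma$, taken modulo $\calH^1$-null sets. I would split into cases.

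\textbf{Case $\calH^1(\Gamma)=0$.} Then $T_E=0$. This falls outside geometry~2, which requires positive measure, so it needs separate treatment. Here $\EnergyC(E)=\PerAni(E;\R^2)$ after zero extension (Lemma~\ref{rev:lem:coercivity}). The Wulff inequality gives $2\sqrt{|W||E|}>2\sqrt{m^{\min}|E|}$, since every cap is a proper subset of $W$ with smaller area. The inequality is strict, consistent with the equality characterization.

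\textbf{Case $\Gamma$ is all of $\partial\C$ (up to null sets) with $\C$ compact.} Pick any supporting line $\Pi_s(n)$ with $\C\subset\overline{\H_s^+(n)}$, and let $S$ be the face $\C\cap\Pi_s(n)$: a segment or a point, whose relative boundary is null. This is geometry~3. Strictly, geometry~3 takes $\Gamma=\overline{\partial\C\setminus S}$, whereas here $T_E=1$ also on $S$. I would reconcile this by choosing $n$ so that the face is a single point, which happens for all but countably many directions; then $S$ is null and everything matches.

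\textbf{Case $\Gamma$ is a proper arc.} A connected proper subset of the boundary curve $\partial\C$, which is a Jordan curve or a properly embedded line, is an arc. Since $E$ has finite perimeter, the trace inequality makes $\calH^1(\Gamma)<\infty$, so $\Gamma$ is bounded with endpoints $a\neq b$, or else it is degenerate. Let $L$ be the line through $a,b$. By convexity, the closed arc $\Gamma$ lies on one side of $L$.
\begin{enumerate}[label=(\roman*)]
\item If $L$ meets $\operatorname{int}\C$, then $\Gamma$ is exactly $\partial\C\cap \H_s^+(n)$, where $n$ is the normal pointing toward $\Gamma$. Indeed, $\partial\C\cap\H_s^+(n)$ is one of the two arcs cut off by $L$, and convexity identifies it with the connected arc from $a$ to $b$. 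The set $D=\C\cap\H^+_s(n)$ is a bounded convex cap, bounded because it is contained in the convex hull of the bounded arc and the chord. This is geometry~1.
\item Otherwise $L$ is a supporting line and $[a,b]\subset\partial\C$. Then $\Gamma$, being the arc joining $a$ and $b$, must be the segment itself and not the complementary arc. If it were the complementary arc, it would contain $\partial\C\setminus(a,b)$, which is unbounded when $\C$ is unbounded and otherwise is the remaining boundary, a case to treat as in the previous paragraph with $S$ the segment. With $\Gamma$ the segment in a flat face, this is geometry~2.
\end{enumerate}

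The main obstacle I expect is the bookkeeping in this last case analysis. The delicate points are: which of the two arcs determined by $a,b$ equals $\Gamma$; unbounded obstacles, where one arc is unbounded and hence excluded by finiteness of $\calH^1(\Gamma)$; and the complementary-arc situation on compact $\C$. That situation is precisely geometry~3 with $S=[a,b]$, whose endpoints are null. Handling it this way also resolves the full-contact case, since any supporting face serves as $S$ up to null sets. Equality configurations arise only from geometry~2, as Theorem~\ref{rev:thm:filling} already asserts.
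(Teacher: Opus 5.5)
Your proposal is correct and follows the paper's proof essentially step for step. The case split is the same: zero length via the full-space Wulff inequality; full contact via geometry~3; a proper arc via geometry~1 or~2 according to whether the chord line meets $\operatorname{int}\C$; the complementary arc on compact $\C$ via geometry~3 with $S=[a,b]$; and finally $|K_{\tension,\lambda}(n)|\geq m^{\min}_{\tension,\lambda}$. The differences are minor. For full contact the paper simply takes $S=\emptyset$ instead of a point face. For boundedness of the cap in geometry~1 it shows that the interior of the Jordan curve formed by the arc and the chord equals $\operatorname{int}\C\cap\H_s^+(n)$. That argument is more careful than your ``two arcs cut off by $L$'' remark when $\C$ is unbounded, since then $\partial\C\setminus\{a,b\}$ has three components.
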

	
	\begin{proof}[Proof of Corollary~\ref{cor:connected-planar}]
		Fix the connected representative $\Gamma$. Lemma~\ref{rev:lem:coercivity} gives $\calH^1(\Gamma)<\infty$. The boundary of a compact planar convex body is a rectifiable Jordan curve; the components of the boundary of an unbounded convex obstacle are proper curves homeomorphic to a line. Thus a connected subset of finite length is bounded. If its length is zero, the full-space Wulff inequality is strictly stronger than the cap bound. If $\C$ is compact and the trace is one almost everywhere on $\partial\C$, use geometry~3 with $S=\emptyset$.
		
		Otherwise, up to endpoints, $\Gamma$ is a bounded proper arc with distinct endpoints $a,b$. Let $\ell$ join them. If $\ell$ meets $\operatorname{int}\C$, its only boundary intersection points are $a,b$. The arc lies on one side of $\ell$ and bounds a convex cap with $[a,b]$. For boundedness, the closure of the arc together with $[a,b]$ is a Jordan curve. Let $B$ be its bounded interior and choose $n$ toward the side containing the arc. The curve lies in $\C$; convexity therefore implies $B\subset\C\cap\H_s^+(n)$. Put $O=\operatorname{int}\C\cap\H_s^+(n)$. This set is convex and connected, and $B$ is a nonempty open subset of $O$. Since $\partial B$ consists of the arc in $\partial\C$ and the chord in $\Pi_s(n)$, one has $\partial B\cap O=\emptyset$. Hence $B$ is also relatively closed in $O$, so $B=O$. Consequently the convex cap is $\overline B$, hence bounded. This is geometry~1.
		
		Otherwise $\ell$ supports $\C$ and $[a,b]\subset\partial\C$. The wetted arc is either this segment (geometry~2), or the alternative bounded boundary arc, possible only for compact $\C$ (geometry~3 with $S=[a,b]$). Theorem~\ref{rev:thm:filling} and $|K_{\tension,\lambda}(n)|\geq m^{\min}_{\tension,\lambda}$ prove the claim and identify equality.
	\end{proof}
	
	We now remove the connectedness assumption in Corollary~\ref{cor:connected-planar}. The argument uses two enlargements: planar completion adds exterior pockets, whereas the hyperplane filling adds a portion of $\C$ itself. Geometrically, the pockets are bounded complementary regions enclosed by the free boundary of $E$ and possibly part of $\partial\C$; they need not touch the obstacle. We first estimate the energy change for such a region under the boundary and trace conditions stated below, then construct the completion for a bounded obstacle and an indecomposable competitor. Componentwise summation and localization yield Theorem~\ref{thm:unrestricted-planar}. The reflected surface tension appears because
	the normal to the removed interface is reversed when viewed from the pocket.
	
	Adding an exterior pocket removes a free interface and may create wall contact. When $\lambda<0$, the trace inequality bounds the cost of that new contact by the removed interface.
	\begin{lemma}[Filling an exterior pocket]\label{lem:exterior-pocket}
		Let \(E,H\subset U\) be disjoint finite-perimeter sets of finite volume. Suppose that
		\[
		\partial^*H\cap U\subset\partial^*E\cap U,
		\qquad \nu_H=-\nu_E \quad \calH^{d-1}\text{-a.e. on }\partial^*H\cap U,
		\]
		where the inclusion is understood modulo \(\calH^{d-1}\)-null sets, and assume that \(T_E\,T_H=0\) almost everywhere on \(\partial\C\). Then
		\begin{equation} \label{eq:pocket-energy-identity}
			\EnergyC(E)-\EnergyC(E\cup H)
			= 
			\P_{\widecheck\tension}(H;U) +\lambda\,I_\tension(H;\C).
		\end{equation}
		In particular, if \(\lambda>-1\), then
		\begin{equation}\label{eq:pocket-energy-gain}
			\EnergyC(E)-\EnergyC(E\cup H)
			\geq 
			\min\{1,1+\lambda\}\P_{\widecheck\tension}(H;U).
		\end{equation}
	\end{lemma}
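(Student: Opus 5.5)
The plan is to compute both energies by decomposing the reduced boundary of $E\cup H$ inside $U$ and the exterior trace of $E\cup H$ on $\partial\C$, and then subtract. By Lemma~\ref{rev:lem:coercivity}, the zero extensions of $\chi_E$ and $\chi_H$ have finite total perimeter, so all quantities involved are finite. The hypotheses also make the traces well behaved. Since $E$ and $H$ are disjoint, $\chi_{E\cup H}=\chi_E+\chi_H$, and linearity of the exterior trace gives
\[
T_{E\cup H}=T_E+T_H .
\]
Because $T_ET_H=0$ and both traces take values in $\{0,1\}$ almost everywhere, this sum is again a characteristic function.

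Next we treat the perimeter term. In $U$ we have
\[
D\chi_{E\cup H}\mres U=D\chi_E\mres U+D\chi_H\mres U .
\]
Write $D\chi_F=-\nu_F\calH^{d-1}\mres\partial^*F$. On $\partial^*H\cap U$, which lies inside $\partial^*E\cap U$, the normals are opposite, so the two contributions cancel exactly there. What remains is $-\nu_E\calH^{d-1}\mres\bigl((\partial^*E\setminus\partial^*H)\cap U\bigr)$. Therefore $\partial^*(E\cup H)\cap U=(\partial^*E\setminus\partial^*H)\cap U$ up to null sets, with normal $\nu_E$. This yields
\[
\PerAni(E\cup H;U)=\PerAni(E;U)-\int_{\partial^*H\cap U}\tension(\nu_E)\,d\calH^{d-1}
=\PerAni(E;U)-\P_{\widecheck\tension}(H;U),
\]
since $\tension(\nu_E)=\tension(-\nu_H)=\widecheck\tension(\nu_H)$ on that set. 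For the wall term, linearity of the trace gives $I_\tension(E\cup H;\C)=I_\tension(E;\C)+I_\tension(H;\C)$. Subtracting the two energies then gives \eqref{eq:pocket-energy-identity} directly.

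For \eqref{eq:pocket-energy-gain} we split according to the sign of $\lambda$. If $\lambda\ge0$, the term $\lambda I_\tension(H;\C)$ is nonnegative, and the gain is at least $\P_{\widecheck\tension}(H;U)$. If $-1<\lambda<0$, we apply the first trace inequality of Lemma~\ref{rev:lem:coercivity} to $H$. This is legitimate because $H$ has finite volume and finite perimeter, and if $|H|=0$ the claim is trivial. It gives $I_\tension(H;\C)\le\P_{\widecheck\tension}(H;U)$, so $\lambda I_\tension(H;\C)\ge\lambda\P_{\widecheck\tension}(H;U)$ and the gain is at least $(1+\lambda)\P_{\widecheck\tension}(H;U)$. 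Taking the minimum over the two cases gives the stated constant $\min\{1,1+\lambda\}$.

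The main point needing care is the measure-theoretic cancellation. The inclusion and the normal reversal hold only modulo $\calH^{d-1}$-null sets, and one must check that no new interface appears in $U$ and that no part of $\partial^*E$ outside $\partial^*H$ is lost. Both follow from the mutual singularity and additivity of the Gauss--Green measures written above, together with the characterization of $|D\chi_F|$ as $\calH^{d-1}\mres\partial^*F$ from De Giorgi's structure theorem.
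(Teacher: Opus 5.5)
Your proposal is correct and follows the paper's proof. On the common interface $\partial^*H\cap U$, where $\nu_H=-\nu_E$, the Gauss--Green measures cancel, giving $\PerAni(E;U)-\PerAni(E\cup H;U)=\P_{\widecheck\tension}(H;U)$; linearity of the exterior trace gives additivity of $I_\tension$; and the trace inequality of Lemma~\ref{rev:lem:coercivity}, applied to $H$, yields the factor $\min\{1,1+\lambda\}$, with your explicit measure computation only spelling out what the paper states in one line.
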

	
	\begin{proof}[Proof of Lemma~\ref{lem:exterior-pocket}]
		The interface $\partial^*H\cap U$ disappears on adjoining $H$, while the other reduced-boundary portions of $E$ remain unchanged. Since $\nu_E=-\nu_H$ on that interface,
		$\PerAni(E;U)-\PerAni(E\cup H;U)=P_{\widecheck\tension}(H;U)$.
		The disjoint traces satisfy $T_{E\cup H}=T_E+T_H$, so $I_\tension(E\cup H;\C)=I_\tension(E;\C)+I_\tension(H;\C)$. This proves~\eqref{eq:pocket-energy-identity}. Lemma~\ref{rev:lem:coercivity} gives $0\leq I_\tension(H;\C)\leq P_{\widecheck\tension}(H;U)$. Multiplication by $\lambda\leq0$ yields the factor $1+\lambda$; for $\lambda\geq0$ the adhesion contribution is nonnegative.
	\end{proof}
	
	For bounded planar obstacles, exterior completion connects the nonzero wetted trace of an indecomposable competitor. Indeed, disconnected completed contact would permit a cut inside the obstacle, splitting the competitor into two positive-volume parts with additive perimeters, contrary to indecomposability.
	\begin{lemma}[Planar completion]\label{lem:planar-completion}
		Let \(\C\subset\R^2\) be a convex body, and let \(E\subset\R^2\setminus\C\) be an indecomposable finite-perimeter set with
		\[
		0<|E|<\infty,\qquad I_\tension(E;\C)>0.
		\]
		There exists a bounded finite-perimeter set \(E^+\supset E\), contained in \(\R^2\setminus\C\), whose wetted trace has a connected representative.
		
		More precisely, let \(H_j\) be the exterior pockets, namely the finite-measure indecomposable components of \(\R^2\setminus(E\cup\C)\). Then
		\[
		E^+=E\cup\bigcup^{\infty}_{j=1} H_j \quad\text{modulo null sets}.
		\]
		For every \(\lambda>-1\),
		\begin{equation}\label{eq:completion-energy-gain}
			\EnergyC(E) 
			\geq 
			\EnergyC(E^+)+\min\{1,1+\lambda\}\sum^{\infty}_{j=1}\P_{\widecheck\tension}(H_j;U).
		\end{equation}
	\end{lemma}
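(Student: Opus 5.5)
We prove three things in turn: the energy inequality \eqref{eq:completion-energy-gain}, that $E^+$ is bounded, and that its wetted trace has a connected representative.

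\textbf{Structure of $E^+$.} Put $F=\R^2\setminus(E\cup\C)$. Since $E$ has finite total perimeter (Lemma~\ref{rev:lem:coercivity}) and $\C$ is a convex body, $F$ has locally finite perimeter. We use the planar decomposition theory for sets of finite perimeter, namely indecomposable components and the Jordan-curve description of Ambrosio--Caselles--Masnou--Morel. By it, $F$ splits into countably many indecomposable components. Exactly one of them has infinite measure, since $E\cup\C$ is bounded modulo null sets; boundedness of $E$ itself follows from indecomposability of $E$ in the plane, because its essential boundary is a rectifiable Jordan system of finite length. The remaining components are the pockets $H_j$. Each $H_j$ is disjoint from $E$. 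Its reduced boundary inside $U$ lies in $\partial^*E$ with opposite normals, because $\partial^*H_j\cap U\subset\partial^*F\cap U=\partial^*E\cap U$ modulo null sets and $\nu_F=-\nu_E$ there. Its trace on $\partial\C$ is complementary to that of $E$, since $T_E+T_F\le1$ and both are characteristic functions a.e.

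\textbf{Energy inequality.} Apply Lemma~\ref{lem:exterior-pocket} successively to $E_N=E\cup H_1\cup\dots\cup H_N$ and $H=H_{N+1}$. The hypotheses persist because the pockets are pairwise disjoint and their free boundaries lie in $\partial^*E$. Summing gives \eqref{eq:completion-energy-gain} for the finite unions $E_N$. We then pass $N\to\infty$. The sets $E_N$ converge to $E^+$ in $L^1$, since $\sum|H_j|<\infty$ as all pockets lie in a bounded region. Lower semicontinuity does not directly give the trace term, so instead we use the exact identity \eqref{eq:pocket-energy-identity}. The total perimeter of $E_N$ equals $\P_\tension(E)$ minus the removed interfaces, plus wall pieces bounded by $\sum_j I(H_j)\le\sum_j\P_{\widecheck\tension}(H_j;U)\le C\,\P(E)$. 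Hence the energies of $E_N$ converge to that of $E^+$ by dominated convergence of the additive trace and interface terms. The resulting $E^+$ is bounded, has finite perimeter, and satisfies $\R^2\setminus(E^+\cup\C)=F_\infty$, the unbounded component, which is indecomposable.

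\textbf{Connectedness of the wetted trace.} This is the main obstacle. We would argue by contradiction, as sketched before the statement. Suppose the wetted set $\Gamma^+$ of $E^+$ has no connected representative. Then there are two points $a,b\in\partial\C$ in the dry set, lying in distinct gaps between essential wetted pieces, and both boundary arcs from $a$ to $b$ carry positive wetted measure. Density points are chosen so that a neighbourhood of each point on $\partial\C$ is dry, meaning it borders $F_\infty$.

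Join $a$ to $b$ by the chord $[a,b]\subset\C$. The chord splits $\R^2\setminus\C$ into the regions facing the two arcs. Since $F_\infty$ is indecomposable, unbounded and touches $\partial\C$ near both $a$ and $b$, the planar Jordan description lets us find a rectifiable path $\gamma$ in $F_\infty$ joining small neighbourhoods of $a$ and $b$, using its measure-theoretic interior. The closed curve $\gamma\cup[a,b]$ (with short connectors inside $\C$) is a Jordan curve whose interior and exterior each contain a positive-measure part of $E^+$, namely the parts adjacent to the two wetted pieces. Because the curve lies in $\C\cup F_\infty$ up to $\calH^1$-null sets, this splits $E^+$ into two positive-volume pieces with additive perimeter.

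Indecomposability passes from $E$ to $E^+$. A splitting of $E^+$ induces one of $E$ unless one side consists only of pockets, but pockets border $E$ along sets of positive length, so the cut would pass through $\partial^*E$ additively. Hence the splitting contradicts indecomposability. The delicate points are choosing $\gamma$ with only $\calH^1$-null intersection with $\partial^*E^+$, and excluding the degenerate case $\Gamma^+=\partial\C$, which is itself connected. If the general path construction proves awkward, we would first approximate $F_\infty$ by its measure-theoretic interior, which is open and connected in the plane for indecomposable sets with Jordan boundary, and then pick $\gamma$ there.
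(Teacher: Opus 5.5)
Your energy accounting (successive use of Lemma~\ref{lem:exterior-pocket} on the finite-measure components of $\R^2\setminus(E\cup\C)$, then a BV limit) is fine. So is the boundedness of $E^+$. Both agree in substance with the paper.

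The gap is in the connectedness of the completed trace, which is the real content of the lemma. Your separating curve $\gamma\cup[a,b]$ needs two things you assert but do not prove:
\begin{enumerate}
\item dry points $a,b\in\partial\C$ that $F_\infty$ actually reaches with no $E^+$ nearby, so that the connectors from $\gamma$ to the chord avoid $E^+$;
\item a path $\gamma$ joining them along which both traces of $\chi_{E^+}$ vanish.
\end{enumerate}
Neither follows from trace information or from indecomposability of $F_\infty$:
\begin{itemize}
\item A vanishing exterior trace on an arc of $\partial\C$ records only densities at $\partial\C$. It does not keep $E^+$ out of a neighbourhood of $a$ in $U$.
\item A density point of the dry set need not have a dry neighbourhood at all.
\item An indecomposable set need not have an open path-connected representative. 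Its points of density one need not form an open set, and its open kernel can be disconnected.
\item The chord $[a,b]\subset\C$ does not split $\R^2\setminus\C$; all the separation must come from $\gamma$.
\end{itemize}
Your fallback presupposes a Jordan boundary for $F_\infty$, and that is precisely the missing step. What you need is that $E^+\cup\C$ is, up to null sets, the interior of a single rectifiable Jordan curve. This is where the hypothesis $I_\tension(E;\C)>0$ enters, and your argument never invokes it.

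The paper establishes this structure first, and the argument then runs as follows.
\begin{itemize}
\item The positive common reduced boundary of $E$ and $\C$ makes $\mathcal S=E\cup\C$ indecomposable.
\item Its saturation $G=\mathcal S\cup\bigcup_j H_j$ is simple. By Ambrosio--Caselles--Masnou--Morel it is the interior of a rectifiable Jordan curve $\mathscr J$, with $\mathscr J=\partial^*G$ up to $\calH^1$-null sets.
\item Hence $\mathscr J$ cannot meet $\operatorname{int}\C$, so $\operatorname{int}\C\subset G$. Up to $\calH^1$-null sets, the dry part of $\partial\C$ is exactly $\mathscr J\cap\partial\C$.
\item If $\mathscr J\setminus\C$ had two components, take $a,b\in\mathscr J\cap\partial\C$ separating them. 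A polygonal crosscut $\sigma$ through $\operatorname{int}\C$ from $a$ to $b$ cuts $G$ into two Jordan subdomains, each containing positive measure of $E$.
\item Since $\chi_E=0$ near $\sigma\setminus\{a,b\}$, and the endpoint balls can be removed as $\rho\to0^+$, the perimeter of $E$ splits additively. This contradicts indecomposability of $E$ directly, without going through $E^+$.
\end{itemize}
No exterior path is needed, because the Jordan boundary itself passes through $a$ and $b$. Once this representation is in hand your construction could also be completed, since points of $\mathscr J$ are accessible from its open exterior. At that point, however, it is redundant.
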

	See Figure~\ref{fig:planar-completion} for the completion of $E$ and the crosscut configuration excluded in the proof.
	
	\begin{figure}[!htbp]
		\centering
		\scalebox{\CapFigureScale}{%
			\begin{minipage}{\textwidth}
				\centering
				\begin{tikzpicture}[cap panel]
					\CapPanel{a}
					\begin{scope}[shift={(3.35,2.30)},scale=1.30]
						\fill[white] (0,0) circle (1);
						\fill[cap obstacle] (0,0) circle (1);
						\draw[cap edge] (0,0) circle (1);
						\path[cap liquid]
						plot[domain=15:165,samples=150]
						({(1+.70*sin(1.2*(\x-15)))*cos(\x)},
						{(1+.70*sin(1.2*(\x-15)))*sin(\x)})
						-- plot[domain=165:15,samples=240]
						({(1+cappocket(\x))*cos(\x)},
						{(1+cappocket(\x))*sin(\x)}) -- cycle;
						\foreach \lo/\hi in {45/75,105/135}{
							\path[fill=white]
							plot[domain=\lo:\hi,samples=75]
							({(1+cappocket(\x))*cos(\x)},
							{(1+cappocket(\x))*sin(\x)})
							-- plot[domain=\hi:\lo,samples=50]
							({cos(\x)},{sin(\x)}) -- cycle;
							\path[cap added]
							plot[domain=\lo:\hi,samples=75]
							({(1+cappocket(\x))*cos(\x)},
							{(1+cappocket(\x))*sin(\x)})
							-- plot[domain=\hi:\lo,samples=50]
							({cos(\x)},{sin(\x)}) -- cycle;
						}
						\draw[cap edge]
						plot[domain=15:165,samples=150]
						({(1+.70*sin(1.2*(\x-15)))*cos(\x)},
						{(1+.70*sin(1.2*(\x-15)))*sin(\x)});
						\draw[cap edge]
						plot[domain=15:165,samples=240]
						({(1+cappocket(\x))*cos(\x)},
						{(1+cappocket(\x))*sin(\x)});
						\node at (0,1.43) {$E$};
						\node[cap label] at (0,-.20) {$\C$};
					\end{scope}
					\draw[cap leader] (5.65,3.95) -- (4.08,3.58);
					\node[anchor=west,cap label] at (5.65,3.95) {$H_1$};
					\draw[cap leader] (1.05,3.95) -- (2.62,3.58);
					\node[anchor=east,cap label] at (1.05,3.95) {$H_2$};
					\node at (3.35,.40) {$E^+=E\cup H_1\cup H_2$};
				\end{tikzpicture}\hfill
				\begin{tikzpicture}[cap panel]
					\CapPanel{b}
					\begin{scope}[shift={(3.65,2.55)},scale=1.65]
						\path[cap liquid] (0,0) ellipse (1.5 and 1);
						\begin{scope}
							\clip (0,-1.05) rectangle (1.55,1.05);
							\path[cap secondary] (0,0) ellipse (1.5 and 1);
						\end{scope}
						\fill[white] (0,0) circle (1);
						\fill[cap obstacle] (0,0) circle (1);
						\draw[cap edge] (0,0) ellipse (1.5 and 1);
						\draw[line width=.6pt] (0,0) circle (1);
						\draw[cap guide,line width=.8pt] (0,-1) -- (0,1);
						\fill (0,1) circle (1.3pt) (0,-1) circle (1.3pt);
						\node[above right] at (0,1) {$a$};
						\node[below right] at (0,-1) {$b$};
						\node[cap label] at (-.40,0) {$\C$};
						\node[cap label] at (.20,.35) {$\sigma$};
						\node at (-1.25,0) {$E_1$};
						\node at (1.25,0) {$E_2$};
					\end{scope}
					\draw[cap leader] (6.35,3.85) -- (5.68,3.50);
					\node[anchor=west,cap label] at (6.35,3.85) {$\mathscr J$};
				\end{tikzpicture}
			\end{minipage}%
		}
		\caption{Planar completion (Lemma~\ref{lem:planar-completion}).
			(a) Adding the amber exterior pockets gives $E^+=E\cup H_1\cup H_2$. (b) In the hypothetical configuration, a crosscut $\sigma$ through $\C$ separates $E$ into the blue and green positive-area pieces without adding perimeter. Indecomposability excludes this configuration. }
		\label{fig:planar-completion}
	\end{figure}
	
	\begin{proof}[Proof of Lemma~\ref{lem:planar-completion}]
		We use the saturation theory of Ambrosio, Caselles, Masnou, and Morel~\cite{ambrosio2001connected}: their union, saturation, hole-decomposition, and Jordan-representation results (Propositions 5(i), 6(ii), and 9; Theorem 7) give, respectively, indecomposability of unions with positive common boundary, preservation under saturation, the perimeter-additive hole decomposition, and the planar Jordan representation.
		
		Since $I_\tension(E;\C)>0$, the trace formula gives $\calH^1(\partial^*E\cap\partial^*\C)>0$, with opposite normals there. Thus $\mathcal S:=E\cup\C$ is indecomposable. Since $\C$ is convex it is indecomposable; the positive common reduced boundary permits application of the union result.
		
		Saturation fills all finite-measure complementary components. The saturation theorem makes the resulting set simple, and the planar Jordan-representation theorem for simple sets identifies it, modulo Lebesgue-null sets, with the bounded interior $G$ of a rectifiable Jordan curve $\mathscr J$. In this representation $\mathscr J$ and $\partial^*G$ agree modulo $\calH^1$, and the hole-decomposition identities give
		\[
		G=\mathcal S\cup\bigcup_{j=1}^{\infty}H_j,\qquad
		\P(\mathcal S)=\P(G)+\sum_{j=1}^{\infty}\P(H_j),\qquad
		\partial^*G\subset\partial^*\mathcal S
		\]
		modulo the appropriate null sets. Put $E^+=G\setminus\C$.
		
		\emph{Connectedness of the completed trace.} The Jordan boundary agrees with $\partial^*G$ modulo $\calH^1$. Hence $\calH^1(\mathscr J\cap\operatorname{int}\C)=0$. If $\mathscr J$ met $\operatorname{int}\C$, it would have a nondegenerate subarc in this open set, a contradiction. Since $\operatorname{int}\C$ is connected and lies almost everywhere in $G$, it follows that $\operatorname{int}\C\subset G$ and $\C\subset\overline G$.
		We claim that $\mathscr J\setminus\C$ has at most one component. The contradiction comes from cutting through the obstacle, where the competitor is absent, so that no new interface is created in $E$.
		Otherwise choose $a,b\in \mathscr J\cap\partial\C$ so that both joining subarcs of $\mathscr J$ contain positive-length exterior portions. Choose $c\in\operatorname{int}\C$ off the line through $a,b$, and put $\sigma=[a,c]\cup[c,b]$. For a convex body, every open segment joining a boundary point to an interior point lies in the interior. Thus $\sigma$ is simple and its relative interior lies in $\operatorname{int}\C\subset G$. It is a crosscut of $G$, separating it into Jordan subdomains $G_1,G_2$. Set $E_i=E\cap G_i$.
		
		Each exterior subarc contains a nondegenerate open subarc at positive distance from $\C$ after shortening. Since $\mathscr J=\partial^*G$ modulo $\calH^1$ and $\partial^*G\subset\partial^*(E\cup\C)$ modulo $\calH^1$, almost every point of that subarc belongs to $\partial^*E$. Choose such a point on each side, away from the endpoints. A sufficiently small ball about that point avoids $\C$, the crosscut, and the other boundary subarc; within $G$ it lies on the corresponding side of the crosscut. Since $E\subset G$ up to a null set, the density-$1/2$ property gives $|E_i|>0$ for both sides.
		
		To justify perimeter additivity, first restrict away from balls of radius $\rho$ about $a,b$. On each compact subarc of $\sigma\setminus\{a,b\}$ there is an open neighborhood contained in $\operatorname{int}\C$, where $\chi_E=0$ almost everywhere. Hence both BV traces on the polygonal cut vanish, and the restriction formula adds no jump across that cut. The portions of $\mathscr J$ introduce no additional interface between $E_1$ and $E_2$: they bound $G$, and $\chi_E=0$ almost everywhere outside $G$. The restriction formula, equivalently applied to the Jordan subdomains $G_i$, therefore partitions the original variation measure between the two sides.
		
		The endpoint balls can be removed along radii for which their artificial boundary contributions tend to zero, since $\chi_E$ is bounded and those circles have length $2\pi\rho$; moreover $|D\chi_E|$ has no atoms. Letting $\rho\to0^+$ proves
		\[
		E=E_1\cup E_2,\qquad \P(E)=\P(E_1)+\P(E_2),
		\]
		contradicting indecomposability.
		
		If $\mathscr J\cap\C=\emptyset$, then $E^+$ wets all of $\partial\C$. Otherwise $\mathscr J$ cannot be contained in $\partial\C$: two Jordan curves with this inclusion coincide, which would give $G=\operatorname{int}\C$ and contradict $|E|>0$. The claim now shows that $\mathscr J\cap\partial\C$ is a connected closed arc, possibly a point. Its complement in $\partial\C$ is connected. 
		At $\calH^1$-almost every point of $\mathscr J\cap\partial\C$, both boundaries are reduced boundaries. Since $\operatorname{int}\C\subset G$, their half-space blow-ups are nested and hence coincide; in particular, their outer normals agree. The exterior trace of $\chi_{G\setminus\C}$ is therefore zero on $\mathscr J\cap\partial\C$. On $\partial\C\cap G$ it equals one because $G$ is open. This proves the connected-trace assertion. See also~\cite[Lemma~5.7, Step~2]{mader2025quantitative} for the isotropic construction.
		
		\emph{Anisotropic energy accounting.} The perimeter-additive decomposition gives
		\[
		D\chi_{\mathcal S}=D\chi_G-\sum_{j=1}^{\infty}D\chi_{H_j},\quad\text{ and }\quad
		|D\chi_{\mathcal S}|=|D\chi_G|+\sum_{j=1}^{\infty}|D\chi_{H_j}|.
		\]
		The boundaries $\partial^*G$ and $\partial^*H_j$, $j\geq1$, are mutually disjoint modulo $\calH^1$; the normal of $\mathcal S$ is that of $G$ on $\partial^*G$ and is $-\nu_{H_j}$ on $\partial^*H_j$. Thus
		$\PerAni(\mathcal S)=\PerAni(G)+\sum_{j=1}^{\infty}\P_{\widecheck\tension}(H_j)$.
		The trace identities for $\mathcal S=E\cup\C$ and $G=E^+\cup\C$ read
		\[
		\begin{aligned}
			\PerAni(\mathcal S)&=\PerAni(E;U)+\int_{\partial\C}\tension(\nu_\C)(1-T_E)\,d\calH^1,\\
			\PerAni(G)&=\PerAni(E^+;U)+\int_{\partial\C}\tension(\nu_\C)(1-T_{E^+})\,d\calH^1.
		\end{aligned}
		\]
		Since $T_{E^+}-T_E=\sum_{j=1}^{\infty}T_{H_j}$, the obstacle-boundary terms cancel against those in the full perimeters of the holes. Consequently,
		\[
		\begin{aligned}
			\PerAni(E;U)-\PerAni(E^+;U)&=\sum_{j=1}^{\infty}\P_{\widecheck\tension}(H_j;U),\\
			I_\tension(E^+;\C)-I_\tension(E;\C)&=\sum_{j=1}^{\infty}I_\tension(H_j;\C).
		\end{aligned}
		\]
		These identities also hold for the countable family: $G$ is bounded and $\sum_{j=1}^{\infty}(|H_j|+\P(H_j))<\infty$, so the partial sums of the hole indicators converge in the $BV$-norm, and their traces converge on bounded Lipschitz neighborhoods of $\partial\C$. By Lemma~\ref{rev:lem:coercivity},
		$0\leq\sum_{j=1}^{\infty} I_\tension(H_j;\C) \leq\sum_{j=1}^{\infty}\P_{\widecheck\tension}(H_j;U)<\infty$.
		Subtracting the energies gives the sum of the pocket identities~\eqref{eq:pocket-energy-identity}. For $-1<\lambda\leq0$ each summand is at least $(1+\lambda)\P_{\widecheck\tension}(H_j;U)$; for $\lambda\geq0$ it is at least $\P_{\widecheck\tension}(H_j;U)$. This proves~\eqref{eq:completion-energy-gain} for every $\lambda>-1$.
	\end{proof}
	
	Apply the connected-contact comparison to each completed indecomposable component and sum the bounds. Truncation far from each component removes the boundedness assumption on the obstacle without changing the relevant boundary terms.
	\begin{theorem}[Unrestricted planar comparison]\label{thm:unrestricted-planar}
		Let \(d=2\), let \(\C\subsetneq\R^2\) be closed and convex with nonempty interior, and let \(\tension=h_W\), where \(W\) is a convex body with \(0\in\operatorname{int}W\). For every \(-1<\lambda\leq 0\) and every finite-perimeter set \(E\subset\R^2\setminus\C\) with \(0<|E|<\infty\),
		\begin{equation}\label{eq:unrestricted-planar}
			\EnergyC(E)\geq 2\sqrt{m_{\tension,\lambda}^{\min}|E|}.
		\end{equation}
		Equality holds if and only if, modulo a null set,
		$E=x_0+tK_{\tension,\lambda}(n)$
		for some \(t>0\), where
		$m_{\tension,\lambda}(n)=m_{\tension,\lambda}^{\min}$,
		and the entire truncating facet rests on a supporting face of \(\C\) with outer normal \(n\), as in Theorem~\ref{rev:thm:filling}.
	\end{theorem}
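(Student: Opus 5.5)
The argument has three stages: decompose $E$ into indecomposable components, apply planar completion and Corollary~\ref{cor:connected-planar} to each component, and sum the resulting bounds by concavity of the square root. By Lemma~\ref{rev:lem:coercivity} the zero extension of $E$ has finite total perimeter. The decomposition theorem of Ambrosio--Caselles--Masnou--Morel then writes $E=\bigcup_i E_i$ modulo null sets, with at most countably many indecomposable components of positive area. The perimeter measure is additive: $|D\chi_E|=\sum_i|D\chi_{E_i}|$, with $\nu_{E_i}=\nu_E$ on $\partial^*E_i$. The exterior traces satisfy $T_E=\sum_iT_{E_i}$ almost everywhere on $\partial\C$. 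Hence the energy is additive, $\EnergyC(E)=\sum_i\EnergyC(E_i)$. Once each component satisfies $\EnergyC(E_i)\geq2\sqrt{m^{\min}_{\tension,\lambda}|E_i|}$, the elementary inequality $\sum_i\sqrt{a_i}\geq\sqrt{\sum_ia_i}$, which is strict when at least two $a_i>0$, gives~\eqref{eq:unrestricted-planar}. It also shows that equality forces $E$ to be indecomposable.

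For a single indecomposable component $F=E_i$ there are two cases. If $I_\tension(F;\C)=0$, the trace vanishes, so $\EnergyC(F)=\PerAni(F)$. The full-space Wulff inequality gives $\PerAni(F)\geq2\sqrt{|W||F|}$. Since every cap satisfies $K_{\tension,\lambda}(n)\subsetneq W$ with a removed part of positive area when $\lambda>-1$, we have $|W|>m^{\min}_{\tension,\lambda}$, and the inequality is strict. If $I_\tension(F;\C)>0$ and $\C$ is bounded, Lemma~\ref{lem:planar-completion} produces $F^+\supset F$ with a connected wetted trace and $\EnergyC(F)\geq\EnergyC(F^+)$. Corollary~\ref{cor:connected-planar} then gives $\EnergyC(F^+)\geq2\sqrt{m^{\min}|F^+|}\geq2\sqrt{m^{\min}|F|}$. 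Equality forces $|F^+|=|F|$, so no pocket has positive area, and also $F^+=F$ is an equality configuration of that corollary.

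\emph{Unbounded obstacles; this is the main technical step.} I would replace $\C$ by $\C_R=\C\cap\overline{B_R}$, a convex body, with $R$ chosen large. The difficulty is that $F$ may be unbounded, or may approach the artificial boundary of $\C_R$. To handle this I would first truncate $F$: take $F\cap B_\rho$ for generic radii, so that the slice contribution $\calH^1(F\cap\partial B_\rho)$ is small, then apply the argument to the finitely many indecomposable pieces of positive area. Letting $\rho\to\infty$, lower semicontinuity of the square root bound preserves the inequality. For bounded $F$ and $R$ large, $\partial\C_R$ and $\partial\C$ agree near $F$. The exterior $U_R$ is larger than $U$, but $F\subset U\subset U_R$, and the energies of $F$ relative to $\C$ and to $\C_R$ coincide because the trace of $F$ vanishes on $\partial\C_R\setminus\partial\C$. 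Completion pockets relative to $\C_R$ may lie partly inside $\C\setminus\C_R$. This does not matter for the inequality, since Corollary~\ref{cor:connected-planar} is applied to the obstacle $\C_R$, which is a legitimate convex obstacle, and the bound obtained is independent of the obstacle.

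For the characterization of equality, the case analysis above shows that equality forces $E$ bounded, indecomposable and with no pockets. It also forces $E$ to be an equality configuration of Corollary~\ref{cor:connected-planar} relative to $\C_R$: namely $E=x_0+tK_{\tension,\lambda}(n)$ with $n$ a minimizing direction and the whole facet in a supporting face of $\C_R$ with outer normal $n$. Since this face lies in $\partial\C$ and away from $\partial B_R$ for $R$ large, the supporting half-space property passes back to $\C$ by the convexity argument in Theorem~\ref{rev:thm:filling}. Conversely, such configurations attain equality by the computation in that theorem, because the trace is exactly the facet.
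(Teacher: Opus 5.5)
Your proof of the inequality is sound, and for unbounded obstacles it takes a different route from the paper. You truncate the competitor at generic radii and use $\lambda\le0$ to get $\EnergyC(F\cap B_\rho)\le\EnergyC(F)+C_\tension\calH^1(F\cap\partial B_\rho)$. The paper instead uses that a planar indecomposable set of finite area and perimeter is essentially bounded \cite[Remark~2.9]{dayrens2022connected}, so its components never need truncating. (The truncated set may have countably many components, not finitely many; your summation argument handles this.)

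The gap is in the equality case when $\C$ is unbounded, for instance a half-plane, which is the model equality configuration. You assert that ``the case analysis above shows that equality forces $E$ bounded,'' but nothing in that analysis shows it. Boundedness of $E^+$ comes from Lemma~\ref{lem:planar-completion}, which requires a bounded obstacle. Your truncation only carries the inequality to the limit $\rho\to\infty$, and the slice errors discard all equality information along the way. Without boundedness you cannot place $E$ compactly inside some $B_R$, so you cannot invoke the equality case of Corollary~\ref{cor:connected-planar} relative to $\C_R$.

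There are two ways to close this. Since equality already forces $E$ to be indecomposable, you can cite essential boundedness of planar indecomposable sets, as the paper does. Alternatively, apply the inequality you have now proved for all competitors to both $E\cap B_\rho$ and $E\setminus\overline{B_\rho}$. Their energies sum to at most $\EnergyC(E)+2C_\tension\calH^1(E\cap\partial B_\rho)$ for almost every $\rho$. With $v=|E|$, $b(\rho)=|E\setminus B_\rho|$ and $-b'(\rho)=\calH^1(E\cap\partial B_\rho)$, equality for $E$ gives
\[
\sqrt{v-b}+\sqrt b-\sqrt v\le \frac{C_\tension}{\sqrt{m^{\min}_{\tension,\lambda}}}\,\bigl(-b'(\rho)\bigr).
\]
The left side is at least $(2-\sqrt2)\sqrt b$ once $b\le v/2$. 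Hence $\sqrt b$ decreases at a definite rate and vanishes at a finite radius. After that, your transfer of the supporting line from $\C_R$ back to $\C$ works as written.
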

	
	\begin{proof}[Proof of Theorem~\ref{thm:unrestricted-planar}]
		First let $\C$ be bounded and $E$ indecomposable. If its trace vanishes, the full-space Wulff inequality gives
		\[
		\EnergyC(E)=\PerAni(E)\geq2\sqrt{|W||E|}
		>2\sqrt{m_{\tension,\lambda}^{\min}|E|},
		\]
		since the cutting planes meet $\operatorname{int}W$. Otherwise, Lemma~\ref{lem:planar-completion} and Corollary~\ref{cor:connected-planar} yield
		\[
		\EnergyC(E)\geq\EnergyC(E^+)
		\geq2\sqrt{m_{\tension,\lambda}^{\min}|E^+|}
		\geq2\sqrt{m_{\tension,\lambda}^{\min}|E|}.
		\]
		For a general competitor, the perimeter-additive indecomposable decomposition $E=\bigcup_{j=1}^{\infty}E_j$ satisfies
		\[
		D\chi_E=\sum_{j=1}^{\infty}D\chi_{E_j},\quad\text{ and }\quad
		|D\chi_E|=\sum_{j=1}^{\infty}|D\chi_{E_j}|,
		\]
		by~\cite{ambrosio2001connected}. Equality of variations identifies the normals on the corresponding reduced boundaries, giving additivity of the anisotropic free perimeter. The decomposition converges in the $BV$ norm; restricting the derivative to $\partial\C$ in the zero-extension formula gives $T_E=\sum_{j=1}^{\infty}T_{E_j}$ almost everywhere. Thus the wetted integral and energy are additive, and
		\[
		\EnergyC(E)=\sum_{j=1}^{\infty}\EnergyC(E_j)
		\geq2\sqrt{m_{\tension,\lambda}^{\min}}
		\sum_{j=1}^{\infty}\sqrt{|E_j|}
		\geq2\sqrt{m_{\tension,\lambda}^{\min}|E|}.
		\]
		For unbounded $\C$, every planar indecomposable component of finite volume and perimeter is essentially bounded~\cite[Remark~2.9]{dayrens2022connected}. Choose $R_j$ so that $E_j$ is essentially contained in a compact subset of $B_{R_j}$ and $\C\cap\overline{B_{R_j}}$ has nonempty interior. Replacing $\C$ by this convex body leaves both surface terms of $E_j$ unchanged. The bounded-obstacle argument applies to each component and can again be summed.
		
		Equality forces a single positive-volume component. Its trace is nonzero by the strict zero-trace bound; equality in the completion chain gives $|E^+\setminus E|=0$. Corollary~\ref{cor:connected-planar} then identifies a Wulff cap in a minimizing direction, with its facet on a supporting face of the truncated obstacle. To recover support for the original obstacle, choose $R$ with the entire convex cap in $B_{R-1}$. If $\Pi_s(n)$ supports $\C\cap\overline{B_R}$ along its facet, take $x$ in the relative interior of that facet. For any $y\in\C$ with $y\cdot n>s$, the points $(1-t)x+ty$ belong to $\C\cap B_R$ and lie above $\Pi_s(n)$ for small $t>0$, a contradiction. Hence the same plane supports $\C$. Conversely, every stated configuration gives equality by Theorem~\ref{rev:thm:filling}.
	\end{proof}
	
	Added area raises the comparison bound, while removed interfaces consume energy excess. Retaining these contributions controls pocket area and perimeter; the zero-adhesion comparison for the pockets gives the stronger quadratic area bound below.
	\begin{corollary}[Quantitative control of exterior pockets]\label{cor:quantitative-pockets}
		In the setting of Lemma~\ref{lem:planar-completion}, let \(-1<\lambda\le0\), \(v=|E|\), and let $\Deficit=\Deficit(E)$ be the normalized capillary energy deficit in~\eqref{eq:normalized-capillary-deficit}. Then
		\[
		\frac{|E^+\setminus E|}{v}
		\le 2\Deficit+\Deficit^2
		\]
		and
		\[
		(1+\lambda)\sum^{\infty}_{j=1}\P_{\widecheck\tension}(H_j;U)
		\leq 2\sqrt{m_{\tension,\lambda}^{\min}v\,}\,\Deficit.
		\]
		Moreover, the added volume satisfies the quadratic estimate
		\[
		\frac{|E^+\setminus E|}{|E|}
		\leq \frac{m_{\tension,\lambda}^{\min}}{(1+\lambda)^2m_{\tension,0}^{\min}}\,\Deficit^2.
		\]
	\end{corollary}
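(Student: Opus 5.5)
The plan is to compare the completed set $E^+$ with the universal bound, and to keep track of the energy released when the pockets are filled. Throughout, $-1<\lambda\le0$, so $\min\{1,1+\lambda\}=1+\lambda$. Lemma~\ref{lem:planar-completion} then gives
\[
2\sqrt{m_{\tension,\lambda}^{\min}v}\,(1+\Deficit)=\EnergyC(E)\ \geq\ \EnergyC(E^+)+(1+\lambda)\sum_{j=1}^{\infty}\P_{\widecheck\tension}(H_j;U).
\]
The completed set $E^+$ is a finite-perimeter subset of $\R^2\setminus\C$, its volume is $|E^+|\ge v>0$, and its wetted trace has a connected representative. Corollary~\ref{cor:connected-planar} therefore applies to it, and so does Theorem~\ref{thm:unrestricted-planar} directly, giving $\EnergyC(E^+)\geq 2\sqrt{m_{\tension,\lambda}^{\min}|E^+|}$.

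\textbf{First two estimates.} Drop the nonnegative pocket sum in the display and insert the bound for $E^+$. This gives $\sqrt{|E^+|/v}\le 1+\Deficit$. Squaring and subtracting $1$, and using $|E^+|=v+|E^+\setminus E|$, yields $|E^+\setminus E|/v\le 2\Deficit+\Deficit^2$. Next, keep the pocket sum and use the monotonicity $|E^+|\ge v$, so that $\EnergyC(E^+)\ge 2\sqrt{m_{\tension,\lambda}^{\min}v}$. Subtracting this from the display gives $(1+\lambda)\sum_j\P_{\widecheck\tension}(H_j;U)\le 2\sqrt{m_{\tension,\lambda}^{\min}v}\,\Deficit$.

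\textbf{Quadratic estimate.} Here I bound each pocket perimeter from below by a zero-adhesion capillary comparison for the reflected tension. For $\lambda=0$ the energy of $H_j$ with tension $\widecheck\tension$ is $\E_{\widecheck\tension,0,\C}(H_j)=\P_{\widecheck\tension}(H_j;U)$, because the wall term vanishes. The tension $\widecheck\tension=h_{-W}$ is admissible, since $0\in\operatorname{int}(-W)$.

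Its caps are
\[
K_{\widecheck\tension,0}(n)=(-W)\cap\{p\cdot n\ge0\}=-K_{\tension,0}(-n).
\]
Hence the two cap-volume functions agree after $n\mapsto -n$, and $m^{\min}_{\widecheck\tension,0}=m^{\min}_{\tension,0}$.

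Each pocket with $|H_j|>0$ is a finite-perimeter subset of $U$ of finite positive area. Theorem~\ref{thm:unrestricted-planar}, applied with tension $\widecheck\tension$ and $\lambda=0$, gives $\P_{\widecheck\tension}(H_j;U)\ge 2\sqrt{m^{\min}_{\tension,0}|H_j|}$; null pockets contribute nothing. Subadditivity of the square root gives
\[
\sum_j\P_{\widecheck\tension}(H_j;U)\ \ge\ 2\sqrt{m^{\min}_{\tension,0}\,|E^+\setminus E|},
\]
where $|E^+\setminus E|=\sum_j|H_j|$ by disjointness. Combining with the second estimate gives
\[
(1+\lambda)\sqrt{m^{\min}_{\tension,0}|E^+\setminus E|}\le\sqrt{m^{\min}_{\tension,\lambda}v}\,\Deficit,
\]
and squaring gives the claimed quadratic bound.

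The only step needing care is this application to the pockets. One must check that the reflected tension satisfies the standing hypotheses of Theorem~\ref{thm:unrestricted-planar}, and that its minimal zero-adhesion cap volume coincides with that of $\tension$; the cap identity above settles both. The rest is algebra on the completion inequality.
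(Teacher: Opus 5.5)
Your proposal is correct and follows the paper's proof essentially step for step. The completion inequality combined with the lower bound $\EnergyC(E^+)\geq 2\sqrt{m^{\min}_{\tension,\lambda}|E^+|}$ gives the first two estimates. The zero-adhesion comparison for the reflected tension, applied to each pocket and summed via subadditivity of the square root, then gives the quadratic one; your explicit check that $K_{\widecheck\tension,0}(n)=-K_{\tension,0}(-n)$, hence $m^{\min}_{\widecheck\tension,0}=m^{\min}_{\tension,0}$, supplies a detail the paper only asserts.
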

	
	\begin{proof}[Proof of Corollary~\ref{cor:quantitative-pockets}]
		The completion estimate and Corollary~\ref{cor:connected-planar} give
		\[
		1+\Deficit
		\geq \sqrt{\frac{|E^+|}{v}}
		+\frac{(1+\lambda)\sum^{\infty}_{j=1}\P_{\widecheck\tension}(H_j;U)}
		{2\sqrt{m_{\tension,\lambda}^{\min}v}}.
		\]
		Dropping the second term and squaring proves the volume estimate. Using \(|E^+|\ge v\) instead proves the perimeter estimate.
		
		To obtain the quadratic estimate, apply Theorem~\ref{thm:unrestricted-planar} with zero adhesion and the reflected anisotropy $\widecheck\tension(\nu):=\tension (-\nu)$ to each exterior pocket $H_j$. Since
		$m_{\widecheck\tension,0}^{\min} = m_{\tension,0}^{\min}$,
		we have
		\[
		\begin{aligned}
			\sum_{j=1}^{\infty}\operatorname{Per}_{\widecheck\tension}(H_j;U)
			&\geq 2\sqrt{m_{\tension,0}^{\min}} \sum_{j=1}^{\infty}\sqrt{|H_j|}\\
			&\geq 2\sqrt{m_{\tension,0}^{\min}|E^+\setminus E|}.
		\end{aligned}
		\]
		Combining this bound with the perimeter estimate above yields
		\[
		\frac{|E^+\setminus E|}{|E|} \leq \frac{m_{\tension,\lambda}^{\min}}{(1+\lambda)^2m_{\tension,0}^{\min}}\,\Deficit^2,
		\]
		as claimed.
	\end{proof}
	
	Near equality, three losses must be small: the cost of splitting area between components, the cap-volume excess of a nonminimizing direction, and the Wulff deficit of the filled principal component. The next theorem quantifies these effects.
	
	{\emergencystretch=2em All three estimates use a universal constant, independent of the obstacle, the anisotropy, and the admissible nonpositive adhesion parameter.\par}
	\begin{theorem}[Unrestricted quantitative planar comparison]\label{thm:unrestricted-planar-stability}
		Let $d=2$, let $\C\subsetneq\R^2$ be closed and convex with nonempty interior, and let $\tension=h_{\wulff}$, where $\wulff$ is an arbitrary convex body containing the origin in its interior. Assume that $-1<\lambda\leq 0$ and set
		\[
		m_*:=m_{\tension,\lambda}^{\min}.
		\]
		For every finite-perimeter set $E\subset\R^2\setminus\C$ with
		$0<|E|<\infty$, recall the deficit~\eqref{eq:normalized-capillary-deficit}, which reads
		\[
		\Deficit(E)=
		\frac{\EnergyC(E)}{2\sqrt{m_*|E|}}-1.
		\]
		There exists a universal constant \(C_{\mathrm{stab}}>0\) such that, for every such \(E\), one can find \(n\in\mathbb{S}^1\) and \(x_0\in\R^2\) satisfying
		\[
		0\leq
		\frac{m_{\tension,\lambda}(n)}{m_*}-1
		\leq C_{\mathrm{stab}}\,\Deficit(E)
		\]
		and
		\[
		\frac{\left|E\triangle
			\left(x_0+\sqrt{(|E|/m_{\tension,\lambda}(n))}
			\,K_{\tension,\lambda}(n)\right)\right|}{|E|}
		\leq C_{\mathrm{stab}}\sqrt{\Deficit(E)}.
		\]
		Moreover, if $E_1$ denotes a largest indecomposable component of $E$, then
		\[
		\frac{|E\setminus E_1|}{|E|}
		\leq C_{\mathrm{stab}}\,\Deficit(E)^2.
		\]
		No connectedness assumption on the wetted trace, or smoothness, symmetry, or strict convexity assumption on $\wulff$, is required.
	\end{theorem}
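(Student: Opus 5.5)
We may assume $\Deficit=\Deficit(E)\le\varepsilon_0$ for a small universal $\varepsilon_0$. Otherwise all three estimates are trivial. The symmetric-difference ratio is at most $2$, the component ratio is at most $1$, and for the directional excess we take $n$ to be a minimizing direction, which makes that quantity zero. The plan is to combine the decomposition from the proof of Theorem~\ref{thm:unrestricted-planar} with the exact filling identity~\eqref{rev:eq:filling-identity}, and then apply the planar quantitative Wulff inequality of Figalli--Maggi--Pratelli~\cite{figalli2010mass} to the filled set $G=E_1^+\cup D$ with respect to the cap $K=K_{\tension,\lambda}(n)$. In dimension two that inequality has a constant independent of the convex body, and this is what makes $C_{\mathrm{stab}}$ universal.

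\emph{Step 1: concentration.} Decompose $E=\bigcup_j E_j$ into indecomposable components with $v_j=|E_j|$, where $E_1$ is largest. The summation in Theorem~\ref{thm:unrestricted-planar} gives
\[
1+\Deficit\ \ge\ \sum_j\sqrt{v_j/v}.
\]
Let $s=\sum_{j\ge2}v_j/v$. Because $\sqrt{\cdot}$ is subadditive and $v_j\le v_1$, we have
\[
\sum_j\sqrt{v_j/v}\ \ge\ \sqrt{1-s}+\sqrt{s}\ \ge\ 1+c\sqrt{s},
\]
which yields $s\le C\Deficit^2$. This proves the third estimate. As a by-product, $E_1$ has deficit at most $\Deficit_1\le C\Deficit$, up to the volume renormalization.

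\emph{Step 2: filling and splitting of the excess.} Apply Lemma~\ref{lem:planar-completion} to $E_1$ and Corollary~\ref{cor:quantitative-pockets}. This gives $|E_1^+\setminus E_1|\le C\Deficit^2 v$, so the pockets are negligible. Next, $E_1^+$ has connected trace, so Corollary~\ref{cor:connected-planar} produces a direction $n$ and a filling $D$. If the trace is empty, the full Wulff inequality gives a bound strictly above the cap bound by the universal factor $\sqrt{|W|/m_*}\ge\sqrt2$, since every cut through the interior halves at most. This is incompatible with small $\Deficit$, so this case is excluded. In the remaining case, identity~\eqref{rev:eq:filling-identity} gives
\[
2\sqrt{m_*v}\,(1+\Deficit)\ \ge\ P_{h_K}(G)+R_K(E_1^+)\ \ge\ 2\sqrt{m_{\tension,\lambda}(n)\,|G|}+\bigl(P_{h_K}(G)-2\sqrt{|K||G|}\bigr),
\]
with $|G|\ge|E_1^+|\ge(1-C\Deficit^2)v$. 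Hence the three quantities
\[
\frac{m_{\tension,\lambda}(n)}{m_*}-1,\qquad \frac{|D|}{v},\qquad \text{the Wulff deficit of $G$ relative to $K$}
\]
are each $O(\Deficit)$. The first of these is the directional estimate.

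\emph{Step 3: shape.} The quantitative Wulff inequality gives
\[
\min_{x}\ \frac{|G\triangle(x+rK)|}{|G|}\ \le\ C\sqrt{\Deficit},\qquad r=\sqrt{|G|/|K|}.
\]
We then pass from $G$ to $E$. Removing $D$ costs $|D|\le C\Deficit v$, removing the pockets and the other components costs $O(\Deficit^2)v$, and rescaling from $r$ to $\sqrt{v/m_{\tension,\lambda}(n)}$ costs $C|r^2-v/|K||\le C\Deficit$ in relative area. All of these are dominated by $\sqrt{\Deficit}$ when $\Deficit\le1$.

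The main obstacle is Step 2, specifically the claim $|D|\le C\Deficit v$. Deriving it from the monotonicity of $t\mapsto t^{1/2}$ requires a uniform lower comparison $m_{\tension,\lambda}(n)\ge m_*$, which holds by definition, so the volume-only bound really does absorb $|D|$ linearly. The subtle point is checking that the pocket and component errors enter with universal constants. For the pocket bound, the factor $m_*/((1+\lambda)^2m_{\tension,0}^{\min})$ in Corollary~\ref{cor:quantitative-pockets} is not bounded as $\lambda\to-1$. To avoid it, I would instead use the first, non-quadratic pocket estimate $|E^+\setminus E|\le(2\Deficit+\Deficit^2)v$, which is universal and still $O(\Deficit)\le O(\sqrt{\Deficit})$.
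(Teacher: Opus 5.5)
\textbf{Gap: the dry principal component is not excluded.} Your positive-trace branch is essentially the paper's argument: completion of $E_1$, the exact filling identity for $G=E_1^+\cup D$, Figalli--Maggi--Pratelli applied to $G$ relative to $K$, and nested dilations to return to $E_1$ and then to $E$. You also correctly replace the quadratic pocket bound by the linear one to keep the constant uniform. The gap is in Step~2, where you claim that a principal component with empty trace is incompatible with small deficit because $\sqrt{|W|/m_*}\ge\sqrt2$. That holds at $\lambda=0$, since the caps in directions $\pm n$ split $W$, but it fails for $\lambda<0$. There the cutting line $p\cdot n=\lambda\tension(-n)$ lies beyond the origin. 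As $\lambda\to(-1)^+$ it tends to the supporting line $p\cdot n=-\tension(-n)$, so $m_*\to|W|$. For the Euclidean disk, $m_*=\arccos\lambda-\lambda\sqrt{1-\lambda^2}\in[\pi/2,\pi)$, so in fact $\sqrt{|W|/m_*}\in(1,\sqrt2]$. A unit disk at positive distance from a half-plane wall is dry and has deficit $\sqrt{\pi/m_*}-1\to0$. It therefore lies in your small-deficit regime for every universal $\varepsilon_0$, yet your argument declares the case impossible. You could exclude it only for $\Deficit<\sqrt{|W|/m_*}-1$. But then the trivial large-deficit branch forces $C_{\mathrm{stab}}\ge2(\sqrt{|W|/m_*}-1)^{-1/2}$, which is unbounded in $\lambda$ and $W$, and universality is lost.

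\textbf{How to treat the dry case.} This is what the paper does. If $I_\tension(E_1;\C)=0$, then $\EnergyC(E_1)=\PerAni(E_1;\R^2)$. With $w=|W|$, the full-space Wulff deficit of $E_1$ is $\eta=(1+\Deficit_1)\sqrt{m_*/w}-1\in[0,\Deficit_1]$. The inequality $\eta\ge0$ also gives $m_*/w\ge(1+\Deficit_1)^{-2}$. Quantitative Wulff stability then yields $|E_1\triangle(x_0+aW)|\le Cv_1\sqrt{\Deficit_1}$ with $a=\sqrt{v_1/w}$. Take a minimizing direction $n_*$, set $K_*=K_{\tension,\lambda}(n_*)$ and $b=\sqrt{v_1/m_*}\ge a$. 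The inclusions $K_*\subset W$ and $0\in K_*$ give $aK_*\subset aW\cap bK_*$, hence $|aW\triangle bK_*|\le2v_1(1-m_*/w)\le4v_1\Deficit_1$. The directional excess is zero in this case. With this branch added, your recombination goes through unchanged.

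\textbf{Smaller points.}
\begin{itemize}
\item Lemma~\ref{lem:planar-completion} is stated for a convex body. For unbounded $\C$ you must first replace $\C$ by $\C\cap\overline{B_R}$, using essential boundedness of the planar indecomposable set $E_1$.
\item In Step~1, the bound $s\le C\Deficit^2$ needs $s$ bounded away from $1$. This is where $v_j\le v_1$ enters: $\sum_j\sqrt{t_j}\ge1/\sqrt{t_1}$, so $t_1>1/2$ once $\Deficit<\sqrt2-1$.
\item Your rescaling cost $|K|\,|r^2-r'^2|$ uses $0\in K$, which holds since $\lambda\le0$.
\end{itemize}
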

	
	For comparison with the fixed-obstacle problem, define
	\[
	\mathcal I_{\tension,\lambda,\C}(v)
	:=\inf\{\EnergyC(A):\,A\subset\R^2\setminus\C,\ |A|=v,\ \P(A)<\infty\},
	\quad B(v):=2\sqrt{m_*v}.
	\]
	For $v=|E|$, the deficit in Theorem~\ref{thm:unrestricted-planar-stability} decomposes as
	\begin{equation}\label{eq:universal-profile-deficit}
		\Deficit(E)=
		\frac{\EnergyC(E)-\mathcal I_{\tension,\lambda,\C}(v)}{B(v)}
		+\frac{\mathcal I_{\tension,\lambda,\C}(v)-B(v)}{B(v)}.
	\end{equation}
	The terms measure the excess above the fixed-obstacle profile and that profile's gap above the universal bound. A positive gap prevents the universal deficit from vanishing even along a minimizing sequence. The comparison cap need not be admissible for $\C$. The dependence of exterior profiles on obstacle geometry is already substantial in the isotropic zero-adhesion setting; see~\cite{fusco2024rigidity} for rigidity and large-volume asymptotics. Figure~\ref{fig:quantitative-cap-stability} illustrates concentration and comparison in measure.
	
	\begin{figure}[!htbp]
		\centering
		\scalebox{\CapFigureScale}{%
			\begin{minipage}{\textwidth}
				\centering
				\begin{tikzpicture}[cap panel]
					\CapPanel{a}
					\fill[cap obstacle] (.45,.85) rectangle (6.75,1.90);
					\draw[cap edge] (.45,1.90) -- (6.75,1.90);
					\begin{scope}[shift={(2.10,1.90)}]
						\path[cap liquid] ({1.65*sqrt(.92)},0)
						arc (0:180:{1.65*sqrt(.92)}) -- cycle;
						\draw[cap edge] ({1.65*sqrt(.92)},0)
						arc (0:180:{1.65*sqrt(.92)});
					\end{scope}
					\begin{scope}[shift={(5.80,1.90)}]
						\path[cap secondary] ({1.65*sqrt(.08)},0)
						arc (0:180:{1.65*sqrt(.08)}) -- cycle;
						\draw[cap edge] ({1.65*sqrt(.08)},0)
						arc (0:180:{1.65*sqrt(.08)});
					\end{scope}
					\node at (2.10,2.69) {$E_1$};
					\draw[cap leader] (5.75,3.10) -- (5.80,2.39);
					\node[cap label] at (5.75,3.25) {$E\setminus E_1$};
					\node[cap label] at (2.10,1.42) {$|E_1|=(1-r)v$};
					\node[cap label] at (5.55,1.42) {$|E\setminus E_1|=rv$};
					\node[cap label] at (.82,1.05) {$\C$};
					\node at (3.60,4.38) {$\Deficit=\sqrt{1-r}+\sqrt r-1$};
					\node at (3.60,.35) {$r=\Deficit^2+O(\Deficit^3)$};
				\end{tikzpicture}\hfill
				\begin{tikzpicture}[cap panel]
					\CapPanel{b}
					\fill[cap obstacle] (.45,.85) rectangle (6.75,1.90);
					\draw[cap edge] (.45,1.90) -- (6.75,1.90);
					\begin{scope}[shift={(3.40,1.90)},scale=1.85]
						\path[cap liquid] plot[domain=0:180,samples=121]
						({cos(\x)},{sin(\x)*(1+.3*cos(\x))}) -- cycle;
						\path[cap difference] plot[domain=0:90,samples=61]
						({cos(\x)},{sin(\x)*(1+.3*cos(\x))}) --
						plot[domain=90:0,samples=61] ({cos(\x)},{sin(\x)}) -- cycle;
						\path[cap difference] plot[domain=90:180,samples=61]
						({cos(\x)},{sin(\x)*(1+.3*cos(\x))}) --
						plot[domain=180:90,samples=61] ({cos(\x)},{sin(\x)}) -- cycle;
						\draw[cap edge] plot[domain=0:180,samples=121]
						({cos(\x)},{sin(\x)*(1+.3*cos(\x))});
						\draw[cap reference] (1,0) arc (0:180:1);
					\end{scope}
					\node at (3.40,2.70) {$E$};
					\draw[cap leader] (1.37,4.16) -- (2.20,3.32);
					\node[cap label,anchor=east] at (1.37,4.16) {$\widehat K$};
					\draw[cap leader] (5.55,4.20) -- (4.40,3.56);
					\node[cap label,anchor=west] at (5.55,4.20) {$E\triangle\widehat K$};
					\node[cap label] at (3.40,1.38) {$|\widehat K|=|E|$};
					\node[cap label] at (.82,1.05) {$\C$};
					\node at (3.60,.35) {$\displaystyle
						\frac{|E\triangle\widehat K|}{|E|}\leq C_{\mathrm{stab}}\sqrt\Deficit$};
				\end{tikzpicture}
			\end{minipage}%
		}
		\caption{Concentration and comparison in measure
			(Theorem~\ref{thm:unrestricted-planar-stability}).
			(a) Two separated equality caps give the displayed exact deficit and the sharp quadratic concentration order of Corollary~\ref{cor:sharp-component-concentration}. (b) A blue competitor and a green dashed model cap of equal area; purple hatching marks their symmetric difference. The comparison cap may be translated freely.}
		\label{fig:quantitative-cap-stability}
	\end{figure}
	\vspace{-.5em}
	\begin{proof}[Proof of Theorem~\ref{thm:unrestricted-planar-stability}]
		
		We select a principal component, control its filling and Wulff deficit, then remove the added regions and restore the total area.
		Write
		\[
		v=|E|,\qquad m_*=m_{\tension,\lambda}^{\min},\qquad
		\Deficit=\frac{\EnergyC(E)}{2\sqrt{m_*v}}-1.
		\]
		By Theorem~\ref{thm:unrestricted-planar}, \(\Deficit\geq0\). Let \(E=\bigcup_{j=1}^{\infty}E_j\) be its perimeter-additive indecomposable decomposition, and put \(v_j=|E_j|\) and \(t_j=v_j/v\). Relabel so that \(t_1=\max_{j\in\N}t_j\); the maximum exists because \(\sum_{j=1}^{\infty}t_j=1\). Fix the universal number \(\Deficit_c=1/10\).
		
		If \(\Deficit\geq\Deficit_c\), choose a minimizing direction \(n_*\) and any translate of \(\sqrt{v/m_*}\,K_{\tension,\lambda}(n_*)\). Its symmetric difference with \(E\) has measure at most \(2v\), the directional excess is zero, and \(|E\setminus E_1|/v\le1\). All conclusions follow after taking \(C_{\mathrm{stab}}\geq\max\{2/\sqrt{\Deficit_c},1/\Deficit_c^2\}\). It remains to consider \(0\leq\Deficit<\Deficit_c\).
		
		\medskip
		\noindent\textit{Concentration and the principal deficit.}
		Energy additivity and Theorem~\ref{thm:unrestricted-planar} give
		$1+\Deficit\ge\sum_{j=1}^{\infty}\sqrt{t_j}$.
		If \(t_1\leq1/2\), the right-hand side is at least \(\sqrt2\), a contradiction. Thus \(t_1>1/2\). For \(r=1-t_1\),
		$\Deficit\geq\sqrt{1-r}+\sqrt r-1\geq\sqrt r-r$,
		and hence
		$r\leq\Deficit^2/(1-2^{-1/2})^2$.
		Set
		$\Deficit_j:=\EnergyC(E_j)/(2\sqrt{m_*v_j})-1\geq 0$.
		Then
		\[
		\sum_{j=1}^{\infty}\sqrt{t_j}\,\Deficit_j
		=1+\Deficit-\sum_{j=1}^{\infty}\sqrt{t_j}\le\Deficit,
		\qquad
		0\le\Deficit_1\le\sqrt2\,\Deficit<1.
		\]

		\noindent\textit{A principal component with positive trace.}
		Suppose first that \(I_\tension(E_1;\C)>0\). If \(\C\) is unbounded, use the essential boundedness of planar indecomposable finite-perimeter sets of finite volume to choose \(R\) with \(E_1\) contained, up to a null set, in a compact subset of \(B_R\), and with \(\C_R=\C\cap\overline{B_R}\) having nonempty interior. The free perimeter and wetted integral of \(E_1\) are unchanged by this replacement. Thus the following completion argument can be carried out with a bounded convex obstacle.
		
		Apply Lemma~\ref{lem:planar-completion} to \(E_1\), obtaining \(F=E_1^+\). The proof of Corollary~\ref{cor:connected-planar}
		provides a filling $D$ in a direction $n$. Put
		$K=K_{\tension,\lambda}(n)$, $m_n=|K|$, $G=F\cup D$.	Here $\C_R$ denotes the original obstacle when it is bounded. Set
		\[
		\eta_K(G):=\frac{P_{h_K}(G)}{2\sqrt{m_n|G|}}-1,\qquad
		\mathscr R:=\mathcal E_{\tension,\lambda,\C_R}(E_1)
		-\mathcal E_{\tension,\lambda,\C_R}(F).
		\]
		Writing $\Gamma_F$ for the wetted trace of $F$ on $\partial\C_R$, Proposition~\ref{rev:prop:identity} gives the exact identity
		\begin{equation}\label{eq:principal-deficit-accounting}
			\begin{aligned}
				1+\Deficit_1={}
				\sqrt{\frac{m_n}{m_*}\frac{|G|}{v_1}}\bigl(1+\eta_K(G)\bigr)+\frac{(\mathscr R+R_K(F)-\lambda\,\Delta_\tension(\Gamma_F))}{2\sqrt{m_*v_1}}.
			\end{aligned}
		\end{equation}
		Every factor is at least one and every additive remainder is nonnegative. Thus a small principal deficit controls direction, added volume, Wulff shape error, and boundary losses separately.
		
		All contact terms refer to $\C_R$. The Wulff inequality gives $\eta_K(G)\geq0$, and the other remainders are nonnegative. Moreover, Lemma~\ref{lem:planar-completion} gives
		$\mathscr R\geq(1+\lambda)\sum_{j=1}^\infty\P_{\widecheck\tension}(H_j;\R^2\setminus\C_R)$,
		where $H_j$ are the completion holes. Since $m_n\geq m_*$ and $|G|\geq v_1$, dropping the remainders in~\eqref{eq:principal-deficit-accounting} yields
		\[
		1\leq\frac{m_n}{m_*}\leq(1+\Deficit_1)^2,\qquad
		1\leq\frac{|G|}{v_1}\leq(1+\Deficit_1)^2.
		\]
		Equation~\eqref{eq:principal-deficit-accounting} also gives $\eta_K(G)\leq\Deficit_1$. The quantitative Wulff inequality of Figalli--Maggi--Pratelli~\cite[Theorem~1.1]{figalli2010mass}, translating \(K\) by an interior point if necessary, gives a point \(x_0\) such that
		\[
		\left|G\triangle\left(x_0+\sqrt{|G|/m_n}\,K\right)\right|
		\leq C|G|\sqrt{\Deficit_1},
		\]
		with a dimensional constant $C$ independent of $K$. Because \(0\in K\), its positive dilations are nested. Moreover,
		$|G\setminus E_1|\le(2\Deficit_1+\Deficit_1^2)v_1$. This linear bound is uniform in $\lambda$, unlike the quadratic hole bound in Corollary~\ref{cor:quantitative-pockets}, whose constant may diverge as $\lambda\to(-1)^+$.
		Using $E_1\subset G$ and the nestedness of the comparison caps, the triangle inequality gives
		\[
		\left|E_1\triangle
		\left(x_0+\sqrt{v_1/m_n}\,K\right)\right|
		\leq
		\left|G\triangle
		\left(x_0+\sqrt{|G|/m_n}\,K\right)\right|+2(|G|-v_1).
		\]
		Indeed, removing $G\setminus E_1$ costs $|G|-v_1$ in symmetric difference, and shrinking the comparison cap from volume $|G|$ to volume $v_1$ costs the same amount, since $0\in K$ makes these dilations nested. Combining this estimate with the preceding bounds, $|G|\le(1+\Theta_1)^2v_1$, and $\Theta_1<1$, we obtain
		\[
		\left|E_1\triangle\left(x_0+\sqrt{v_1/m_n}\,K\right)\right|
		\leq Cv_1\sqrt{\Deficit_1},\qquad
		\frac{m_n}{m_*}-1\le3\Deficit_1.
		\]
		
		\noindent\textit{Uniformity of the constants.}
		The stability constant just used is dimensional in the quantitative Wulff inequality, with the Wulff deficit normalized by $2\sqrt{m_n|G|}$. The filling and completion estimates enter only through nonnegative terms in~\eqref{eq:principal-deficit-accounting}; the proof never divides by $1+\lambda$, a curvature bound for $\C$, or an ellipticity constant for $\tension$. Rescaling changes the comparison set by its volume difference, since $0\in K$ makes the dilations nested. These observations explain uniformity even for crystalline or highly eccentric Wulff bodies and as $\lambda\to(-1)^+$.
		
		\medskip
		\noindent\textit{A principal component with zero trace.}
		The full-space Wulff inequality allows a dry component to have small capillary deficit only when the minimum cap volume is close to $|W|$. We therefore compare first with $W$ and then with a cap of equal area.
		Suppose now that \(I_\tension(E_1;\C)=0\), and put \(w=|\wulff|\). The full-space Wulff deficit is
		\[
		\eta=\frac{\PerAni(E_1;\R^2)}{2\sqrt{wv_1}}-1
		=(1+\Deficit_1)\sqrt{\frac{m_*}{w}}-1.
		\]
		The Wulff inequality implies
		\[
		0\leq\eta\le\Deficit_1,\qquad
		\frac{m_*}{w}\geq\frac{1}{(1+\Deficit_1)^2}.
		\]
		Quantitative Wulff stability gives a translate \(x_0+a\wulff\), where \(a=\sqrt{v_1/w}\), such that
		\[
		|E_1\triangle(x_0+a\wulff)|
		\leq Cv_1\sqrt{\Deficit_1}.
		\]
		Choose a minimizing direction \(n_*\), let \(K_*=K_{\tension,\lambda}(n_*)\), and set \(b=\sqrt{v_1/m_*}\). Since \(a\le b\), \(K_*\subset\wulff\), and \(0\in K_*\), the set \(aK_*\) is contained in both \(a\wulff\) and \(bK_*\). Consequently,
		\[
		\begin{aligned}
			|a\wulff\,\triangle\, bK_*|
			&\leq 2v_1(1-m_*/w)\\
			&\leq 2v_1\bigl(1-1/(1+\Deficit_1)^2\bigr)
			\leq 4v_1\Deficit_1.
		\end{aligned}
		\]
		The same componentwise shape estimate follows, now with \(n=n_*\), \(m_n=m_*\), and zero directional excess.\smallskip
		
		\noindent\textit{Recombination.}
		In either case we have a cap of volume \(v_1\) with componentwise symmetric difference at most \(Cv_1\sqrt{\Deficit_1}\). Enlarge this cap concentrically to volume \(v\). Nestedness and the triangle inequality give
		\[
		\begin{aligned}
			\frac{\left|E\triangle\left(x_0+\sqrt{v/m_n}\,K_{\tension,\lambda}(n)\right)\right|}{v}
			&\leq 2r+C\frac{v_1}{v}\sqrt{\Deficit_1}\\
			&\leq C\sqrt\Deficit.
		\end{aligned}
		\]
		Also \(m_n/m_*-1\le C\Deficit\), and \(|E\setminus E_1|/v=r\le C\Deficit^2\). All constants used here are dimensional or numerical. Increasing \(C_{\mathrm{stab}}\) to include the large-deficit case proves the theorem.
	\end{proof}
	
	Grouping the area outside the largest component into one component minimizes the sum of square roots. Two separated equality caps attain the resulting sharp bound.
	\begin{corollary}[Sharp component concentration]\label{cor:sharp-component-concentration}
		Under the hypotheses of Theorem~\ref{thm:unrestricted-planar-stability}, if $0\leq\Deficit(E)<\sqrt2-1$ and $E_1$ is a largest indecomposable component, then
		\begin{equation}\label{eq:sharp-component-concentration}
			\frac{|E\setminus E_1|}{|E|}
			\leq\frac{1-\sqrt{1-(2\Deficit(E)+\Deficit(E)^2)^2}}{2}.
		\end{equation}
		The right-hand side is $\Deficit(E)^2+O(\Deficit(E)^3)$ as $\Deficit(E)\to0$. For this component-concentration estimate, both the exponent and the leading constant are optimal.
	\end{corollary}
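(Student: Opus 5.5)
The plan is to reduce the estimate to an elementary inequality for the area fractions $t_j$, exactly as in the concentration step of the proof of Theorem~\ref{thm:unrestricted-planar-stability}, but without discarding any terms. Let $E=\bigcup_j E_j$ be the perimeter-additive indecomposable decomposition, and set $t_j=|E_j|/|E|$, with $t_1=\max_j t_j$ and $r=1-t_1$.

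\textbf{Reduction to a two-term inequality.} Energy additivity and Theorem~\ref{thm:unrestricted-planar} applied to each component give $1+\Deficit\geq\sum_j\sqrt{t_j}$. Since $\sqrt{\cdot}$ is subadditive, $\sum_{j\geq2}\sqrt{t_j}\geq\sqrt{\sum_{j\geq2}t_j}=\sqrt r$. Grouping all the remaining area into one component is therefore the worst case, and we obtain
\[
\sqrt{1-r}+\sqrt r\leq1+\Deficit .
\]
Before solving this, we check that $t_1>1/2$. If $t_1\leq1/2$, then $\sum_j\sqrt{t_j}\geq\sum_j t_j/\sqrt{t_1}\geq\sqrt2>1+\Deficit$, which is a contradiction. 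This is the one place where the hypothesis $\Deficit<\sqrt2-1$ enters, and we conclude $r<1/2$.

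\textbf{Solving for $r$.} Put $s=2\Deficit+\Deficit^2=(1+\Deficit)^2-1$. The hypothesis $\Deficit<\sqrt2-1$ gives $s\in[0,1)$. Squaring the two-term inequality gives $1+2\sqrt{r(1-r)}\leq(1+\Deficit)^2$, hence $r(1-r)\leq s^2/4$. The roots of $r^2-r+s^2/4=0$ are $\bigl(1\pm\sqrt{1-s^2}\bigr)/2$. Since $r<1/2$, $r$ must lie below the smaller root, which is exactly~\eqref{eq:sharp-component-concentration}. The asymptotic expansion then follows from
\[
\frac{1-\sqrt{1-s^2}}2=\frac{s^2}4+O(s^4)
\qquad\text{and}\qquad
s^2/4=\Deficit^2+O(\Deficit^3).
\]

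\textbf{Optimality.} Fix $r\in(0,1/2)$ and a minimizing direction $n_*$. Take $\C$ to be the closed half-plane with outer normal $n_*$. Let $E$ consist of two translated dilates of $K_{\tension,\lambda}(n_*)$, each with its truncating facet on $\partial\C$, of areas $(1-r)v$ and $rv$, placed far apart so that they are separate indecomposable components. By the equality case of Theorem~\ref{thm:unrestricted-planar}, each component attains its bound $2\sqrt{m_*|E_j|}$, so energy additivity gives exactly $\Deficit=\sqrt{1-r}+\sqrt r-1$. Both the subadditivity step and the squaring step are then equalities, and $r<1/2$ selects the smaller root, so~\eqref{eq:sharp-component-concentration} holds with equality. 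As $r\to0^+$ the deficit $\Deficit$ decreases to $0$ continuously, so every small value of $\Deficit$ is attained. Hence both the exponent $2$ and the leading constant $1$ are optimal.

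The only delicate point is choosing the correct root. This is handled by the a priori bound $t_1>1/2$; the rest is routine algebra.
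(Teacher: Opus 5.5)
Your proposal is correct and follows the paper's proof essentially step for step. You apply Theorem~\ref{thm:unrestricted-planar} componentwise with energy additivity, deduce $|E_1|>|E|/2$ from $\Deficit<\sqrt2-1$, reduce to $\sqrt{1-r}+\sqrt r\leq1+\Deficit$ by subadditivity of the square root, square and select the smaller root, and prove sharpness with two separated equality caps on a half-plane wall. Your bound $\sqrt{t_j}\geq t_j/\sqrt{t_1}$ simply makes explicit the step $t_1>1/2$, which the paper states without detail.
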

	\begin{proof}[Proof of Corollary~\ref{cor:sharp-component-concentration}]
		Energy additivity and Theorem~\ref{thm:unrestricted-planar} give $\sum_{j=1}^{\infty}\sqrt{|E_j|/|E|}\leq1+\Deficit(E)<\sqrt2$, hence $|E_1|>|E|/2$. Writing $r=|E\setminus E_1|/|E|<1/2$, we obtain $\sqrt{1-r}+\sqrt r\leq1+\Deficit(E)$. Squaring and solving for $r$ proves~\eqref{eq:sharp-component-concentration}.
		For sharpness, fix a minimizing direction and a half-plane obstacle with that outer normal. Place two disjoint equality caps on its boundary, separated tangentially, with areas $(1-r)v$ and $rv$, where $0<r<1/2$. Their deficit is exactly $\sqrt{1-r}+\sqrt r-1$, so equality holds in~\eqref{eq:sharp-component-concentration}.
	\end{proof}
	
	Small cap-volume excess need not imply angular proximity. The quadratic growth assumption below gives this implication; moving the cutting plane then changes the cap only in a thin strip, allowing an exactly minimizing direction.
	
	\begin{corollary}[Quantitative selection of
		minimizing directions]\label{cor:orientation-stability}
		Assume the hypotheses of Theorem~\ref{thm:unrestricted-planar-stability}. Let
		\[
		\mathcal N_{\tension,\lambda}
		:= \left\{n\in\mathbb{S}^1:\, m_{\tension,\lambda}(n) =
		m_{\tension,\lambda}^{\min}\right\}.
		\]
		Suppose that, for some $c_0>0$,
		\[
		m_{\tension,\lambda}(n) - m_{\tension,\lambda}^{\min}
		\geq
		c_0\operatorname{dist}(n,\mathcal N_{\tension,\lambda})^2
		\]
		for every $n\in\mathbb{S}^1$.
		
		Then there exist $\Deficit_0>0$ and $A>0$, depending on $\tension,\lambda,c_0$, such that, for  $\Deficit(E)\leq\Deficit_0$, one can find $n_*\in\mathcal N_{\tension,\lambda}$ and $x_0\in\R^2$ satisfying
		\[
		\frac{\left|E\triangle
			\left(x_0+\sqrt{(|E|/m_{\tension,\lambda}^{\min})}
			K_{\tension,\lambda}(n_*)\right)\right|}{|E|}
		\leq A\sqrt{\Deficit(E)}.
		\]
	\end{corollary}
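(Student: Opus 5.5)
Start from Theorem~\ref{thm:unrestricted-planar-stability}, which supplies a direction $n$ with $m_{\tension,\lambda}(n)/m_*-1\le C_{\mathrm{stab}}\Deficit$, a point $x_0$, and the bound
\[
\bigl|E\triangle(x_0+\sqrt{|E|/m_{\tension,\lambda}(n)}\,K_{\tension,\lambda}(n))\bigr|\le C_{\mathrm{stab}}|E|\sqrt{\Deficit}.
\]
The plan is to replace $n$ by a nearby exactly minimizing direction $n_*\in\mathcal N_{\tension,\lambda}$. We then control the symmetric difference between the two model caps at the normalized volumes $|E|/m_{\tension,\lambda}(n)$ and $|E|/m_*$. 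The conclusion follows from the triangle inequality.

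\emph{Choosing the direction.} The growth hypothesis gives
\[
\operatorname{dist}(n,\mathcal N_{\tension,\lambda})^2\le c_0^{-1}(m_{\tension,\lambda}(n)-m_*)\le c_0^{-1}C_{\mathrm{stab}}m_*\Deficit.
\]
Since $\mathcal N_{\tension,\lambda}$ is closed, we may choose $n_*\in\mathcal N_{\tension,\lambda}$ with $|n-n_*|\le C\sqrt{\Deficit}$, where $C$ depends on $\tension,\lambda,c_0$.

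\emph{Comparing caps.} We split the estimate into a rescaling error and a rotation error, each measured in the unit-volume normalization. For the rescaling error, $0\in K_{\tension,\lambda}(n)$ makes dilations nested. Changing the factor from $\sqrt{|E|/m_{\tension,\lambda}(n)}$ to $\sqrt{|E|/m_*}$ therefore costs at most
\[
|E|\bigl(m_{\tension,\lambda}(n)/m_*-1\bigr)\le C|E|\Deficit\le C|E|\sqrt{\Deficit}
\]
for $\Deficit\le1$. For the rotation error, we need
\[
|K_{\tension,\lambda}(n)\triangle K_{\tension,\lambda}(n_*)|\le L|n-n_*|.
\]
Both caps are $W$ intersected with half-planes $\{p\cdot n\ge\lambda\tension(-n)\}$. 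Their symmetric difference lies in $W\cap\{|p\cdot n-\lambda\tension(-n)|\le \delta\}$ up to a wedge region, with $\delta\le|p||n-n_*|+|\lambda||\tension(-n)-\tension(-n_*)|$. Here $W\subset B_{C_\tension}$ and $\tension$ is $C_\tension$-Lipschitz, so $\delta\le 2C_\tension|n-n_*|$. The strip's area is at most $\operatorname{diam}(W)\cdot 2\delta$, which gives $L=4C_\tension\operatorname{diam}W$. After scaling by $|E|/m_*$, this error is at most $C|E|\sqrt{\Deficit}$.

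\emph{Main obstacle.} The only real subtlety is the thin-strip estimate: moving the cutting line changes both its normal and its offset $\lambda\tension(-n)$. I would handle it by noting that $p\cdot n-\lambda\tension(-n)$ and $p\cdot n_*-\lambda\tension(-n_*)$ differ by at most $2C_\tension|n-n_*|$ on $W$. Hence any point lying in one cap but not the other satisfies $|p\cdot n-\lambda\tension(-n)|\le 2C_\tension|n-n_*|$. The slab of that width has area at most its width times $\operatorname{diam}W$; this is a pure Lipschitz bound and needs no strict convexity.

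\emph{Conclusion.} Combining the three estimates with the triangle inequality gives the claim with $A=C_{\mathrm{stab}}+C$. The restriction $\Deficit\le\Deficit_0$, say $\Deficit_0=1$, only serves to absorb $\Deficit$ into $\sqrt{\Deficit}$.
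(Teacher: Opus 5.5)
Your proposal is correct and follows essentially the same argument as the paper. The steps match: a near-minimizing direction from Theorem~\ref{thm:unrestricted-planar-stability}, quadratic growth to reach an exact minimizer $n_*$ with $|n-n_*|\le C\sqrt{\Deficit}$, a Lipschitz strip bound for $|K_{\tension,\lambda}(n)\triangle K_{\tension,\lambda}(n_*)|$, nested dilations for the change of normalizing factor, and the triangle inequality. Your explicit slab estimate with constant $4C_\tension\operatorname{diam}W$ supplies the detail that the paper only asserts.
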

	
	\begin{proof}[Proof of Corollary~\ref{cor:orientation-stability}]
		By Theorem~\ref{thm:unrestricted-planar-stability},  $\exists n\in\mathbb{S}^1$ so that
		$m_{\tension,\lambda}(n) - m_{\tension,\lambda}^{\min} \leq C\Deficit(E)$.
		The nondegeneracy assumption gives
		$\operatorname{dist}(n,\mathcal N_{\tension,\lambda}) \leq C\sqrt{\Deficit(E)}$.
		Choose $n_*\in\mathcal N_{\tension,\lambda}$ realizing this distance.
		
		Since $\wulff$ is bounded and $\tension$ is Lipschitz on the sphere, the defining cutting half-spaces of $K_{\tension,\lambda}(n)$ and $K_{\tension,\lambda}(n_*)$ differ inside $\wulff$ only in a strip whose width is bounded by $C|n-n_*|$. Consequently,
		$|K_{\tension,\lambda}(n)\,\triangle\, K_{\tension,\lambda}(n_*)| \leq C|n-n_*|$.
		The difference between the volume-normalizing dilation factors is controlled by
		$m_{\tension,\lambda}(n) - m_{\tension,\lambda}^{\min} \leq C\Deficit(E)$.
		
		Combining these estimates with the quantitative comparison of the preceding theorem and the triangle inequality for symmetric difference proves the assertion.
	\end{proof}
	
	\paragraph{A non-even example of direction selection.} Take $W=b+B_1\subset\R^2$, where $0<|b|<1$, so $\tension(q)=|q|+b\cdot q$. For $-1<\lambda\leq0$, writing $p=b+y$ gives
	\[
	m_{\tension,\lambda}(n)=\mathcal V(\tau(n)),\qquad
	\tau(n)=\lambda-(1+\lambda)b\cdot n,\qquad
	\mathcal V(t)=\arccos t-t\sqrt{1-t^2}.
	\]
	The cutting height $\tau(n)$ is largest when the cap area is smallest. Thus the translation $b$ selects a preferred direction through the wall law, although $W$ is a round disk.
	
	The range $I=[\lambda-(1+\lambda)|b|,\lambda+(1+\lambda)|b|]$ lies in $(-1,1)$. Since $\mathcal V'(t)=-2\sqrt{1-t^2}$, the minimizing direction is uniquely $n_*=-b/|b|$. Moreover,
	$\tau(n_*)-\tau(n)=(1+\lambda)|b||n-n_*|^2/2$.
	The mean value theorem therefore verifies the hypothesis of Corollary~\ref{cor:orientation-stability} with
	$c_0=(1+\lambda)|b|\min_{t\in I}\sqrt{1-t^2}>0$.
	This illustrates the dependence of direction selection on the anisotropy and adhesion, consistently with~\eqref{eq:wall-gauge-dependence}, despite the universal constant in Theorem~\ref{thm:unrestricted-planar-stability}.
	
	\subsection{Two parallel wetted caps at zero adhesion}
	When the wetted trace consists of two opposite caps cut by parallel hyperplanes, an intermediate plane separates them regardless of the exterior liquid connection. At zero adhesion its new faces have zero cost for the auxiliary half-Wulff support functions, so the sides can be filled and compared separately. This vanishing concerns the auxiliary perimeters, not the original tension.
	
	\begin{proposition}[Two parallel wetted caps]\label{prop:cc-parallel-caps-zero}
		Let $d\geq 2$, let $W$ be a convex body with $0\in\operatorname{int}W$, set $\tension=h_W$, and let $\C\subset\mathbb R^d$ be a convex body.  Fix $e\in\mathbb{S}^{d-1}$ and $s_-<s_+$ such that both hyperplanes $\Pi_{s_-}(e)$ and $\Pi_{s_+}(e)$ meet $\operatorname{int}\C$. Suppose that
		$E\subset\R^d\setminus\C$ has finite perimeter, $0<|E|<\infty$, and exterior trace satisfying $\calH^{d-1}$-almost everywhere on $\partial\C$
		\[
		T_E=\chi_{\Gamma_+\cup\Gamma_-},
		\qquad
		\Gamma_+=\partial\C\cap\H_{s_+}^+(e),
		\qquad
		\Gamma_-=\partial\C\cap\H_{s_-}^-(e).
		\]
		Set $n_+=e$, $n_-=-e$, $D_+=\C\cap\H_{s_+}^+(e)$, $D_-=\C\cap \H_{s_-}^-(e)$, $b_\pm=|D_\pm|$, $v_D=b_++b_-$, and $\rho_0=d(m_{\tension,0}^{\min})^{1/d}$. Then
		\begin{equation}\label{eq:cc-parallel-caps-volume}    
			\PerAni(E;\R^d\setminus\C)
			\geq\rho_0(|E|+v_D)^{(d-1)/d}>\rho_0|E|^{(d-1)/d}.
		\end{equation}
		No restriction is imposed on the exterior connection between the wetted caps. If $\tension$ is even, then $\rho_0=d(|W|/2)^{1/d}$.
	\end{proposition}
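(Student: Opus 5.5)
The plan is to cut along an intermediate hyperplane and apply the filling mechanism of Proposition~\ref{rev:prop:identity} separately on each side. On each side we use a half-Wulff support function that assigns zero cost to the new flat faces.

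Fix $s\in(s_-,s_+)$, chosen from the full-measure set of levels for which BV slicing applies to $\chi_E$. For such $s$, $\calH^{d-1}(\partial^*E\cap\Pi_s(e))=0$ and the restriction formula for $E\cap\H_s^\pm(e)$ holds. Put
\[
K_+:=K_{\tension,0}(e)=W\cap\{p\cdot e\ge0\},\qquad K_-:=K_{\tension,0}(-e)=W\cap\{p\cdot e\le0\}.
\]
These satisfy $h_{K_\pm}\le\tension$ and $h_{K_\pm}(\mp e)=0$. Define
\[
G_+:=(E\cap\H_s^+(e))\cup D_+,\qquad G_-:=(E\cap\H_s^-(e))\cup D_-.
\]
Since $s_-<s<s_+$, we have $D_\pm\subset\H_s^\pm(e)$, so $G_+$ and $G_-$ are disjoint. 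Because $E\cap\Pi_s(e)$ is null, $|G_+|+|G_-|=|E|+v_D$.

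Next we compute the distributional derivative of $\chi_{G_\pm}$, following the argument of Lemma~\ref{rev:lem:filling}. First extend $E$ by zero using Lemma~\ref{rev:lem:coercivity}. The restriction formula for $E\cap\H_s^+(e)$ produces three contributions.
\begin{itemize}
\item[(i)] The free boundary $\partial^*E\cap U\cap\H_s^+(e)$, with its own normal.
\item[(ii)] The exterior trace on $\partial\C\cap\H_s^+(e)$. By hypothesis it equals $\chi_{\Gamma_+}$ there, since $\Gamma_-$ lies in $\H_{s_-}^-(e)$.
\item[(iii)] A slice term on $\Pi_s(e)$ with normal $-e$.
\end{itemize}
Adding $\chi_{D_+}$ cancels the wetted part $\Gamma_+$ exactly as in geometry~1. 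It leaves the flat section $\C\cap\Pi_{s_+}(e)$, also with normal $-e$. All flat pieces of $\partial^*G_+$ therefore carry normal $-e$, where $h_{K_+}$ vanishes. This gives
\[
P_{h_{K_+}}(G_+)=P_{h_{K_+}}(E;U\cap\H_s^+(e))\le \PerAni(E;U\cap\H_s^+(e)),
\]
and symmetrically for $G_-$ with normal $+e$. I expect this accounting to be the main obstacle. One must check carefully that the traces from the dry band $s_-<x\cdot e<s_+$ contribute nothing. One must also check that the slice and the section are counted with the right orientation, and that they do not overlap the free boundary on a set of positive measure. Choosing generic $s$ and using mutual singularity of the measures should settle this.

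Finally we apply the full-space Wulff inequality~\eqref{v12:eq:hd-wulff} to each $G_\pm$, after translating $K_\pm$ by an interior point as in the proof of Proposition~\ref{rev:prop:identity}. Using $|K_\pm|\ge m^{\min}_{\tension,0}$, we get $P_{h_{K_\pm}}(G_\pm)\ge\rho_0|G_\pm|^{(d-1)/d}$; this holds trivially if $|G_\pm|=0$. The free perimeters on the two sides add up to $\PerAni(E;U)$ up to the null slice. By subadditivity of $t\mapsto t^{(d-1)/d}$,
\[
\PerAni(E;U)\ge\rho_0\bigl(|G_+|^{(d-1)/d}+|G_-|^{(d-1)/d}\bigr)\ge\rho_0(|E|+v_D)^{(d-1)/d}.
\]
The final strict inequality follows because $v_D>0$: both hyperplanes meet $\operatorname{int}\C$. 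For even $\tension$, $W=-W$, so each hyperplane through the origin bisects $W$. Hence every zero-adhesion cap has volume $|W|/2$, which gives $\rho_0=d(|W|/2)^{1/d}$.
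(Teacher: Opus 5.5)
Your proposal is correct and follows essentially the same route as the paper. Both cut along a generic intermediate hyperplane $\Pi_s(e)$ and use the half-Wulff caps $K_\pm=K_{\tension,0}(\pm e)$, whose support functions vanish on the artificial normals $\mp e$. Both then fill each side with $D_\pm$ via the geometry-1 accounting of Lemma~\ref{rev:lem:filling}, apply the translated Wulff inequality to each $G_\pm$, and finish with subadditivity of $t\mapsto t^{(d-1)/d}$. The only cosmetic difference is that your ``trivially if $|G_\pm|=0$'' case never arises, since $|G_\pm|\geq b_\pm>0$.
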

	
	See Figure~\ref{fig:parallel-wetted-caps} for the splitting and filling construction associated with the two parallel wetted caps.
	
	\begin{figure}[!htbp]
		\centering
		\scalebox{\CapFigureScale}{%
			\begin{minipage}{\textwidth}
				\centering
				\begin{tikzpicture}[cap panel]
					\CapPanel{a}
					\begin{scope}[shift={(3.05,2.80)},scale=1.18]
						\CapRing
						\draw[cap guide] (-2.05,.55) -- (2.35,.55);
						\draw[cap guide] (-2.05,-.55) -- (2.35,-.55);
						\draw[line width=1.05pt] ({sqrt(1-.55^2)},.55)
						arc (33.367013:146.632987:1);
						\draw[line width=1.05pt] ({-sqrt(1-.55^2)},-.55)
						arc (213.367013:326.632987:1);
						\node[cap label] at (0,0) {$\C$};
						\node at (1.43,0) {$E$};
					\end{scope}
					\node[cap label,anchor=west] at (5.85,3.45) {$\Pi_{s_+}(e)$};
					\node[cap label,anchor=west] at (5.85,2.15) {$\Pi_{s_-}(e)$};
					\draw[cap leader] (4.79,4.18) -- (3.71,3.78);
					\node[cap label,anchor=west] at (4.79,4.18) {$\Gamma_+$};
					\draw[cap leader] (4.79,1.41) -- (3.71,1.82);
					\node[cap label,anchor=west] at (4.79,1.41) {$\Gamma_-$};
					\draw[cap normal] (6.10,2.51) -- (6.10,3.08);
					\node at (6.34,2.96) {$e$};
					\node at (3.60,.35) {$\lambda=0,\qquad s_-<s_+$};
				\end{tikzpicture}\hfill
				\begin{tikzpicture}[cap panel]
					\CapPanel{b}
					\begin{scope}[shift={(3.05,2.80)},scale=1.18]
						\CapRing
						\begin{scope}
							\clip (-1.61,-1.61) rectangle (1.61,0);
							\path[cap secondary,even odd rule] (0,0) circle (1.6)
							plot[domain=0:360,samples=241]
							({capinner(\x)*cos(\x)},{capinner(\x)*sin(\x)}) -- cycle;
							\draw[cap edge] (0,0) circle (1.6);
							\draw[cap edge] plot[domain=0:360,samples=241]
							({capinner(\x)*cos(\x)},{capinner(\x)*sin(\x)}) -- cycle;
						\end{scope}
						\path[fill=white] ({sqrt(1-.55^2)},.55)
						arc (33.367013:146.632987:1) -- cycle;
						\path[cap added] ({sqrt(1-.55^2)},.55)
						arc (33.367013:146.632987:1) -- cycle;
						\path[fill=white] ({-sqrt(1-.55^2)},-.55)
						arc (213.367013:326.632987:1) -- cycle;
						\path[cap added] ({-sqrt(1-.55^2)},-.55)
						arc (213.367013:326.632987:1) -- cycle;
						\draw[cap guide] ({sqrt(1-.55^2)},.55)
						arc (33.367013:146.632987:1);
						\draw[cap guide] ({-sqrt(1-.55^2)},-.55)
						arc (213.367013:326.632987:1);
						\draw[cap edge] ({-sqrt(1-.55^2)},.55) -- ({sqrt(1-.55^2)},.55);
						\draw[cap edge] ({-sqrt(1-.55^2)},-.55) -- ({sqrt(1-.55^2)},-.55);
						\draw[cap guide] (-2.05,0) -- (2.35,0);
						\draw[line width=1.15pt] (-1.6,0) -- (-1.26,0);
						\draw[line width=1.15pt] (1.26,0) -- (1.6,0);
						\draw[cap normal] (-1.43,0) -- (-1.43,-.42);
						\draw[cap normal] (1.43,0) -- (1.43,.42);
						\node[cap label] at (-1.43,-.61) {$-e$};
						\node[cap label] at (1.43,.61) {$e$};
						\node[cap label] at (0,.78) {$D_+$};
						\node[cap label] at (0,-.78) {$D_-$};
						\node[cap label] at (0,0) {$\C$};
						\node at (0,1.30) {$G_+$};
						\node at (0,-1.30) {$G_-$};
					\end{scope}
					\node[cap label,anchor=west] at (5.85,2.80) {$\Pi_t(e)$};
					\node at (3.60,.35) {$h_{K_+}(-e)=h_{K_-}(e)=0$};
				\end{tikzpicture}
			\end{minipage}%
		}
		\caption{Two parallel wetted caps at $\lambda=0$
			(Proposition~\ref{prop:cc-parallel-caps-zero}).
			(a) The exterior connection is unrestricted. (b) Splitting at $\Pi_t(e)$ and adjoining the amber $D_\pm$ gives $G_\pm=E_t^\pm\cup D_\pm$. The artificial cutting faces have zero cost for $h_{K_\pm}$; the positive added volume makes the comparison strict.}
		\label{fig:parallel-wetted-caps}
	\end{figure}
	
	\begin{proof}[Proof of Proposition~\ref{prop:cc-parallel-caps-zero}]
		Write $U=\R^d\setminus\C$, $K_\pm=K_{\tension,0}(n_\pm)$ and $\rho_\pm=d|K_\pm|^{1/d}$. Since $n_+=e$ and $n_-=-e$,
		\[
		h_{K_+}(-e)=h_{K_-}(e)=0,\qquad h_{K_\pm}\leq\tension.
		\]
		For almost every $t\in(s_-,s_+)$, set $E_t^\pm:=E\cap\H_t^\pm(e)$. These restrictions have finite perimeter,
		$|D\chi_E|(\Pi_t(e))=0$, and the two traces of $\chi_E$ on $\Pi_t(e)$ agree almost everywhere. The BV product rule gives artificial boundaries with normals $-e$ and $e$, respectively. Their contributions to the corresponding support
		perimeters vanish, and therefore
		\[
		P_{h_{K_+}}(E_t^+;U)+P_{h_{K_-}}(E_t^-;U)
		\leq\PerAni(E;U).
		\]
		The remaining integrals are precisely those of $h_{K_+}(\nu_E)$ above the cut and $h_{K_-}(\nu_E)$ below it.
		
		The exterior traces of $E_t^\pm$ on $\partial\C$
		are $\chi_{\Gamma_\pm}$. Indeed, restriction multiplies the obstacle trace by $\chi_{\H_t^\pm(e)}$ away from $\Pi_t(e)$, and $\Pi_t(e)\cap\partial\C$ has zero $\calH^{d-1}$ measure because it is the
		relative boundary of a convex section through
		$\operatorname{int}\C$. Each restriction has positive volume, since its exterior trace is nonzero on a set of positive surface measure. The sets $G_\pm=E_t^\pm\cup D_\pm$ have finite perimeter and finite positive volume. By Lemma~\ref{rev:lem:filling} and the vanishing support values,
		\[
		P_{h_{K_\pm}}(G_\pm)=P_{h_{K_\pm}}(E_t^\pm;U),
		\qquad |G_\pm|=|E_t^\pm|+b_\pm.
		\]
		Applying the full-space Wulff inequality~\eqref{v12:eq:hd-wulff} gives
		\begin{equation}\label{eq:cc-parallel-caps-slice}
			\PerAni(E;U)
			\geq\rho_+(|E_t^+|+b_+)^{(d-1)/d}+\rho_-(|E_t^-|+b_-)^{(d-1)/d}.
		\end{equation}
		Translation of $K_\pm$ by an interior point justifies this application, exactly as in the proof of Proposition~\ref{rev:prop:identity}. Now $\rho_\pm\ge\rho_0$ and
		$x^{(d-1)/d}+y^{(d-1)/d}\ge(x+y)^{(d-1)/d}$ for $x,y\geq0$, proving the first inequality in~\eqref{eq:cc-parallel-caps-volume}. The second is strict because $v_D>0$. If $W=-W$, both half-space sections through the origin have volume $|W|/2$.
	\end{proof}
	
	Splitting comparable volumes creates a definite excess through the power $(d-1)/d$. Small deficit therefore forces almost all volume to one side of every intermediate cut. Continuity as the plane moves prevents that side from switching, yielding concentration in one outer lobe.
	\begin{corollary}[Concentration in one outer lobe]\label{cor:cc-parallel-caps-concentration}
		Under the hypotheses of Proposition~\ref{prop:cc-parallel-caps-zero}, put $v=|E|$, $U=\R^d\setminus\C$, and
		\[
		\Deficit_d(E)=\frac{\PerAni(E;U)}{\rho_0\,v^{(d-1)/d}}-1,
		\qquad \Theta_d:=\Theta_d(E).
		\]
		Define $a_+:=|E\cap \H_{s_+}^+(e)|$, $a_-:=|E\cap \H_{s_-}^-(e)|$, and $V:=|E\cap\{x:s_-<x\cdot e<s_+\}|$. If $\Deficit_d<2^{1/d}-1$, then
		\begin{equation}\label{eq:cc-parallel-caps-concentration}
			\frac{1}{v}(V+\min\{a_+,a_-\})
			\leq\left(\frac{\Deficit_d}{2-2^{(d-1)/d}}\right)^{d/(d-1)}.    
		\end{equation}
	\end{corollary}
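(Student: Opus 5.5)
The plan is to apply the slicing inequality~\eqref{eq:cc-parallel-caps-slice} from the proof of Proposition~\ref{prop:cc-parallel-caps-zero} at every admissible cut level. I will discard the added volumes $b_\pm$, which only strengthen the bound, and read the result as a lower bound on the splitting excess. Put $\alpha=(d-1)/d$, and for almost every $t\in(s_-,s_+)$ put $x_t=|E_t^+|$ and $y_t=|E_t^-|$, so that $x_t+y_t=v$. Using $\rho_\pm\geq\rho_0$ and dividing by $\rho_0v^\alpha$, the inequality~\eqref{eq:cc-parallel-caps-slice} gives
\[
1+\Deficit_d\geq f(s_t),\qquad f(s):=s^\alpha+(1-s)^\alpha,\qquad s_t:=\min\{x_t,y_t\}/v\in[0,1/2].
\]

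The key elementary step is a lower bound for $f$ on $[0,1/2]$. Set $c:=2-2^\alpha$, which lies in $(0,1)$ because $1<2^\alpha<2$. I claim
\[
f(s)-1\geq c\,s^\alpha\qquad\text{for } s\in[0,1/2].
\]
To see this, let $h(s):=(1-c)s^\alpha+(1-s)^\alpha-1$. It is concave, being a positive combination of concave functions minus a constant. It vanishes at $s=0$. At $s=1/2$ it also vanishes, since $(1-c)2^{-\alpha}+2^{-\alpha}=(2-c)2^{-\alpha}=1$. Concavity then gives $h\geq0$ on $[0,1/2]$.

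Combining the two displays yields, for almost every $t$,
\[
\frac{\min\{x_t,y_t\}}{v}\leq\varepsilon:=\left(\frac{\Deficit_d}{c}\right)^{1/\alpha}.
\]
The hypothesis $\Deficit_d<2^{1/d}-1=c\,2^{-\alpha}$ is exactly $\varepsilon<1/2$. So at every admissible level the smaller side carries strictly less than half of the volume.

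The step I expect to need the most care is showing that the small side cannot switch as $t$ varies. The map $t\mapsto x_t=|E\cap\H_t^+(e)|$ is defined for every $t$, is nonincreasing, and is continuous because hyperplanes are Lebesgue-null. If $x_{t_1}\leq\varepsilon v$ and $y_{t_2}\leq\varepsilon v$, then $x_{t_1}\leq\varepsilon v$ while $x_{t_2}\geq(1-\varepsilon)v$. Since $\varepsilon<1/2$, the intermediate value theorem gives a level strictly between them where $x_t$ lies in $(\varepsilon v,(1-\varepsilon)v)$. The same holds on a whole neighbourhood of that level by continuity, which contains admissible levels, and this is a contradiction. Hence either $x_t\leq\varepsilon v$ for almost every $t$, or $y_t\leq\varepsilon v$ for almost every $t$.

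In the first case, let $t\downarrow s_-$ along admissible levels. By monotone convergence, $x_t\to|E\cap\H_{s_-}^+(e)|=V+a_+$, where the slab and the upper lobe are separated by $\Pi_{s_+}(e)$, a null set. Therefore $V+\min\{a_+,a_-\}\leq V+a_+\leq\varepsilon v$. The second case is symmetric, letting $t\uparrow s_+$. In both cases we obtain~\eqref{eq:cc-parallel-caps-concentration}, since $\varepsilon=(\Deficit_d/(2-2^{(d-1)/d}))^{d/(d-1)}$.
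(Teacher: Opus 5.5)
Your proof is correct and follows the paper's argument essentially step for step. You apply the slice inequality \eqref{eq:cc-parallel-caps-slice} at almost every intermediate level, bound $s^{\alpha}+(1-s)^{\alpha}-1\geq(2-2^{\alpha})s^{\alpha}$ on $[0,1/2]$, use continuity of $t\mapsto|E\cap\H_t^+(e)|$ to rule out switching sides, and evaluate at the endpoint levels $s_\pm$; the only cosmetic difference is that you obtain the elementary bound by concavity of $(1-c)s^{\alpha}+(1-s)^{\alpha}-1$ vanishing at $0$ and $1/2$, whereas the paper reduces to the diagonal using monotonicity of $x^{\alpha}+y^{\alpha}-(x+y)^{\alpha}$ in the larger variable.
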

	
	\begin{proof}[Proof of Corollary~\ref{cor:cc-parallel-caps-concentration}]
		Put $u(t)=|E_t^+|/v$ and 
		\[
		F(x,y)=x^{(d-1)/d}+y^{(d-1)/d}-(x+y)^{(d-1)/d}.
		\]
		By~\eqref{eq:cc-parallel-caps-slice}, $\Deficit_d\geq F(u(t),1-u(t))$ for almost every intermediate $t$. Monotonicity of $F$ in each variable yields $F(x,y)\geq(2-2^{(d-1)/d})\min\{x,y\}^{(d-1)/d}$. Set $\varepsilon=(\Deficit_d/(2-2^{(d-1)/d}))^{d/(d-1)}<1/2$. By Fubini's theorem $u$ is absolutely continuous, so these estimates extend to $[s_-,s_+]$. Its image lies in $[0,\varepsilon]\cup[1-\varepsilon,1]$ and, by continuity, in just one of these intervals. Since $u(s_-)=1-a_-/v$ and $u(s_+)=a_+/v$, either $a_-\geq(1-\varepsilon)v$ or $a_+\geq(1-\varepsilon)v$. Using $V+a_++a_-=v$ proves the assertion.
	\end{proof}
	
	\section{The transported ABP framework}\label{rev:sec:transport}
	The Alexandrov--Bakelman--Pucci (ABP) method relates a geometric energy to the image of the lower contact set of an auxiliary Neumann solution. Cabr\'e--Ros-Oton--Serra~\cite{cabre2016sharp} developed this approach for sharp anisotropic inequalities in the full space and in convex cones. For exterior problems, Liu--Wang--Weng~\cite{liu2023relative} used restricted normal cones to obtain an ABP proof of the isotropic relative inequality, while Tu~\cite{tu2007anisotropic} treated anisotropic perimeter outside a Euclidean ball at zero adhesion.
	
	Fusco--Julin--Morini~\cite[Definition~2.6]{fusco2025isoperimetriccylinders} introduced the isotropic $\lambda$-ABP property and used it to treat capillarity outside convex cylinders. Fusco--Julin--Morini--Pratelli~\cite{fusco2026isoperimetric} then obtained comparison for arbitrary convex obstacles and all contact angles in $(0,\pi)$. Their geometric estimate bounds the volume of the boundary subdifferential selected by adhesion below by that of a spherical cap; see~\cite[Theorem~3.1]{fusco2026isoperimetric}.
	
	We ask whether this mechanism extends to anisotropic capillarity under the transport $p=D\Phi(q)$, where $\Phi=\tension^2/2$. The slope $q$ determines affine contact, while $p$ determines the Neumann flux and capillary cut; these variables coincide in the Euclidean case. The chosen transport also makes the flux affine on model Wulff caps.
	
	The analytic argument bounds the transported contact-image volume from above; the transported $\lambda$-ABP property supplies a geometric lower bound. Proposition~\ref{rev:thm:abp} combines them. Sections~\ref{rev:sec:ellipsoids} and~\ref{rev:sec:obstruction} examine this property, independently of the variational comparison in Section~\ref{sec:variat_comp_planar_stab}.

	\subsection{Wulff geometry}\label{v12:sec:geometry}
	The slope region is the unit sublevel of the support function; the target is the Wulff body, where volume and cap conditions are measured. The transport identifies these generally different sets.
	
	Set $Q_\tension=\{q:\,\tension(q)<1\}$ and retain $\Phi=\tension^2/2$. When $\tension$ is smooth away from the origin, we assume that $D^2\Phi$ is uniformly positive definite there. Then
	$\Phi\in C^{1,1}(\R^d)\cap C^2(\R^d\setminus\{0\})$.
	In general, $\Phi$ is not twice differentiable at the origin. Its gradient satisfies
	\[
	D\Phi(q)\cdot q=\tension(q)^2,
	\qquad \tension^\circ(D\Phi(q))=\tension(q),
	\qquad D\Phi(Q_\tension)=\operatorname{int}\wulff.
	\]
	For $q\neq0$, $D\tension(q)$ is the support point of $W$ in direction $q$. The identity $D\Phi(q)=\tension(q)D\tension(q)$ places its image on the radial level $\tension(q)$.
	
	Here $\tension^\circ$ is the polar gauge, equivalently the gauge of the Wulff body:
	\[
	\tension^\circ(p)
	:= \sup_{\tension(q)\le1}p\cdot q
	= \inf\{t>0:p\in t\,\wulff\}.
	\]
	Since directional cap volumes may differ, the reference cap must be chosen in a minimizing direction. Choose such a direction in~\eqref{v12:eq:min-cap} and denote it by $e_d$ after a simultaneous orthogonal change of coordinates. Thus
	$m_{\tension,\lambda}(e_d)=m_{\tension,\lambda}^{\min}$.
	This is a coordinate convention for the pair $(\tension,\C)$, not an
	assumption of rotational invariance. Write
	\[
	\wcap
	:=\wulff\cap\H_{\lambda\,\tension(-e_d)}^+(e_d),
	\qquad
	\H_\lambda
	:=\overline{\H_{\lambda\,\tension(-e_d)}^-(e_d)}.
	\]
	The closed cap $K_{\tension,\lambda}(e_d)$ and $\wcap$ agree up to a set of Lebesgue measure zero.
	\medskip
	
	\subsection{The quasilinear Neumann problem}\label{v12:sec:neumann}
	The auxiliary equation is chosen so that its integrated boundary flux is exactly the capillary energy. Its constant right-hand side therefore records energy per unit volume; this is the quantity that will enter the Jacobian estimate.
	
	For a bounded open set $\Omega\subset\R^d\setminus\C$ (its regularity is specified below), write
	\[
	\Sigma=\partial\Omega\cap(\R^d\setminus\C),\qquad
	\Gamma=\partial\Omega\cap\partial\C,\qquad
	\nu_\Gamma=-\nu_{\C}.
	\]
	With the outer normal of $\Omega$ on each boundary piece, we prescribe
	\begin{equation}\label{v12:eq:neumann}
		\begin{cases}
			\operatorname{div}(D\Phi(\nabla u))=\kappa,&\text{ in }\Omega,\\[0.1em]
			D\Phi(\nabla u)\cdot\nu=\tension(\nu),&\text{ on }\Sigma,\\[0.1em]
			D\Phi(\nabla u)\cdot\nu=-\lambda\,\tension(\nu),&\text{ on }\Gamma.
		\end{cases}
	\end{equation}
	Integrating the equation and using the boundary fluxes gives the compatibility condition
	\begin{equation}\label{v12:eq:compatibility}
		\kappa|\Omega|=\EnergyC(\Omega).
	\end{equation}
	
	Compatibility fixes total flux. Boundary contact additionally requires facewise flux tests and their appropriate combination at an edge, under the following regularity and geometric assumptions. Fix $\vartheta\in(0,1)$. Let $\C$ have $C^1$ boundary, and let $\Omega$ be a bounded connected Lipschitz domain whose free and wetted boundary pieces are $C^{2,\vartheta}$ away from their common edge $\gamma=\overline\Sigma\cap\overline\Gamma$. Assume that their unit normals extend continuously to $\gamma$ and that $\calH^{d-1}(\gamma)=0$.
	
	We call $\Omega$ a \emph{$\Phi$-regular set} if the compatible Neumann problem admits a solution
	\[
	u\in W^{1,2}(\Omega)
	\cap C^{1,1}_{\mathrm{loc}}(\Omega)
	\cap C^1(\overline\Omega)
	\]
	with the following properties. The flux conditions hold classically on the open boundary pieces, and the weak formulation is
	\[
	\begin{split}
		\int_\Omega D\Phi(\nabla u)\cdot\nabla\eta\,dx
		={}& \int_\Sigma\tension(\nu_\Sigma)\eta\,d\calH^{d-1}
		-\lambda\int_\Gamma\tension(\nu_\Gamma)\eta\,d\calH^{d-1}
		-\kappa\int_\Omega\eta\,dx
	\end{split}
	\]
	for every $\eta\in W^{1,2}(\Omega)$.
	
	At every edge point, assume all the edge hypotheses of Lemma~\ref{lem:touch}, including the local $C^1$ wedge representation and accumulation from both open boundary pieces. The lemma yields the boundary-touching condition: if an affine function $\ell$ of slope $q$ satisfies $\ell\leq u$ on $\overline\Omega$ and $\ell(x)=u(x)$ at $x\in\partial\Omega$, then
	\[
	\begin{cases}
		D\Phi(q)\cdot\nu_\Sigma(x)-\tension(\nu_\Sigma(x))
		\geq 0,
		&x\in\Sigma\setminus\gamma,\\[.25em]
		D\Phi(q)\cdot\nu_\Gamma(x)+\lambda\,\tension(\nu_\Gamma(x))
		\geq0,
		&x\in\Gamma\setminus\gamma,\\[.25em]
		\max\bigl\{
		D\Phi(q)\cdot\nu_\Sigma(x)-\tension(\nu_\Sigma(x)),
		\, D\Phi(q)\cdot\nu_\Gamma(x)+\lambda\,\tension(\nu_\Gamma(x))
		\bigr\} \geq0,
		&x\in\gamma.
	\end{cases}
	\]
	These regularity and edge requirements are the analytic hypothesis of Proposition~\ref{rev:thm:abp}; weak solvability alone does not supply them, and no general existence theorem with these properties is asserted.
	
	At a boundary minimum, the supporting slope differs from $\nabla u$ by an outward normal correction, or a nonnegative combination of the two normals at an edge. Flux monotonicity makes the corresponding weighted sum of boundary tests nonnegative, so both tests cannot be strictly negative.
	\begin{lemma}[Boundary touching]\label{lem:touch}
		Let $u$ be as above, with the flux conditions holding classically on $\Sigma\setminus\gamma$ and $\Gamma\setminus\gamma$. Assume that every point of $\gamma$ lies in the closure of both $\Sigma\setminus\gamma$ and $\Gamma\setminus\gamma$, and that near each $x\in\gamma$ there is a $C^1$-diffeomorphism $\Psi$ with $\Psi(x)=x$, $D\Psi(x)=I_d$, mapping $\overline{\Omega}$ locally onto the wedge $x+T_x$, where 
		\[
		T_x=\{w:\ w\cdot\nu_\Sigma(x)\leq 0,\ w\cdot\nu_\Gamma(x)\leq 0\}.
		\]    
		Then the boundary-touching condition holds.
	\end{lemma}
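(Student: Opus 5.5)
The argument is local and first-order. Set $\ell(y)=u(x)+q\cdot(y-x)$ and $w=u-\ell\geq0$ on $\overline\Omega$, with $w(x)=0$. Since $u\in C^1(\overline\Omega)$, $\nabla w(x)=\nabla u(x)-q$ is well defined, and $x$ is a minimum point of $w$ on $\overline\Omega$.

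\emph{Interior points of a boundary piece.} At $x\in\Sigma\setminus\gamma$ the boundary is $C^{2,\vartheta}$, so the tangential derivatives of $w$ vanish at $x$. Moreover $w$ cannot decrease into $\Omega$ along the inward normal. Hence $\nabla u(x)-q=-\mu\,\nu_\Sigma(x)$ for some $\mu\geq0$, that is, $\nabla u(x)=q-\mu\nu_\Sigma(x)$. Define $g(t):=D\Phi(q-t\nu_\Sigma)\cdot\nu_\Sigma$. Since $D\Phi$ is monotone ($\Phi$ is convex), $g$ is nonincreasing in $t$, so
\[
D\Phi(\nabla u(x))\cdot\nu_\Sigma=g(\mu)\leq g(0)=D\Phi(q)\cdot\nu_\Sigma .
\]
The classical flux condition gives $D\Phi(\nabla u(x))\cdot\nu_\Sigma=\tension(\nu_\Sigma)$, which yields the first inequality. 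The same argument with the flux $-\lambda\tension(\nu_\Gamma)$ gives the second. Monotonicity only needs $\langle D\Phi(a)-D\Phi(b),a-b\rangle\geq0$, which holds for every convex $C^1$ function, so differentiability at the origin is not required.

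\emph{Edge points.} Use the wedge chart $\Psi$. Because $D\Psi(x)=I_d$ and $w\in C^1$, the first-order condition for a minimum over $x+T_x$ states that $\nabla w(x)\cdot\xi\geq0$ for every $\xi\in T_x$. By Farkas' lemma, $\nabla w(x)$ lies in the dual cone of $T_x$, so
\[
\nabla u(x)=q-\alpha\nu_\Sigma(x)-\beta\nu_\Gamma(x)
\]
for some $\alpha,\beta\geq0$. The continuity of the normals and of $D\Phi(\nabla u)$ up to $\gamma$, together with accumulation from both open pieces, lets the classical flux identities pass to the limit at $x$:
\[
D\Phi(\nabla u(x))\cdot\nu_\Sigma=\tension(\nu_\Sigma),\qquad D\Phi(\nabla u(x))\cdot\nu_\Gamma=-\lambda\tension(\nu_\Gamma).
\]
Apply monotonicity with $a=q$ and $b=\nabla u(x)$:
\[
\bigl(D\Phi(q)-D\Phi(\nabla u(x))\bigr)\cdot(\alpha\nu_\Sigma+\beta\nu_\Gamma)\geq0 .
\]
Substituting the two limiting flux identities turns this into
\[
\alpha\bigl(D\Phi(q)\cdot\nu_\Sigma-\tension(\nu_\Sigma)\bigr)+\beta\bigl(D\Phi(q)\cdot\nu_\Gamma+\lambda\tension(\nu_\Gamma)\bigr)\geq0 .
\]
If $(\alpha,\beta)\neq0$, this nonnegative combination forces at least one bracket to be nonnegative, which is the max condition. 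This is the main delicate step.

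\emph{Degenerate case $\alpha=\beta=0$.} Then $\nabla u(x)=q$, and the limiting flux identities give both tests equal to zero directly. So the max condition holds in this case too, with no appeal to monotonicity.

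The main care is justifying the limiting flux identities at the edge and the Farkas-type multiplier representation through the $C^1$ chart. The chart argument works because $T_x$ is a closed convex polyhedral cone with dual cone $\{\alpha\nu_\Sigma+\beta\nu_\Gamma:\alpha,\beta\geq0\}$, and because first-order minimality transfers through $\Psi$ to directions in $T_x$ since $D\Psi(x)=I_d$.
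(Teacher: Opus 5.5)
Your proof is correct and follows essentially the same route as the paper. Both arguments use first-order minimality of $u-\ell$ in the tangent cone, the polar-cone (Farkas) representation of $q-\nabla u(x)$, and the monotonicity of $D\Phi$ paired with the flux identities, which are passed to the limit on $\gamma$. In the degenerate case both tests vanish.

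The paper additionally records that contact at the edge is transversal. Your argument correctly does not need that observation.
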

	
	\begin{proof}[Proof of Lemma~\ref{lem:touch}]
		By the assumed local wedge representation, $T_x$ is a convex wedge, reducing to a half-space if $\nu_\Sigma=\nu_\Gamma$. The case $\nu_\Sigma=-\nu_\Gamma$ would give $T_x$ empty interior, contradicting its local identification with the full-dimensional domain $\Omega$. The continuity of $\nabla u$ and of the normals ensures that both flux identities hold at $x\in\gamma$. If $\nu_\Sigma(x)=\nu_\Gamma(x)$ they would give $(1+\lambda)\tension(\nu_\Gamma)=0$, which is impossible; so contact is transversal.
		
		Let $\ell(y)=q\cdot y+b\leq u$ on $\overline\Omega$ with equality at $x\in\partial\Omega$. Since $u-\ell\in C^1$ up to the boundary and is minimal at $x$, $(\nabla u(x)-q)\cdot w\geq0$ for every $w$ in the tangent cone. Hence $q-\nabla u(x)$ lies in the polar cone: $q-\nabla u(x)=s\,\nu_\Sigma$ or $s\nu_\Gamma$ (for $s\geq 0$) on the open pieces, and $a\,\nu_\Sigma+b\,\nu_\Gamma$ (where $a,b\geq 0$) on $\gamma$. Monotonicity of $D\Phi$ and the flux identities give
		\[
		\begin{aligned}
			&a\big(D\Phi(q)\cdot\nu_\Sigma-\tension(\nu_\Sigma)\big)
			+b\big(D\Phi(q)\cdot\nu_\Gamma+\lambda\,\tension(\nu_\Gamma)\big)\\
			&\qquad=\big(D\Phi(q)-D\Phi(\nabla u(x))\big)\cdot\big(q-\nabla u(x)\big)\geq0,
		\end{aligned}
		\]
		and the analogous one-term inequalities on the open pieces. If all coefficients vanish, both tests equal $0$. Otherwise some test is nonnegative.
	\end{proof}
	
	For isotropic variational solutions, boundary touching follows from~\cite[Proposition~2.2]{fusco2026isoperimetric}. See also~\cite{dupuis1991oblique} for oblique problems with corners. The H\"older estimates for linear problems in~\cite{nittka2011regularity} do not supply the $C^1$ regularity assumed here.
	
	On model caps, the dual potential reverses the transport, producing a dilation with equal expansion in every direction. This saturates the determinant estimate and supplies model equality configurations in the analytic class.
	\begin{remark}[Model caps are $\Phi$-regular]\label{rem:modelcap}
		Let $\tension$ be smooth away from the origin, with $D^2\Phi$ uniformly positive definite there, and let $\C$ be a $C^1$-convex obstacle having a flat face with outer normal $n$, where $m_{\tension,\lambda}(n)=m^{\min}_{\tension,\lambda}$. Let $K=x_0+tK_{\tension,\lambda}(n)$ have its facet in that face, and write $\Phi^*=(\tension^\circ)^2/2$ for the Legendre transform of $\Phi$. Then $u(x)=t\Phi^*((x-x_0)/t)$ satisfies $D\Phi(\nabla u)=(x-x_0)/t$ and solves~\eqref{v12:eq:neumann} with $\kappa=d/t$. Moreover, $u\in C^{1,1}(\R^d)$ because $D\Phi^*=(D\Phi)^{-1}$ is Lipschitz, $D\Phi$ being strongly monotone. The smoothness and ellipticity assumptions give the required regularity of the free boundary. The edge is transversal and Lemma~\ref{lem:touch} applies, since $-1<\lambda<\tension(n)/\tension(-n)$. Thus the interior of $K$ is $\Phi$-regular. This holds for every such $\lambda$, including $\lambda\leq0$, and~\eqref{v12:eq:smooth-comparison} is attained in the class.
	\end{remark}
	
	\subsection{The transported ABP estimate}
	At points where an affine function supports $u$ from below, the Hessian is nonnegative. The equation controls the trace of the derivative of the transported gradient, and the arithmetic--geometric mean inequality then controls its determinant. This gives an upper bound for the image volume in terms of the capillary energy.
	
	Let $u$ be the auxiliary solution. By Lemma~\ref{rev:lem:coercivity} and compatibility, $\kappa>0$. Since $D\Phi$ is Lipschitz and $u\in C^{1,1}_{\mathrm{loc}}(\Omega)$, the map
	$T_u:=D\Phi(\nabla u)$
	is locally Lipschitz.
	
	Since $D^2\Phi(0)$ may not exist, first exclude zero gradients. Set $Z=\{x\in\Omega:\nabla u(x)=0\}$. On $Z$ we have $T_u=0$, and locality of weak derivatives gives $DT_u=0$ almost everywhere on $Z$. The equation $\operatorname{div}T_u=\kappa>0$ therefore implies $|Z|=0$. Consequently,
	\[
	DT_u=A D^2u,
	\qquad
	A=D^2\Phi(\nabla u),
	\]
	almost everywhere in $\Omega$.
	
	Define the lower contact set by
	\[
	\Omega^+
	=
	\left\{x\in\Omega:\, u(y)\ge u(x)+\nabla u(x)\cdot(y-x)\; \text{ for every }y\in\overline\Omega\right\}.
	\]
	At almost every point of $\Omega^+$, the Hessian $D^2u$ exists and is nonnegative. Hence
	\[
	S=A^{1/2}(D^2u)A^{1/2}\ge0,
	\qquad
	\det DT_u=\det S,
	\qquad
	\operatorname{tr}S=\kappa.
	\]
	The symmetric nonnegative matrix $S$ has the same trace and determinant as $DT_u$. Its eigenvalue product is maximal at equal eigenvalues. The area formula, by exhaustion, and the arithmetic--geometric mean inequality give
	\begin{equation}\label{rev:eq:abp-bound}
		|T_u(\Omega^+)|
		\leq\int_{\Omega^+}\det S\,dx
		\leq\left(\frac{\kappa}{d}\right)^d|\Omega^+|
		\leq\left(\frac{\kappa}{d}\right)^d|\Omega|.
	\end{equation}
	The Jacobian integral counts multiplicity, whereas image volume counts each point once; injectivity of $T_u$ is unnecessary. On a model cap, dilation attains both the image-volume and determinant bounds.
	
	\subsection{Restricted subdifferentials and the transported boundary image}\label{v12:sec:boundary-image}
	Restricted subdifferentials record slopes supporting the boundary data. The wall condition selects transported images whose slopes, by boundary touching, must be attained at interior contact points.
	
	For a nonempty compact set $\mathcal X\subset\partial\C$ and $\psi\in C(\mathcal X)$, define
	\[
	\partial_{\mathcal X}\psi(x)
	:=\{q\in\R^d:\,\psi(y)\geq\psi(x)+q\cdot(y-x)
	\;\text{ for every }y\in \mathcal X\}.
	\]
	Thus $q\in\partial_{\mathcal X}\psi(x)$ means that $x$ minimizes $\psi(y)-q\cdot y$ on $\mathcal X$. If $n\in\mathbb{S}^{d-1}$ is a supporting unit normal to $\C$ at $x$, then $n\cdot(y-x)\leq0$ on $\mathcal X$, and hence
	\begin{equation}\label{v12:eq:propagation}
		\partial_{\mathcal X}\psi(x)+[0,\infty)\,n\subset\partial_{\mathcal X}\psi(x).
	\end{equation}
	For finite data, a slope selects a minimum of $\psi(x_i)-q\cdot x_i$; changing the heights shifts the cell interfaces. When $\mathcal X=F=\{x_1,\ldots,x_N\}$, the cells $\mathcal A_i=\partial_F\psi(x_i)$ cover $\R^d$, with intersections contained in finitely many affine hyperplanes. This assertion concerns Lebesgue measure in slope space; it does not imply that the intersections have zero Alexandrov measure for a crystalline potential.
	Each cell carries its boundary point's supporting normal. Capture requires $p=D\Phi(q)$ to pass the cap test for an available cell, not merely to belong to some directional cap. For smooth $\tension$ and a specified continuous normal field $\mathbf n$ on $\mathcal X$, set
	\begin{equation}\label{v12:eq:boundary-image}
		\begin{aligned}    
			\mathscr B_{\tension,\lambda}^{\psi}&(\mathcal X,\mathbf n)\\
			&:=\bigcup_{x\in \mathcal X}
			\{D\Phi(q):q\in\partial_{\mathcal X}\psi(x),\,
			D\Phi(q)\in\operatorname{int}\wulff,\,
			D\Phi(q)\cdot\mathbf n(x)>\lambda\,\tension(-\mathbf n(x))\}.
		\end{aligned}
	\end{equation}
	For a $C^1$-obstacle we take $\mathbf n=\nu_{\C}$. For finite $F$ on a nonsmooth obstacle, each $n_i$ is a chosen supporting unit normal. The compact-set image is defined by~\eqref{v12:eq:boundary-image} itself, rather than by an unspecified discretization limit. For a general convex surface tension, replace the singleton $\{D\Phi(q)\}$ in~\eqref{v12:eq:boundary-image} by $\partial\Phi(q)$, retaining precisely those $p\in\partial\Phi(q)$ such that $p\in\operatorname{int}\wulff$ and $p\cdot\mathbf n(x)>\lambda\,\tension(-\mathbf n(x))$.
	
	A compact exhaustion with positive margins in the strict inequalities proves Borel measurability. Indeed, continuity of $\psi$ and convexity of $\Phi$ make
	\[
	\mathcal G=\{(x,q,p):\, x\in \mathcal X,\; q\in\partial_{\mathcal X}\psi(x),\; p\in\partial\Phi(q)\}
	\]
	closed. For positive integers $j,k$, intersect $\mathcal G$ with $|q|\leq j$, $p\in\wulff$, $\dist(p,\partial\wulff)\geq1/k$, and $p\cdot\mathbf n(x)-\lambda\,\tension(-\mathbf n(x))\geq1/k$. Continuity of the prescribed normal field makes this set compact. The transported image is the countable union of its compact $p$-projections, hence $F_\sigma$. The same argument applies to finite sets with the discrete topology. The graph formulation below gives the same finite image up to $\partial\wulff$.
	
	\paragraph{Nonsmooth transport and its graph measure.}\label{v12:sec:graph}
	For nonsmooth anisotropies, one slope can have several transported images, and only some of them may satisfy the cap test. We therefore record slope and image together. The resulting graph measure keeps track of the volume selected by each labeled cell while counting overlaps only once.
	
	Let $W=\wulff$ be a convex body and $\Phi=\tension^2/2$. The gauge of $W$ is $\rho_W=\tension^\circ$, so the Legendre transform is
	$\Phi^*(p)=(\tension^\circ(p))^2/2=\rho_W(p)^2/2$.
	Indeed, maximizing first on each level set of $\tension$ reduces the Legendre transform to $\sup_{r\geq0}\{r\rho_W(p)-r^2/2\}$. Convex duality gives $p\in\partial\Phi(q)$ if and only if $q\in\partial\Phi^*(p)$. Although one slope may have many images, almost every image has a unique inverse slope: local Lipschitz continuity of $\Phi^*$ gives $G_W=D\Phi^*$ almost everywhere. Consequently,
	\begin{equation}\label{v12:eq:inverse-image}
		|\partial\Phi(A)\cap W|
		=|\{p\in W:G_W(p)\in A\}|
	\end{equation}
	for every Borel set $A$, with completed Lebesgue measure understood.
	
	The Alexandrov measure is $\mu_\Phi(A)=|\partial\Phi(A)|$. For the cap restriction, retain both transport variables by defining the finite graph measure
	\begin{equation}\label{v12:eq:graph-measure}
		\gamma_\Phi
		:=(G_W,\operatorname{id})_\#(\mathcal L^d\mres W).
	\end{equation}
	Here $\#$ denotes pushforward: the measure of a set of pairs is the volume of those $p\in W$ for which $(G_W(p),p)$ belongs to that set. Its second marginal is Lebesgue measure on $W$, and its first is $A\mapsto|\partial\Phi(A)\cap W|$; retaining the pair allows the cap condition to be tested in the image variable. In particular, if
	$\mathcal C_{n,\lambda} :=W\cap \H_{\lambda\varphi(-n)}^+(n)$,
	then for any finite labeled family $\{(\mathcal A_i,n_i)\}_{i=1}^N$,
	\begin{equation}\label{v12:eq:labeled-graph}
		\left|\bigcup^N_{i=1}\bigl(\partial\Phi(\mathcal A_i)
		\cap\mathcal C_{n_i,\lambda}\bigr)\right|
		=\gamma_\Phi\left(\bigcup^N_{i=1} \mathcal A_i\times\mathcal C_{n_i,\lambda}\right).
	\end{equation}
	This is a union, so overlap is counted only once. No assumption on the Alexandrov measure of cell interfaces is needed.
	
	For $\mathcal A_i=\partial_F\psi(x_i)$, define
	\[
	\mathscr B_{\tension,\lambda}^{\psi}(F,\mathbf n)
	:=\bigcup^N_{i=1}\bigl(\partial\Phi(\mathcal A_i)\cap\mathcal C_{n_i,\lambda}\bigr).
	\]
	Equation \eqref{v12:eq:labeled-graph} gives its volume. The unrestricted finite inequality to be tested is
	\begin{equation}\label{v12:eq:crystalline-target}
		\left|\bigcup^N_{i=1}\bigl(\partial\Phi(\mathcal A_i)\cap\mathcal C_{n_i,\lambda}\bigr)\right|
		\geq m_{\tension,\lambda}^{\min}.
	\end{equation}
	The half-line property~\eqref{v12:eq:propagation} still holds in slope space. For smooth elliptic tensions, monotonicity of $D\Phi$ makes $D\Phi(q+tn)\cdot n$ increase along such a ray, but supplies no comparison of the volumes selected from different cells. The required lower bound therefore remains a separate geometric assertion.
	
	Existence of an admissible image does not justify counting a slope's entire Alexandrov mass. For $W=[-1,1]^2$ and $\tension(q)=|q_1|+|q_2|$, one has $\partial\Phi(r,0)=\{(r,s):|s|\leq r\}$ for $0<r<1$. Each slope has images on both sides of $p_2=0$. The whole segment of slopes has image area $1$, but only area $1/2$ satisfies $p_2>0$. Formula~\eqref{v12:eq:labeled-graph} retains this pointwise restriction.
	
	\subsection{The transported ABP property}\label{v12:sec:property}
	We introduce the \emph{transported $\lambda$-ABP property} as an anisotropic counterpart of the isotropic notion above. It requires the selected boundary image to have volume at least that of a smallest directional Wulff cap, uniformly over obstacles and arbitrary boundary data. This formulation allows the geometric assertion to be studied independently of the auxiliary equation.
	\begin{definition}\label{rev:def:abp}
		Let $-1<\lambda<\lambda_*(\tension)$. The Wulff body $\wulff$ satisfies the~\emph{transported $\lambda$-ABP property} if
		\begin{equation}\label{v12:eq:transported-property}
			|\mathscr B_{\tension,\lambda}^{\psi}(\mathcal X,\mathbf n)|
			\geq m_{\tension,\lambda}^{\min}
		\end{equation}
		for every $C^1$-convex obstacle, every nonempty compact $\mathcal X\subset\partial\C$, and every $\psi\in C(\mathcal X)$, with $\mathbf n=\nu_{\C}$, and also for every finite set on any convex obstacle with any choice of supporting unit normals.
	\end{definition}
	
	The compact-data estimate enters Proposition~\ref{rev:thm:abp} with $\mathcal X=\Gamma$ and $\psi=u|_\Gamma$, whereas finite configurations suffice for the rigidity and obstruction arguments of Section~\ref{rev:sec:obstruction}. Arbitrary boundary heights need not be traces of auxiliary Neumann solutions. Failure of this unrestricted property therefore does not, by itself, disprove either an estimate restricted to such traces or the capillary inequality.
	
	For finite configurations on nonsmooth obstacles, the interiors of the cells $\partial_F\psi(x_i)$ are disjoint open polyhedra covering $\R^d$ up to a null set and satisfying the half-line property~\eqref{v12:eq:propagation}.  This is the setting of the remark at the end of~\cite[Section~1.1]{fusco2026isoperimetric} and of the proof of~\cite[Theorem~3.1] {fusco2026isoperimetric}.
	
	The minimum-direction constant is necessary. For a singleton $\mathcal X=\{x\}$, the restricted subdifferential is all of $\R^d$, so its image is exactly the directional cap associated with $\mathbf n(x)$. Consequently, an unrestricted estimate with a fixed reference direction $n_0$ can hold only if $m_{\tension,\lambda}(n_0)=m_{\tension,\lambda}^{\min}$.
	
	\paragraph{Geometry behind validity and failure.}
	For a centered ellipsoidal Wulff body, $\Phi$ is quadratic and $D\Phi$ is linear. Dual affine changes of the slope and image variables then normalize the cells and the cap cuts simultaneously, recovering the Euclidean estimate (Proposition~\ref{v12:prop:ellipsoid-transport}). This compatibility explains why the cap test is imposed after transport; imposing it directly on slopes leads to the failure described in Remark~\ref{rem:4.3}.
	
	A facet gives a different mechanism. Its pyramid with vertex at the origin has positive volume, but $G_W=D\Phi^*$ maps its interior to the ray generated by the facet normal. Thus a negligible set of slopes can represent a substantial volume of image points. In the two-point configurations of Theorem~\ref{rev:thm:facet}, with points $0,v$, normals $n,-n$, equal heights and $\lambda=0$, capture away from the shared cell interface is equivalent, up to the cap hyperplane, to $(G_W(p)\cdot v)(p\cdot n)<0$. The sign of the first factor assigns the cell normal; the second decides whether the image passes that normal's cap test. Small positive tilts can assign the opposite facet pyramids to the wrong normals, excluding a positive volume. Setting the tilt equal to zero can instead put the inverse slopes on the shared interface, where both labels are available. The argument therefore keeps positive tilts while taking volume limits, as detailed in Section~\ref{rev:sec:obstruction}. A strict deficit is realized on a smooth strictly convex obstacle.
	
	For smooth elliptic tensions, the transport is invertible, but it changes the relative position of cell interfaces and cap cuts and weights slope volume by $\det D^2\Phi$. These effects need not balance.
	
	For smooth even tensions at zero adhesion, varying the height difference of two boundary points moves the interface between their slope cells. In the thin-slab limit, a configuration and its reflection select complementary parts of the Wulff body, so both can meet the half-volume bound only by exact balance. Requiring this for every height and direction forces balance on individual slope fibers, with the transport density above, and ultimately $D^3\Phi=0$ (Theorem~\ref{thm:two-point-rigidity}).
	
	The quartic perturbations in Appendix~\ref{app:perturbation} exhibit the imbalance explicitly at prescribed adhesion levels: after a suitable rotation, the selected volume decreases relative to the minimum cap volume. Figure~\ref{fig:smooth-transport-failure} illustrates this loss of captured volume despite smoothness and uniform ellipticity.
	
	\subsection{Conditional transported comparison}\label{v12:sec:abp-theorem}
	Slide a selected affine function vertically until it touches $u$ on $\overline\Omega$. The slope's image in $\operatorname{int}W$ excludes free-boundary contact; its strict cap condition excludes wetted contact, including the edge. Contact is therefore interior. The geometric lower bound and~\eqref{rev:eq:abp-bound} then give the comparison for the $\Phi$-regular class. For ellipsoids, Corollary~\ref{v12:cor:ellipsoids} separately treats all finite-perimeter competitors.
	
	Write $m=m^{\min}_{\tension,\lambda}=|\wcap|$. For $v>0$, set $t_v=(v/m)^{1/d}$, $\wcap[v]=t_v\wcap$, and $\H_{\lambda,v}=t_v\H_\lambda$. Then $|\wcap[v]|=v$, its facet lies on $\partial\H_{\lambda,v}$, and the model-cap computation gives
	\begin{equation}\label{v12:eq:model-scaling}
		\E_{\tension,\lambda,\H_{\lambda,v}}(\wcap[v])=d m^{1/d}v^{(d-1)/d}.
	\end{equation}
	The simultaneous dilation in~\eqref{v12:eq:model-scaling} leaves the adhesion coefficient fixed.
	\begin{proposition}[Conditional transported comparison]\label{rev:thm:abp}
		Assume $-1<\lambda<\lambda_*(\tension)$, that $\tension$ is smooth away from the origin, $D^2(\tension^2/2)$ is uniformly positive definite there, and $\wulff$ satisfies the transported $\lambda$-ABP property. Let $\C$ be a $C^1$-convex obstacle. Every $\Phi$-regular competitor $\Omega\subset\R^d\setminus\C$ satisfies
		\begin{equation}\label{v12:eq:smooth-comparison}
			\EnergyC(\Omega)
			\geq d(m_{\tension,\lambda}^{\min})^{1/d}|\Omega|^{(d-1)/d}.
		\end{equation}
	\end{proposition}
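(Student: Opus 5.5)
The plan is the standard ABP sandwich. The upper bound on the image comes from \eqref{rev:eq:abp-bound}. The lower bound is supplied by the transported $\lambda$-ABP property, applied with $\mathcal X=\Gamma$ and $\psi=u|_\Gamma$. The link between them is the inclusion
\[
\mathscr B_{\tension,\lambda}^{u|_\Gamma}(\Gamma,\nu_\C)\subset T_u(\Omega^+).
\]

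\textbf{Step 1.} First the degenerate case. If $\Gamma$ is empty or $\calH^{d-1}$-null, there is no wetted contact. Then the energy is the full anisotropic perimeter, and the Wulff inequality gives $d|W|^{1/d}|\Omega|^{(d-1)/d}$, which dominates the claimed bound. Otherwise $\Gamma$ is a nonempty compact subset of $\partial\C$ and $u|_\Gamma$ is continuous, since $u\in C^1(\overline\Omega)$. So Definition~\ref{rev:def:abp} applies and gives
\[
|\mathscr B|\geq m:=m^{\min}_{\tension,\lambda}.
\]

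\textbf{Step 2: the inclusion.} Take $p=D\Phi(q)\in\mathscr B$. Then $q\in\partial_\Gamma u(x_0)$ for some $x_0\in\Gamma$, with $p\in\operatorname{int}W$ and $p\cdot\nu_\C(x_0)>\lambda\tension(-\nu_\C(x_0))$. Consider the affine functions of slope $q$ and lower them until they touch $u$ from below on the compact set $\overline\Omega$; call the touching point $\bar x$.
\begin{itemize}
\item \emph{Free boundary.} If $\bar x\in\Sigma\setminus\gamma$, Lemma~\ref{lem:touch} gives $D\Phi(q)\cdot\nu_\Sigma\geq\tension(\nu_\Sigma)$, so $\tension^\circ(p)\geq1$. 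This contradicts $p\in\operatorname{int}W$.
\item \emph{Wetted boundary.} If $\bar x\in\Gamma\setminus\gamma$, Lemma~\ref{lem:touch} gives $p\cdot\nu_\Gamma+\lambda\tension(\nu_\Gamma)\geq0$ at $\bar x$. Here $\nu_\Gamma=-\nu_\C(\bar x)$, so this reads $p\cdot\nu_\C(\bar x)\leq\lambda\tension(-\nu_\C(\bar x))$. The cap test, however, is only known at $x_0$. To transfer it, I would use that the touching point on $\Gamma$ minimizes $u-q\cdot y$ over $\Gamma$, so $q\in\partial_\Gamma u(\bar x)$ as well, and $\bar x$ could replace $x_0$. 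That only helps if the cap test holds at every minimizer, which needs more care.
\item \emph{Edge.} At $\bar x\in\gamma$, the max condition of Lemma~\ref{lem:touch} forces one of the two tests above to hold, so both previous exclusions apply.
\end{itemize}
Hence $\bar x\in\Omega$, where $\nabla u(\bar x)=q$, so $\bar x\in\Omega^+$ and $p=T_u(\bar x)$.

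\textbf{Main obstacle.} The wetted-contact exclusion is the step I expect to be hardest. My approach is to read the definition \eqref{v12:eq:boundary-image} as allowing any base point. For fixed $q$, the minimizers of $u-q\cdot y$ on $\Gamma$ form a set. If $u-q\cdot y$ restricted to $\Gamma$ attains its global minimum over $\overline\Omega$ at some point $x$ of $\Gamma$, then the inequality $u\geq u(x)+q\cdot(y-x)$ holds on $\Gamma$, so $x$ is an admissible base point. To make the argument work, I would restrict the images to slopes whose minimizing point on $\Gamma$ is unique; this excludes only a null set of $p$, because $D\Phi$ is a diffeomorphism and the cell interfaces are null. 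For a unique minimizer, $\bar x\in\Gamma$ forces $\bar x=x_0$, and the contradiction follows. Strictness of the cap inequality handles the edge case in the same way.

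\textbf{Step 3: conclusion.} Combining the steps with \eqref{rev:eq:abp-bound} and \eqref{v12:eq:compatibility},
\[
m\leq |T_u(\Omega^+)|\leq\Bigl(\frac{\EnergyC(\Omega)}{d|\Omega|}\Bigr)^d|\Omega|.
\]
Rearranging gives \eqref{v12:eq:smooth-comparison}.
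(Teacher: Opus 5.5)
Your proof is correct and follows the paper's structure: the degenerate case by the Wulff inequality, the inclusion $\mathscr B\subset T_u(\Omega^+)$ via boundary touching, then the Jacobian bound and compatibility. The one divergence is the wetted-contact step, where you work harder than necessary.

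The paper notes that $x_0$ minimizes $u-q\cdot y$ over $\Gamma\subset\overline\Omega$. So if the minimum over $\overline\Omega$ is attained at some $\bar x\in\Gamma$, then $\min_\Gamma=\min_{\overline\Omega}$, and this value is also attained at $x_0$. The same affine function therefore touches $u$ at $x_0$. There the strict cap test contradicts the boundary-touching condition, together with $p\in\operatorname{int}W$ if $x_0\in\gamma$. In other words, instead of transferring the cap test from $x_0$ to $\bar x$, you move the contact point to $x_0$. This gives the inclusion for every $p\in\mathscr B$, with no exceptional set.

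Your alternative, discarding the set $N$ of slopes with several minimizers on $\Gamma$, also works. However, it gives the inclusion only off $D\Phi(N)$, and $|N|=0$ is not immediate for compact data as it is for finite data. You need two further facts. First, each minimizer $y^*$ of $u-q\cdot y$ on $\Gamma$ is a subgradient at $q$ of the convex function $g(q)=\max_{y\in\Gamma}\bigl(q\cdot y-u(y)\bigr)$. Hence $N$ lies in the non-differentiability set of $g$, which is null by Rademacher. Second, the Lipschitz map $D\Phi$ sends null sets to null sets. Your remark that ``the cell interfaces are null'' leaves this unproved. With it supplied your argument is complete, but it costs an extra measure-theoretic lemma that the paper's pointwise observation avoids, and gains nothing in return.
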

	
	\begin{proof}[Proof of Proposition~\ref{rev:thm:abp}]
		If \(\calH^{d-1}(\Gamma)=0\), the energy is \(\PerAni(\Omega)\), and the Wulff inequality gives the result, since \(m^{\min}_{\tension,\lambda}\le |\wulff|\). Otherwise, let $u$ be the auxiliary solution and set
		$\mathscr B=\mathscr B_{\tension,\lambda}^{u|_\Gamma}(\Gamma,\nu_{\C})$.
		We first show that $\mathscr B\subset T_u(\Omega^+)$. Choose $p=D\Phi(q)\in \mathscr B$ and a point $x_0\in\Gamma$ for which $q\in\partial_\Gamma(u|_\Gamma)(x_0)$ and $p\cdot\nu_{\C}(x_0)>\lambda\,\tension(-\nu_{\C}(x_0))$. The function $u-q\cdot x$ has its minimum on $\Gamma$ at $x_0$. This restricted minimum is why the normal attached to the selected cell can be used: if the global minimum on $\overline\Omega$ were attained anywhere on $\Gamma$, it would also be attained at $x_0$. At this point the wet-boundary test is strictly negative:
		$p\cdot\nu_\Gamma(x_0)+\lambda\,\tension(\nu_\Gamma(x_0))<0$.
		If $x_0$ lies on the edge, the free-boundary test is also strictly negative, since $p\in\operatorname{int}\wulff$. Both possibilities contradict the boundary-touching condition. At any point of $\Sigma\setminus\gamma$, the free-boundary test is strictly negative for the same reason. Thus the global minimum is attained in $\Omega$. There $q=\nabla u$ and the point belongs to $\Omega^+$, proving the inclusion.
		
		The transported property and~\eqref{rev:eq:abp-bound} give
		\[
		0<m
		\leq |\mathscr B|
		\leq |T_u(\Omega^+)|
		\leq \left(\frac{\kappa}{d}\right)^d|\Omega|.
		\]
		Taking a $d$-th root and using $\kappa|\Omega|=\EnergyC(\Omega)$ proves the asserted comparison.
	\end{proof}
	
	\section{Ellipsoids}\label{rev:sec:ellipsoids}
	For ellipsoids, a linear change of spatial variables converts both surface energies into their Euclidean counterparts, giving the comparison for all finite-perimeter competitors without auxiliary Neumann regularity. Dual changes in slope and image variables separately verify the transported boundary-image property.
	
	Let $\M$ be a symmetric and positive definite matrix, put $L=\M^{1/2}$, and set
	\[
	W_\M=\{p:p\cdot \M^{-1}p\leq1\}=L\,\overline{B_1},
	\qquad \tension_\M(\nu)=\sqrt{\nu\cdot\M\nu}.
	\]
	Then $\Phi_\M(q)=\tfrac12q\cdot\M q$ and $D\Phi_\M(q)=\M q$. The following comparison follows from~\cite[Theorem~1.1]{fusco2026isoperimetric}, including unbounded convex obstacles and finite-perimeter competitors.
	
	\begin{corollary}[Affine capillary comparison]\label{v12:cor:ellipsoids}
		Let $-1<\lambda<1$, let $\C$ be a convex obstacle, and let $E\subset\R^d\setminus\C$ have finite perimeter and $0<|E|<\infty$. Then
		\[
		\E_{\tension_\M,\lambda,\C}(E)
		\geq d\bigl((\det L)|B_1\cap\{y:\,y_d>\lambda\}|\bigr)^{1/d}
		|E|^{(d-1)/d}.
		\]
		Equality holds precisely when, up to a null set, $E=x_0+tK_{\tension_\M,\lambda}(n)$ for some $t>0$ and $n\in\mathbb{S}^{d-1}$, with the entire truncating facet contained in a supporting face of $\C$ with outer normal $n$.
	\end{corollary}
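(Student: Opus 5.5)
The plan is to reduce everything to the isotropic comparison of Fusco--Julin--Morini--Pratelli~\cite[Theorem~1.1]{fusco2026isoperimetric} by the linear change of variables $y=L^{-1}x$, where $L=\M^{1/2}$. For a finite-perimeter set $E$, put $E'=L^{-1}E$ and $\C'=L^{-1}\C$. Then $\C'$ is again a proper closed convex set with nonempty interior, $E'\subset\R^d\setminus\C'$, and $|E'|=|E|/\det L$.

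The first step is the transformation law for anisotropic surface integrals. Under the linear map $x=Ly$, the outer normal and area element satisfy
\[
\nu_E\,d\calH^{d-1}_x=(\det L)\,L^{-T}\nu_{E'}\,d\calH^{d-1}_y,
\]
valid on reduced boundaries; one can verify this through the distributional derivative of $\chi_{E'}=\chi_E\circ L$. Since $\tension_\M$ is one-homogeneous and $\tension_\M(L^{-T}\nu)=\sqrt{\nu\cdot L^{-1}\M L^{-1}\nu}=|\nu|$, it follows that
\[
\P_{\tension_\M}(E;\R^d\setminus\C)=(\det L)\,\P(E';\R^d\setminus\C').
\]
The same law holds for the wall term. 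Applying it to the obstacle boundary gives $\tension_\M(-\nu_\C)\,d\calH^{d-1}=(\det L)\,d\calH^{d-1}$ on $\partial\C'$. The exterior traces also correspond: $T_E\circ L=T_{E'}$, because traces are defined by one-sided Lebesgue densities, which are preserved by linear maps. Consequently
\[
\E_{\tension_\M,\lambda,\C}(E)=(\det L)\,\E_{|\cdot|,\lambda,\C'}(E').
\]

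The second step is to apply the isotropic theorem. For $-1<\lambda<1$ and arbitrary convex obstacles, including unbounded ones, it gives
\[
\E_{|\cdot|,\lambda,\C'}(E')\ge d\,|B_1\cap\{y_d>\lambda\}|^{1/d}|E'|^{(d-1)/d}.
\]
Multiplying by $\det L$ and substituting $|E'|=|E|/\det L$ yields the constant $(\det L)^{1/d}|B_1\cap\{y_d>\lambda\}|^{1/d}$, which is the stated bound.

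The third step is the equality case. By~\cite{fusco2026isoperimetric}, equality holds exactly when $E'=y_0+tB_1\cap\{y\cdot n'\ge\lambda+y_0\cdot n'\}$, with its flat facet on a supporting face of $\C'$ with outer normal $n'$. I have to check that the image under $L$ is a translated dilate of $K_{\tension_\M,\lambda}(n)$ with $n=L^{-1}n'/|L^{-1}n'|$. The ball maps to $W_\M$. The half-space $\{y\cdot n'\ge\lambda\}$ maps to $\{x\cdot L^{-1}n'\ge\lambda\}$, which is $\{x\cdot n\ge\lambda/|L^{-1}n'|\}$. On the other hand, $\tension_\M(-n)=|Ln|=1/|L^{-1}n'|$, so the cut height is exactly $\lambda\,\tension_\M(-n)$, as in~\eqref{v12:eq:cap}. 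Supporting faces map to supporting faces, and their normals transform by $L^{-T}=L^{-1}$, which gives the normal $n$. Running the same computation backwards shows that every such configuration attains equality. In particular, all directional cap volumes equal $(\det L)|B_1\cap\{y_d>\lambda\}|$, which is consistent with the absence of any minimizing-direction restriction in the statement.

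The main obstacle is justifying the transformation law at the level of finite-perimeter sets and exterior $BV$ traces on a possibly unbounded obstacle. I would handle this with the Gauss--Green characterization: pushing forward $D\chi_E$ under $L$ gives $D\chi_{E'}=(\det L)^{-1}L^{T}(D\chi_E)\circ L$ in distributional form. Together with the zero-extension formula from Lemma~\ref{rev:lem:coercivity}, which splits $D\chi_E$ into its part in $U$ and its trace part on $\partial\C$, this identifies both terms simultaneously. The normal relation is then read off from polar decomposition.
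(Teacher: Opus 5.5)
Your proposal is correct and follows the paper's own argument. Both use the linear substitution $x=Ly$. Both cancel the anisotropic density against the area factor through $\tension_\M(L^{-\mathsf T}\nu)=|\nu|$, on the free boundary and on the wall term. Both then apply the Euclidean comparison of Fusco--Julin--Morini--Pratelli and transfer its equality case via $n=L^{-1}n'/|L^{-1}n'|$ with $\tension_\M(-n)=1/|L^{-1}n'|$.

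One small slip: the Euclidean equality set should read $E'=y_0+t\bigl(B_1\cap\{y\cdot n'\ge\lambda\}\bigr)$. Its cut therefore sits at height $y_0\cdot n'+t\lambda$, not $y_0\cdot n'+\lambda$. Your subsequent computation already uses the correctly scaled unit cap, so nothing downstream is affected.
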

	
	\begin{proof}[Proof of Corollary~\ref{v12:cor:ellipsoids}]
		Set $\widetilde E=L^{-1}E$ and $\widetilde\C=L^{-1}\C$. The linear area formula gives
		\[
		\nu_E(Ly)=\frac{L^{-\mathsf T}\nu_{\widetilde E}(y)}
		{|L^{-\mathsf T}\nu_{\widetilde E}(y)|},
		\qquad d\calH^{d-1}(Ly)
		=(\det L)\,|L^{-\mathsf T}\nu_{\widetilde E}(y)|\,d\calH^{d-1}(y).
		\]
		Since $\tension_\M(\xi)=|L\xi|$ and $L=L^{\mathsf T}$, the density cancels the normal-dependent area factor. Both surface terms acquire $\det L$, leaving $\lambda$ unchanged:
		\[
		\E_{\tension_\M,\lambda,\C}(E)
		=(\det L)\,\E_{|\cdot|,\lambda,\widetilde\C}(\widetilde E),
		\qquad |E|=(\det L)\,|\widetilde E|.
		\]
		The Euclidean inequality proves the bound. Its equality case gives a spherical cap on a flat supporting face. Under $x=Ly$, a cap with unit cutting normal $\omega\in\mathbb{S}^{d-1}$ becomes a cap of $W_\M$ with
		$n=L^{-\mathsf T}\omega/|L^{-\mathsf T}\omega|$.
		Indeed, $\tension_\M(-n)=|Ln|=1/|L^{-\mathsf T}\omega|$. This transforms both the truncation level and the supporting face as asserted. Conversely, every such ellipsoidal cap transforms to a Euclidean equality configuration.
	\end{proof}
	
	To verify the transported property, we now normalize the boundary cells and apply the following Euclidean estimate.
	
	\paragraph{Euclidean finite partition estimate.}
	We use the following precise form of the geometric estimate in Fusco--Julin--Morini--Pratelli~\cite[proof of Theorem~3.1, (3.13)--(3.15)]{fusco2026isoperimetric}, in arXiv version~3. Let $U_1,\ldots,U_N$ be disjoint open sets covering $\R^d$ up to a null set, and let $n_i\in\mathbb S^{d-1}$. Assume
	\[
	\calH^{d-1}(\partial U_i\cap\partial B_r)=0\quad(r>0),
	\qquad U_i+[0,\infty)\,n_i\subset U_i.
	\]
	For $-1<\lambda<1$, their partition estimate gives
	\[
	\left|\bigcup_{i=1}^N\{p\in U_i\cap B_1:p\cdot n_i>\lambda\}\right|
	\geq |B_1\cap\{p:p_d>\lambda\}|.
	\]
	At $\lambda=0$, reflection $p\mapsto p-2(p\cdot n_i)n_i$ moves a negative-side point forward along an allowed ray, remaining in its cell and preserving the ball and volume. Thus each cell contributes at least half its volume, and summation gives the half-ball bound. For nonzero adhesion we use the cited partition estimate.
	
	Slopes transform dually to positions, preserving affine pairings; images transform with the Wulff body. The quadratic potential identifies the two normalized variables, converting cell inequalities and cap cuts simultaneously to their Euclidean forms.
	\begin{proposition}[Affine transport identity]\label{v12:prop:ellipsoid-transport}
		The ellipsoid $W_\M$ satisfies Definition~\ref{rev:def:abp} for every $-1<\lambda<1$.
	\end{proposition}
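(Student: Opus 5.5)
We reduce Definition~\ref{rev:def:abp} for $W_\M$ to the Euclidean finite partition estimate by a pair of dual linear changes of variables. Write $L=\M^{1/2}$, so that $D\Phi_\M(q)=\M q=L^2q$. In slope space we set $\tilde q=Lq$, and in image space $\tilde p=L^{-1}p$. Then $p=D\Phi_\M(q)$ becomes $\tilde p=\tilde q$, so the transport is the identity in normalized variables. Moreover, $p\in\operatorname{int}W_\M$ if and only if $|\tilde p|<1$, and $|\mathscr B|=(\det L)\,|\widetilde{\mathscr B}|$. Since $m^{\min}_{\tension_\M,\lambda}=(\det L)\,|B_1\cap\{p_d>\lambda\}|$, it suffices to show that the normalized selected set has volume at least that of the Euclidean cap. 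This identity for $m^{\min}$ comes from the cap transformation in the proof of Corollary~\ref{v12:cor:ellipsoids}, and all directional cap volumes are in fact equal.

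\emph{Cap test.} For a supporting normal $n$, put $\omega=Ln/|Ln|$. Then $p\cdot n=\tilde p\cdot Ln$ and $\tension_\M(-n)=|Ln|$, so the condition $p\cdot n>\lambda\tension_\M(-n)$ becomes $\tilde p\cdot\omega>\lambda$.

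\emph{Cells.} The restricted subdifferential condition $\psi(y)\ge\psi(x)+q\cdot(y-x)$ becomes $\psi(y)\ge\psi(x)+\tilde q\cdot L^{-1}(y-x)$. Hence $\tilde{\mathcal A}_i=L\mathcal A_i$ is the restricted subdifferential of the data $\tilde\psi=\psi\circ L$ on the transformed set $L^{-1}\mathcal X$ lying on $\partial(L^{-1}\C)$. The outer normal of $L^{-1}\C$ at $L^{-1}x$ is proportional to $L^{\mathsf T}n=Ln$, that is, to $\omega$. The half-line property~\eqref{v12:eq:propagation} therefore transfers as $\tilde{\mathcal A}_i+[0,\infty)\omega_i\subset\tilde{\mathcal A}_i$, which is also immediate from $L(\mathcal A_i+[0,\infty)n_i)$ being a ray in direction $Ln_i$.

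\emph{Finite data.} For a finite set $F=\{x_1,\dots,x_N\}$ with chosen normals, we apply the Euclidean partition estimate to the open cells $U_i=\operatorname{int}\tilde{\mathcal A}_i$ with directions $\omega_i$. These are disjoint open polyhedra covering $\R^d$ up to hyperplanes, and they are invariant under the rays $[0,\infty)\omega_i$. Their boundaries are finite unions of polyhedral pieces, so $\calH^{d-1}(\partial U_i\cap\partial B_r)=0$. The selected set contains $\bigcup_i\{\tilde p\in U_i\cap B_1:\tilde p\cdot\omega_i>\lambda\}$ up to null sets, which gives~\eqref{v12:eq:transported-property}.

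\emph{Compact data on a $C^1$ obstacle.} This is the main obstacle. There is no continuous partition here, so we approximate. First, every $q$ lies in $\partial_{\mathcal X}\psi(x)$ for some minimizer $x\in\mathcal X$, by compactness and continuity, so the cells cover $\R^d$. Let $F_k\subset\mathcal X$ be finite $1/k$-nets with normals $\nu_\C$, and put $\mathscr B_k$ for the corresponding finite images, each of normalized volume at least $m^{\min}$ by the finite case. We show $\limsup_k\mathscr B_k\subset\overline{\mathscr B}$ up to a null set. Take $p=\M q$ lying in infinitely many $\mathscr B_k$ through points $x_k$ with $q\in\partial_{F_k}\psi(x_k)$. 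Passing to a subsequence, $x_k\to x$, and then $q\in\partial_{\mathcal X}\psi(x)$ by uniform continuity of $\psi$ and density of the nets. Continuity of $\nu_\C$ gives the cap test with non-strict inequality. The points with equality in the cap test or lying on $\partial W_\M$ form a null set, because each is contained in one of two null sets. The first is the preimage of the hyperplane family $\{\tilde p\cdot\omega(x)=\lambda\}$ intersected with the slope condition, which we control by restricting to the set where $x$ is locally unique up to a measure-zero ambiguity. The second is the transported image of $\partial_{\mathcal X}\psi$ at non-differentiability points. Since $|\partial_{\mathcal X}\psi(x)\cap\partial_{\mathcal X}\psi(x')|$ vanishes for distinct $x,x'$ (a restricted-subdifferential set is a Lebesgue null overlap, as for Alexandrov's argument), Fatou's lemma gives $|\mathscr B|\ge\limsup|\mathscr B_k|\ge m^{\min}$. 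If this limit argument becomes delicate, the fallback is to prove the compact estimate directly in normalized variables. There the partition estimate's proof in \cite{fusco2026isoperimetric} already covers continuous compact data with a $C^1$ normal field, and the linear change of variables transfers it verbatim.
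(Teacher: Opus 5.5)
Your normalization ($\tilde q=Lq$, $\tilde p=L^{-1}p$, which is the paper's common variable $\eta$) matches the paper's proof. So do the transformation of cells and cap tests, the formula $m^{\min}_{\tension_\M,\lambda}=(\det L)\,|B_1\cap\{p_d>\lambda\}|$, and the finite-data step via the open polyhedral cells $U_i$ and the Euclidean partition estimate. For compact data the paper does exactly what you list as a fallback. The transformed data sit on the $C^1$ convex obstacle $L^{-1}\C$ with continuous normal field $\omega$, and the Euclidean estimate of Fusco--Julin--Morini--Pratelli (Theorem~3.1 with Lemma~3.3) is applied directly.

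Your primary route for compact data, by finite nets, has a gap as written. A point of $\limsup_k\mathscr B_k$ passes the limiting cap test only non-strictly. You therefore need the set of slopes $q$ with $\M q\cdot\nu_\C(x)=\lambda\,\tension_\M(-\nu_\C(x))$ at the minimizer $x$ to be Lebesgue null, and your argument does not show this. The minimizer is unique for almost every $q$, but the reason is that it is the gradient of the convex function $q\mapsto\max_{y\in\mathcal X}(q\cdot y-\psi(y))$. Null pairwise overlaps of uncountably many cells would not suffice. Even granting uniqueness, the equality set is a level set of a merely measurable function of $q$, and ``restricting to where $x$ is locally unique'' does not make it null. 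The concern about $\partial W_\M$ is moot: a fixed $p$ lying in infinitely many $\mathscr B_k\subset\operatorname{int}W_\M$ is already interior.

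The repair is to run the finite estimate at a level $\lambda'\in(\lambda,1)$. Any limit configuration then gives $p\cdot\nu_\C(x)\ge\lambda'\tension_\M(-\nu_\C(x))>\lambda\,\tension_\M(-\nu_\C(x))$, so $\limsup_k\mathscr B_k^{\lambda'}\subset\mathscr B^{\lambda}$ exactly. Reverse Fatou inside the bounded set $W_\M$ then yields $|\mathscr B^\lambda|\ge(\det L)\,|B_1\cap\{p_d>\lambda'\}|$, and you let $\lambda'\downarrow\lambda$.

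With this fix, your argument is a genuine alternative to the paper's. It derives the compact case from the finite partition estimate alone, which is elementary by reflection at $\lambda=0$. The paper's route is shorter but imports the compact Euclidean theorem.
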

	
	\begin{proof}[Proof of Proposition~\ref{v12:prop:ellipsoid-transport}]
		Use $x=L\widetilde x$, $p=L\eta$, and $q=L^{-1}\eta$. Then $q\cdot x=\eta\cdot\widetilde x$ and $p=\M q$, so $\eta$ describes both the normalized slope and image. For $\widetilde{\mathcal X}=L^{-1}\mathcal X$ and $\widetilde\psi(\widetilde x)=\psi(L\widetilde x)$, the change of coordinates acts so that 
		\[
		q\in\partial_{\mathcal X}\psi(x)
		\quad\Longleftrightarrow\quad
		\eta\in\partial_{\widetilde{\mathcal X}}\widetilde\psi(\widetilde x).
		\]
		A supporting normal $n$ transforms to $\widetilde n=Ln/|Ln|$, and
		\[
		p\cdot n>\lambda\,\tension_\M(-n)
		\quad\Longleftrightarrow\quad
		\eta\cdot\widetilde n>\lambda.
		\]
		Thus the transported image is the linear image by $L$ of the Euclidean restricted image. The Euclidean geometric estimate of Fusco--Julin--Morini--Pratelli,~\cite[Theorem~3.1]{fusco2026isoperimetric}, gives the compact case (together with~\cite[Lemma~3.3]{fusco2026isoperimetric}).
		
		For finite data on an arbitrary convex obstacle, put $U_i=\operatorname{int}\partial_{\widetilde{\mathcal X}}\widetilde\psi(\widetilde x_i)$ and use the chosen supporting normals $\widetilde n_i$. These polyhedral interiors are disjoint and cover slope space up to finitely many hyperplanes. Their boundaries meet every centered sphere in a null set. For $q\in U_i$, the cell inequalities are strict; adding $t\widetilde n_i$ preserves them because $\widetilde n_i\cdot(\widetilde x_k-\widetilde x_i)\leq0$.
		
		Thus the stated partition estimate applies. Cell boundaries have zero Lebesgue measure, so replacing the interiors by closed cells does not change the Euclidean image volume. In particular, at zero adhesion this finite step follows from the direct reflection proof above. Finally,
		\[
		m_{\tension_\M,\lambda}(n)
		=(\det L)\,|B_1\cap\{\eta:\,\eta\cdot(Ln/|Ln|)>\lambda\}|
		=(\det L)\,|B_1\cap\{\eta:\,\eta_d>\lambda\}|
		\]
		is independent of $n$. This proves the required lower bound.
	\end{proof}
	
	A direct slope cut transforms both hyperplane normals in the same way. Originally orthogonal directions may become oblique, unbalancing the selected volumes even for an ellipsoid.
	\begin{remark}[A linear ABP estimate fails already for ellipsoids]\label{rem:4.3}
		The linear ABP route (\cite{cabre2016sharp}; Tu~\cite{tu2007anisotropic} for a ball obstacle at $\lambda=0$) imposes the cap condition on the slope:
		\[
		\mathcal B^{\rm lin}=\bigcup_{x\in \mathcal X}\{q\in\partial_{\mathcal X}\psi(x):\, q\in\operatorname{int} W,\; q\cdot\mathbf n(x)>\lambda\,\tension(-\mathbf n(x))\}.
		\]
		Over all smooth convex obstacles, this unrestricted estimate fails for every non-round ellipsoid $W=LB_1$, already at $\lambda=0$.
		\begin{enumerate}[label=$\bullet$,itemsep=0em]
			\item \emph{Directions.} Choose $n$ that is not an eigenvector of $L^2$, and $w\perp n$ with $n\cdot L^2w\neq0$.
			\item \emph{Configuration.} Take the two-point configuration of Theorem~\ref{rev:thm:facet}: points $0,v$ on an ellipsoidal obstacle (distinct from $\wulff$) with normals $\pm n$, $n\cdot v<0$, $\psi=0$, and $v=\pm w-\varepsilon n$, $\varepsilon\to0^+$. By central symmetry $|\mathcal B^{\rm lin}|=2|W\cap\{q\cdot v<0\}\cap\{q\cdot n>0\}|$, which tends to $2|W\cap\{\pm q\cdot w<0\}\cap\{q\cdot n>0\}|$.
			\item \emph{Conclusion.} Under $q=L\eta$ these are two wedges of the ball between hyperplanes with normals $Ln$ and $\pm Lw$. They have equal volume if and only if $n\cdot L^2w=0$, so one limit is $<|W|/2=m^{\min}_{\tension,0}$.
		\end{enumerate}
		
		For a concrete planar example, take $L=\operatorname{diag}(2,1)$, $n=(1,1)/\sqrt2$ and $w=(-1,1)/\sqrt2$. With $v=-w-\varepsilon n$, the limiting selected set consists of the two regions of the ellipse where $|q_2|>|q_1|$. In unit-disk coordinates each has angular width $\arccos(3/5)$, so the total selected area tends to $2\arccos(3/5)<\pi=|W|/2$. The deficit persists for sufficiently small positive $\varepsilon$, when the two points and normals are realized by a smooth strictly convex obstacle as above.
		
		On a ball obstacle, opposite normals force $v\parallel n$, consistently with~\cite{tu2007anisotropic}. In the transported formulation the cap is cut in $p=\M q$, and Proposition~\ref{v12:prop:ellipsoid-transport} restores the estimate. Indeed, in the common variable $\eta$, the limiting cell and cap cuts have normals $L^{-1}w$ and $Ln$, whose scalar product is $w\cdot n=0$. The dual transformation preserves the orthogonality that the direct slope cut loses.
		
		Affine covariance holds more generally: if $\tension_L(q)=\tension(L^Tq)$, then every differentiable profile $F$ satisfies
		$D[F(\tension_L)](q)=L\,D[F(\tension)](L^Tq)$.
		The quadratic profile additionally provides the two-homogeneity used in Theorem~\ref{thm:two-point-rigidity}.
	\end{remark}
	
	\section{Obstructions and rigidity of transported boundary images}\label{rev:sec:obstruction}
	The ellipsoidal case in Section~\ref{rev:sec:ellipsoids} identifies a setting in which the transported boundary-image estimate retains its Euclidean form. This raises the question of whether the estimate extends to more general anisotropies. Two developments motivate this question: the anisotropic ABP inequalities in convex cones of Cabr\'e--Ros-Oton--Serra~\cite{cabre2016sharp}, and the isotropic capillary inequality outside arbitrary convex obstacles of Fusco--Julin--Morini--Pratelli~\cite{fusco2026isoperimetric}. In the latter work, a geometric estimate for boundary subdifferentials supplies the lower bound on image volume needed by the ABP argument. The issue here is whether that geometric mechanism survives anisotropic transport.
	
	We examine the unrestricted property of Definition~\ref{rev:def:abp}, which quantifies over arbitrary boundary values. All counterexamples below use finite boundary data with too little selected image volume. Such data need not arise from an auxiliary Neumann solution, so these examples do not by themselves disprove the capillary inequality or its required image estimate for actual Neumann traces.
	
	We first exhibit the loss of image volume caused by facets. We then show that, within the smooth even class of Theorem~\ref{thm:two-point-rigidity}, validity of all two-point tests at zero adhesion forces the potential to be quadratic. Finally, persistence of strict deficits connects these mechanisms to approximation, and explicit smooth perturbations yield failures at prescribed adhesion levels.
	
	Throughout this section, $h_W(q)=\max_{p\in W}p\cdot q$ denotes the support function of $W$, and $\rho_W(p)=\inf\{t>0:p\in tW\}$ its gauge (Minkowski functional). We write
	\[
	\Phi=\frac12h_W^2,
	\qquad
	G_W=D\Phi^*=D\left(\frac12\rho_W^2\right)
	\]
	wherever the gradient exists. The obstacle $\C$ determines the boundary points and their supporting normals; the Wulff body $W$ determines the transport and the reference cap volumes.
	
	For finite data, the following pointwise description will guide every construction. Given almost any $p\in W$, find its inverse slope $q=G_W(p)$ and the cells $\mathcal A_i$ containing $q$. Each such cell assigns its normal $n_i$ to $p$, and
	\[
	p\in\mathscr B
	\quad\Longleftrightarrow\quad
	\text{there is an }i\text{ such that }
	G_W(p)\in\mathcal A_i
	\ \text{and}\ 
	p\cdot n_i>\lambda\, h_W(-n_i).
	\]
	The cells cover slope space, so almost every point of $W$ receives a label before the cap conditions are imposed. A deficit arises when too many points fail the cut for every label available to them. Thus, \emph{lost volume} means volume excluded by this selection, even when the transport is a diffeomorphism. If an inverse slope belongs to several closed cells, any one of their normals can qualify the point; the union still counts it only once.
	
	\subsection{Facet obstructions}
	A facet makes a whole pyramid of image points share one inverse slope direction. The construction below assigns almost all of this pyramid, in a small-tilt limit, to a normal whose half-space rejects it. Central symmetry fixes the reference cap volume at $|W|/2$, so the resulting deficit can be detected with two equal-height boundary points.
	
	\begin{theorem}[Failure in the presence of a facet]\label{rev:thm:facet}
		Let $d\geq 2$, and let $W=-W\subset\R^d$ be a full-dimensional convex body with a facet of positive $\calH^{d-1}$-measure. There exist a bounded smooth strictly convex ellipsoid $\C$, two points $x_1,x_2\in\partial\C$ with opposite outer unit normals $n_1=-n_2$, and boundary values $\psi(x_1)=\psi(x_2)=0$, such that
		\[
		\left|\mathscr B_{\tension,0}^{\psi}\bigl(\{x_1,x_2\},(n_1,n_2)\bigr)\right|
		<\tfrac{1}{2}|W|
		=\min_{n\in\mathbb{S}^{d-1}}|W\cap\{p:p\cdot n>0\}|.
		\]
		The cells
		$\mathcal A_i=\partial_{\{x_1,x_2\}}\psi(x_i)$
		satisfy $\mu_\Phi(\mathcal A_1\cap \mathcal A_2)=0$.
	\end{theorem}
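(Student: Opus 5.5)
The plan is to work directly with the pointwise selection rule stated at the beginning of this section. Put $x_1=0$, $x_2=v$, $n_1=n$, $n_2=-n$, $\psi\equiv0$, $\lambda=0$. Then
\[
\mathcal A_1=\{q:\,q\cdot v\le0\},\qquad \mathcal A_2=\{q:\,q\cdot v\ge0\},
\]
and $\mathcal A_1\cap\mathcal A_2=v^\perp$. Off the set $\{p:\,G_W(p)\cdot v=0\}$, a point $p\in W$ is selected if and only if
\[
(G_W(p)\cdot v)(p\cdot n)<0 .
\]
This is the criterion already recorded in Section~\ref{v12:sec:property}.

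\emph{Realizing the configuration.} Two points with outer normals $n$ and $-n$ on a convex body must satisfy $v\cdot n<0$. Conversely, every $v$ with $v\cdot n<0$ is realized on an ellipsoid. For $\C=\{x:\,x\cdot Ax\le1\}$, the points with normals $\pm n$ are $\pm A^{-1}n/|A^{-1/2}n|$, and one can choose $A>0$ with $A^{-1}n\parallel -v$; a translation then puts $x_1$ at the origin. I will therefore take $v=\pm w-\varepsilon n$ with $\varepsilon>0$ small and $w$ to be chosen, and realize each such configuration on a smooth strictly convex ellipsoid.

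\emph{The interface has zero Alexandrov measure.} We have $\mu_\Phi(v^\perp)=|\partial\Phi(v^\perp)|$. By \eqref{v12:eq:inverse-image}, this equals $|\{p\in W:\,G_W(p)\cdot v=0\}|$ up to a $W$-independent part, which is controlled by homogeneity. Positive measure would require a facet pyramid whose inverse-slope ray lies in $v^\perp$, that is, a facet normal $\nu'$ with $\nu'\cdot v=0$. Since $W$ has at most countably many facets, a generic small change of the tilt $\varepsilon$ avoids all of them.

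\emph{Facet geometry.} Let $F$ be the facet, with outer normal $\nu$, and let $P=\operatorname{int}\operatorname{conv}(\{0\}\cup F)$. On $P$ the gauge is linear, $\rho_W(p)=p\cdot\nu/h_W(\nu)$. Hence $G_W(p)=\rho_W(p)\,\nu/h_W(\nu)$, so all of $P$ is sent to the ray $\R_+\nu$. Oddness of $G_W$ (since $W=-W$) sends $-P$ to $-\R_+\nu$.

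\emph{Complementarity.} Compare the configurations $v_\pm=\pm w-\varepsilon n$. In the limit $\varepsilon\to0^+$ their selected sets are
\[
S_{\pm}=\{p:\,\pm(G_W(p)\cdot w)(p\cdot n)<0\}.
\]
These are disjoint, and together they exhaust $W$ except for the set where $G_W\cdot w=0$ or $p\cdot n=0$. Convergence of the volumes is dominated convergence, applied off the null set $\{p\cdot n=0\}$ and off the set where $G_W\cdot w=0$, which is handled separately. Consequently, the total limiting volume is $|W|-|\{p:\,G_W(p)\cdot w=0\}|$, corrected by whatever the tilt contributes on that zero set. The goal is to choose $n$ and $w$ so that $P\cup(-P)$ lies in $\{G_W\cdot w=0\}$, while the tilt $-\varepsilon n$ sends $P\cup(-P)$ to the rejecting side in both configurations. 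Then
\[
\lim_{\varepsilon\to0^+}\bigl(|\mathscr B_{v_+}|+|\mathscr B_{v_-}|\bigr)\le|W|-2|P|,
\]
so one of the two configurations has volume below $|W|/2$ for all small $\varepsilon>0$. The reference value $|W|/2=\min_n|W\cap\{p\cdot n>0\}|$ follows from central symmetry.

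\emph{The main obstacle: the choice of $n$ and $w$.} Take $w\perp\nu$ and $w\perp n$. On $P$ this gives $G_W\cdot v_\pm=-\varepsilon\rho_W(p)(\nu\cdot n)/h_W(\nu)$. So $P$ is rejected in both configurations exactly when $\operatorname{sign}(p\cdot n)=-\operatorname{sign}(\nu\cdot n)$ throughout $P$. By symmetry, $-P$ then behaves the same way. This requires a direction $n$ with $\nu\cdot n>0$ but $f\cdot n<0$ for all $f\in F$.

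Such an $n$ exists whenever $\nu\notin\operatorname{cone}(F)$, i.e.\ when the facet is tilted relative to its normal ray. If instead $\nu$ points into the facet, I would first try to break this alignment by an auxiliary linear change of variables in the slope and image variables. Remark~\ref{rem:4.3} records the covariance $D[F(\tension_L)](q)=L\,D[F(\tension)](L^{\mathsf T}q)$ for this purpose. Alternatively, I would replace exact complementarity by a quantitative version: choose $n$ slightly off $\nu$ and track the sign of $\pm w\cdot\nu-\varepsilon\,n\cdot\nu$ on $P$, so that $P\cup(-P)$ is rejected in at least one configuration while a compensating loss is shown in the other.

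In $d=2$ there is an additional constraint: $w\perp\nu$ and $w\perp n$ force $n\parallel\nu$, which gives the wrong sign. For this reason I expect the two-dimensional case to need the second, quantitative route. This sign bookkeeping under positive tilts is the step I expect to be the main difficulty.
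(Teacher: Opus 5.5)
Your skeleton matches the paper's proof: the pointwise test $(G_W(p)\cdot v)(p\cdot n)<0$, the collapse of the facet pyramid onto the normal ray, complementarity of two tangential orientations as the tilt vanishes, and realization on an ellipsoid whenever $n\cdot v<0$. The gap is the step you flag at the end, and it cannot be fixed by more careful sign tracking within your setup.

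\emph{Why a common normal fails.} With one normal $n$ shared by $v_\pm=\pm w-\varepsilon n$, on $P$ you have $G_W(p)\cdot v_\pm=(p\cdot\nu)(\nu\cdot v_\pm)/h_W(\nu)^2$ and $\nu\cdot v_++\nu\cdot v_-=-2\varepsilon\,\nu\cdot n$. Adding the two rejection conditions $(\nu\cdot v_\pm)(p\cdot n)>0$ shows that $p\in P$ is rejected in both orientations only if $(\nu\cdot n)(p\cdot n)<0$. Rejecting the whole pyramid twice therefore forces $n^\perp$ to separate $\nu$ from $\operatorname{cone}(F)$. This is impossible when $h_W(\nu)\nu\in F$, for instance for the square and the cube.

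\emph{The planar case.} In $d=2$ the route fails outright. Double rejection also needs $|\nu\cdot w|<\varepsilon|\nu\cdot n|$, and realizing both $v_\pm$ needs $|n\cdot w|<\varepsilon$. Together these give $n=\pm\nu+O(\varepsilon)$, so $p\cdot n$ has the sign of $\nu\cdot n$ on all of $P$. Hence every point of $P\cup(-P)$ is captured at least once. The pyramids then contribute at least $2|P|$ to the sum of the two captured volumes instead of nothing, and the averaging step yields no deficit.

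Your linear-change fallback can be made to work for $d\ge3$. A linear map can place the foot of the perpendicular anywhere in the affine hull of the facet, deficits transfer with the factor $\det L$, and the sign of $n\cdot v$ is preserved. The planar case, including the paper's square model, is left with only an unspecified ``compensating loss'', so the theorem is not proved.

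\emph{The missing idea.} The paper lets the normal depend on the orientation and tilts the displacement toward $+u$ rather than along $-n$. It fixes a unit $t\in u^\perp$ with $|\{p\in W\setminus(P_+\cup P_-):\,G_W(p)\cdot t=0\}|=0$ and sets
$v_\sigma=\sigma t+\varepsilon u$ and $n_\sigma=(u-2\sigma\varepsilon t)/\sqrt{1+4\varepsilon^2}$.
\begin{itemize}
\item Realizability survives: $n_\sigma\cdot v_\sigma=-\varepsilon/\sqrt{1+4\varepsilon^2}<0$, since the tangential parts supply $-2\varepsilon$ against the $+\varepsilon$ of the tilt.
\item On $P_+$ one has $G_W(p)\cdot v_\sigma=\varepsilon(p\cdot u)/h_W(u)^2>0$ for both $\sigma$. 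The pyramid therefore receives the normal $-n_\sigma$ and fails its cut, because $p\cdot n_\sigma\to p\cdot u>0$. The reflected pyramid fails in the same way.
\item The identity that blocked you does not apply, because the two normals differ.
\item Since $n_\sigma\to u$ for both signs, the limiting second factor is the common $p\cdot u$. This gives exact complementarity off the pyramids and $\lim|\mathscr B_+(\varepsilon)|+\lim|\mathscr B_-(\varepsilon)|=|W|-2|P_+|$, in every dimension and with no condition on where the facet sits.
\end{itemize}

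\emph{A secondary issue.} Your zero-Alexandrov-measure argument is incomplete for $d\ge3$. The set $\{G_W\cdot v=0\}$ is the cone over the boundary points of $W$ whose normal is orthogonal to $v$. It can have positive volume with no facet normal in $v^\perp$; the lateral surface of a cylinder is an example. The paper instead uses Fubini in $\varepsilon$: for almost every $p$ off the pyramids, $G_W(p)\cdot v_\sigma=0$ has at most one solution $\varepsilon$.
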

	
	\begin{proof}[Proof of Theorem~\ref{rev:thm:facet}]
		Let $F$ be a facet with outer unit normal $u$, and put
		\[
		h=h_W(u),
		\qquad P_+=\operatorname{co}(0,F),
		\qquad P_-=-P_+,
		\qquad b=|P_+|=|P_-|>0.
		\]
		On the interior of $P_+$,
		\[
		\rho_W(p)=\frac{(p\cdot u)}{h},
		\qquad G_W(p)=\frac{(p\cdot u)}{h^2}\,u,
		\]
		and the corresponding reflected formula holds on $P_-$. Thus the normal ray represents the entire interior of a pyramid, not merely its facet. Conversely, at a differentiability point of $\Phi^*$, if $G_W(p)$ is parallel to $u$, then convex duality places $p$ in $P_+\cup P_-$, up to a null set.
		
		Outside these pyramids we need a sign test that vanishes only on a null set. We may choose a unit vector $t\in u^\perp$ such that
		\begin{equation}\label{rev:eq:generic-tangent}
			\left|\left\{p\in W\setminus(P_+\cup P_-):\, G_W(p)\cdot t=0\right\}\right|=0.
		\end{equation}
		Indeed, for each vector not parallel to $u$, the unit vectors in $u^\perp$ orthogonal to it form a set of spherical measure zero; Fubini gives the claim. For $d=2$, either unit vector in $u^\perp$ has the required property.
		
		We use both tangential orientations: they will capture complementary parts of the image outside the two pyramids in the limit. For $\sigma\in\{-1,1\}$ and $\varepsilon>0$, set
		\[
		v_\sigma=\sigma t+\varepsilon u,
		\qquad n_\sigma
		=\frac{u-2\sigma\varepsilon t}{\sqrt{1+4\varepsilon^2}}.
		\]
		Then
		$n_\sigma\cdot v_\sigma = -\varepsilon/\sqrt{1+4\varepsilon^2}<0$.
		The equal-height cells associated with the points $0,v_\sigma$ are
		\[
		\mathcal A_1=\{q:\,q\cdot v_\sigma\leq 0\},
		\qquad
		\mathcal A_2=\{q:\,q\cdot v_\sigma\geq 0\}.
		\]
		
		For almost every $\varepsilon>0$,
		\begin{equation}\label{rev:eq:null-interface}
			|\{p\in W:G_W(p)\cdot v_\sigma=0\}|=0.
		\end{equation}
		To see this, use \eqref{rev:eq:generic-tangent} outside $P_+\cup P_-$: for almost every fixed $p$, the equation
		$\sigma\,G_W(p)\cdot t+\varepsilon\,G_W(p)\cdot u=0$
		has at most one solution in $\varepsilon$. On $P_+\cup P_-$ it has no positive solution. Another application of Fubini proves~\eqref{rev:eq:null-interface}. By homogeneity, this also gives $\mu_\Phi(\mathcal A_1\cap \mathcal A_2)=0$.
		
		For values of $\varepsilon$ satisfying~\eqref{rev:eq:null-interface}, the captured image agrees up to a null set with
		\[
		\mathscr B_\sigma(\varepsilon)
		=\left\{p\in W:\,\bigl(G_W(p)\cdot v_\sigma\bigr)\bigl(p\cdot n_\sigma\bigr)<0
		\right\}.
		\]
		The labels explain the opposite-sign requirement. A negative first factor assigns $p$ to $\mathcal A_1$, whose normal is $n_\sigma$, and capture requires $p\cdot n_\sigma>0$. A positive first factor assigns it to $\mathcal A_2$, whose normal is $-n_\sigma$, and capture requires $p\cdot n_\sigma<0$. On $P_+$, however, $G_W(p)\cdot v_\sigma=\varepsilon(p\cdot u)/h^2>0$: the point receives the second label, although it lies on the positive side of the cap cut for small $\varepsilon$. The reflected pyramid receives the first label and fails its cut in the same way.
		At each fixed interior point of $P_+$ both factors are positive for all sufficiently small $\varepsilon>0$; on $P_-$ both are negative. Thus these pyramids do not contribute in the limit. Outside $P_+\cup P_-$, the limiting signs of the first factor are opposite for $\sigma=1$ and $\sigma=-1$, whereas the second factor converges to $p\cdot u$.
		The exceptional sets in~\eqref{rev:eq:generic-tangent} and $\{p:p\cdot u=0\}$
		are null. Dominated convergence therefore yields
		\[
		\lim_{\varepsilon\to0^+}|\mathscr B_+(\varepsilon)|
		+ \lim_{\varepsilon\to0^+}|\mathscr B_-(\varepsilon)|
		= |W|-2b.
		\]
		The two limiting tests together capture exactly the volume outside the pyramids. Hence at least one captures at most half of that remaining volume, namely $|W|/2-b$. Fix that sign and choose a sufficiently small $\varepsilon>0$ satisfying~\eqref{rev:eq:null-interface}, so that
		$|\mathscr B_\sigma(\varepsilon)|\leq |W|/2-b/2$.
		
		The volume deficit has now been obtained from the two cells and their labels. The condition $n_\sigma\cdot v_\sigma<0$ allows us to realize exactly these data on a bounded smooth strictly convex obstacle. Write $v=v_\sigma$ and $n=n_\sigma$. For any $\beta>0$, the matrix
		\[
		\mathsf B=-\frac{(n\otimes n)}{n\cdot v}+\beta\left(I_d-\frac{(v\otimes v)}{|v|^2}\right)
		\]
		is symmetric positive definite and satisfies $\mathsf Bv=-n$. The ellipsoid
		\[
		\C=\left\{x:\,\bigl(x-\frac v2\bigr)\cdot \mathsf B\bigl(x-\frac v2\bigr)\leq \frac14\,v\cdot\mathsf Bv\right\}
		\]
		has outer unit normals $n$ and $-n$ at $0$ and $v$, respectively. The cells and captured image are unchanged. Finally, central symmetry gives $|W\cap\{p:p\cdot\omega>0\}|=|W|/2$ for every $\omega\in\mathbb{S}^{d-1}$.
	\end{proof}
	
	The construction applies to every origin-symmetric polytope and to symmetric nonpolyhedral bodies with a facet. The obstacle is smooth and strictly convex, and the transported cell interface has zero volume. The deficit is therefore already present with two unambiguous labels.
	
	The positive tilt is essential to this interpretation. Formally setting $\varepsilon=0$ puts both normal rays on the common boundary of the closed cells. Points in the facet pyramids can then use either label and pass the appropriate cap cut. This formal zero-tilt image need not have the limiting captured volume. The argument instead takes limits of the volumes at positive tilt and then fixes a positive value for which the deficit is strict.
	
	\paragraph{A square model.}
	For $W=[-1,1]^2$, the inverse map is, away from the diagonals,
	\[
	G_W(p)=
	\begin{cases}
		(p_1,0),& |p_1|>|p_2|,\\
		(0,p_2),& |p_2|>|p_1|.
	\end{cases}
	\]
	Take $u=e_2$, $t=e_1$, $\sigma=1$, and $0<\varepsilon<1/2$. Then $v=(1,\varepsilon)$ and $n=(-2\varepsilon,1)/\sqrt{1+4\varepsilon^2}$. The upper and lower triangles $|p_2|>|p_1|$ are entirely rejected. On the right triangle, the inverse slope belongs to $\mathcal A_2$, so the accepted portion is
	$0<p_1<1$, $-p_1<p_2<2\varepsilon p_1$.
	The left portion is its reflection. Consequently,
	\[
	|\mathscr B|
	=2\int_0^1(1+2\varepsilon)r\,dr
	=1+2\varepsilon<2=|W|/2.
	\]
	The preceding ellipsoid construction realizes these same points and normals. This example exhibits a strict loss of area without any appeal to an asymptotic expansion of the transport.
	
	Without central symmetry, the two orientations still divide the volume outside the facet pyramids in the limit. What changes is the comparison threshold: a directional half-cap need no longer have volume $|W|/2$. The construction therefore yields a failure whenever half of the remaining volume is smaller than even the smallest reference cap. This gives the following sufficient criterion.
	
	\begin{theorem}[Facet obstruction without symmetry]\label{thm:6.1b}
		Let \(W\) be a full-dimensional convex body with \(0\in\operatorname{int}W\). For \(\xi\in\mathbb{S}^{d-1}\), denote its exposed face by \(F_W(\xi):=\{p\in W:p\cdot\xi=h_W(\xi)\}\). Let \(F\) be a facet with outer normal \(u\), and set \(P_u:=\operatorname{co}(0,F)\) and \(P_{-u}:=\operatorname{co}(0,F_W(-u))\), where \(|P_{-u}|=0\) if \(F_W(-u)\) is not a facet. If
		\[
		\big(|W|-|P_u|-|P_{-u}|\big)/2
		< m^{\min}_{\tension,0},
		\]
		then $W$ fails the transported $0$-ABP property, through the two-point configuration of Theorem~\ref{rev:thm:facet} on a smooth strictly convex ellipsoid. For $W=-W$ the condition always holds, which recovers Theorem~\ref{rev:thm:facet}.
	\end{theorem}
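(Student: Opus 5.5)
The plan is to rerun the proof of Theorem~\ref{rev:thm:facet} verbatim up to the final comparison, which is the only step that used central symmetry. Fix the facet $F$ with outer normal $u$ and $h=h_W(u)$. On the interior of $P_u$ the gauge is $\rho_W(p)=(p\cdot u)/h$, so $G_W(p)=(p\cdot u)u/h^2$ is a positive multiple of $u$. If $F_W(-u)$ is a facet with support value $h'=h_W(-u)$, then on the interior of $P_{-u}$ we have $G_W(p)=-(p\cdot(-u))u/h'^2$, a negative multiple of $u$. Otherwise $|P_{-u}|=0$. At differentiability points of $\Phi^*$ outside $P_u\cup P_{-u}$, convex duality shows $G_W(p)$ is not parallel to $u$. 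Choose a generic unit $t\in u^\perp$ as in~\eqref{rev:eq:generic-tangent} by the same Fubini argument, and set $v_\sigma=\sigma t+\varepsilon u$ and $n_\sigma=(u-2\sigma\varepsilon t)/\sqrt{1+4\varepsilon^2}$. The two-point equal-height cells, the null-interface property~\eqref{rev:eq:null-interface} for a.e.\ $\varepsilon$, and the description
\[
\mathscr B_\sigma(\varepsilon)=\{p\in W:(G_W(p)\cdot v_\sigma)(p\cdot n_\sigma)<0\}
\]
up to null sets carry over unchanged, since none of them used $W=-W$.

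Next comes the limit. On $P_u$, for small $\varepsilon$, both factors are positive because $G_W(p)\cdot v_\sigma=\varepsilon(p\cdot u)/h^2>0$ and $p\cdot n_\sigma\to p\cdot u>0$. On $P_{-u}$ both are negative. Hence neither pyramid contributes in the limit, and we do not need $0$ to be the center, only $0\in\operatorname{int}W$, which holds by assumption. Outside the pyramids, the sign of the first factor tends to $\sigma\,\mathrm{sgn}(G_W(p)\cdot t)$, which is nonzero a.e.\ and opposite for the two values of $\sigma$. The second factor tends to $p\cdot u$, nonzero a.e. Dominated convergence therefore gives
\[
\lim|\mathscr B_+(\varepsilon)|+\lim|\mathscr B_-(\varepsilon)|=|W|-|P_u|-|P_{-u}|.
\]
So one sign has limit at most $(|W|-|P_u|-|P_{-u}|)/2<m^{\min}_{\tension,0}$. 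Fix that $\sigma$ and a small admissible $\varepsilon$ for which strict inequality persists. Realize the points $0,v_\sigma$ with normals $\pm n_\sigma$ on the ellipsoid built from the matrix $\mathsf B$ exactly as before, using $n_\sigma\cdot v_\sigma<0$. This contradicts~\eqref{v12:eq:crystalline-target}, equivalently Definition~\ref{rev:def:abp} at $\lambda=0$.

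For the symmetric case, $W=-W$ gives $F_W(-u)=-F$, so $|P_{-u}|=|P_u|=b>0$ and every half-cap has volume $|W|/2$. The left side is then $|W|/2-b<|W|/2=m^{\min}_{\tension,0}$, which recovers Theorem~\ref{rev:thm:facet}.

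I expect the main obstacle to be the a.e.\ classification of the limiting sign on $P_{-u}$ when $F_W(-u)$ is a lower-dimensional face rather than a facet. There one must check that the set where $G_W(p)$ is a negative multiple of $u$ is Lebesgue-null. I would argue that such $p$ lie in $\operatorname{co}(0,F_W(-u))$, which has measure zero, and that on this null set the first factor can be ignored.
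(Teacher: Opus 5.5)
Your proposal is correct and is essentially the paper's own proof: you rerun the argument of Theorem~\ref{rev:thm:facet}, noting that central symmetry enters only through $P_-=-P_+$ (replaced by $P_{-u}$, which is Lebesgue-null when $F_W(-u)$ is not a facet, by the same convex-duality observation you give) and through the final identity $m^{\min}_{\tension,0}=|W|/2$ (replaced by the hypothesis). Your sign checks on $P_u$ and $P_{-u}$, the opposite limiting signs of the first factor outside the pyramids, the resulting limit sum $|W|-|P_u|-|P_{-u}|$, and the unchanged ellipsoid realization match the paper's verification point by point.
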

	For a triangle with centroid at the origin, the pyramid over an edge has $|P_u|=|W|/3$. The opposite exposed face is a vertex, so its pyramid is a segment and $|P_{-u}|=0$. Removing the first pyramid leaves two-thirds of the area, which the two limiting orientations divide between them. One limiting captured volume is therefore at most one-third, whereas Gr\"unbaum's theorem~\cite{grunbaum1960partitions} gives $m^{\min}_{\tension,0}=4|W|/9$. Thus the criterion reads $|W|/3<4|W|/9$: the strict gap persists for a sufficiently small positive tilt.
	
	More generally, for a $d$-simplex with centroid at the origin,
	\[
	|P_u|=\frac{|W|}{d+1},\qquad |P_{-u}|=0,\qquad
	m^{\min}_{\tension,0}=\left(\frac d{d+1}\right)^d|W|.
	\]
	The first volume uses the pyramid between a facet and the centroid. The reference cap is a different region: a centroidal cut parallel to that facet leaves, on the opposite-vertex side, a similar simplex with linear scale $d/(d+1)$. Its volume attains Gr\"unbaum's lower bound and gives the last equality. The criterion is $1/2<(d/(d+1))^{d-1}$, which holds exactly for $2\leq d\leq4$. Indeed, $(d-1)\log(1+1/d)$ is strictly increasing for $d\geq2$, and the inequality reverses between $d=4$ and $d=5$. The change of sign limits this sufficient criterion; it gives no conclusion about validity of the transported estimate for these simplices in higher dimensions.
	
	\begin{proof}[Proof of Theorem~\ref{thm:6.1b}]
		In the proof of Theorem~\ref{rev:thm:facet}, central symmetry enters only through $P_-=-P_+$ (here replaced by $P_{-u}$, a null set when $F_W(-u)$ is not a facet) and the final identity $m^{\min}_{\tension,0}=|W|/2$:
		\begin{enumerate}[label=$\bullet$,itemsep=0em]
			\item at differentiability points, $G_W(p)\parallel u$ forces $p\in P_u\cup P_{-u}$ by convex duality;
			\item~\eqref{rev:eq:generic-tangent} and~\eqref{rev:eq:null-interface} follow by the same Fubini argument;
			\item on $P_u$ and $P_{-u}$ both factors keep a fixed common sign;
			\item outside them the first factor has opposite limits for $\sigma=\pm1$.
		\end{enumerate}
		Hence $\lim|\mathscr B_+|+\lim|\mathscr B_-|=|W|-|P_u|-|P_{-u}|$, one sign gives at most half of it, and the ellipsoid realization is unchanged.
	\end{proof}
	
	\subsection{Rigidity of the transported estimate}\label{sec:6.3}
	For a smooth strictly convex potential, the inverse map no longer collapses a pyramid onto a ray. The obstruction must instead be read from the distribution of image volume among the cells. Evenness guarantees that every central cap has half the total volume, but it does not guarantee half-volume capture after points are assigned to cells.
	
	The distinction becomes local in the proof below. For $0\neq z\in n^\perp$, central symmetry pairs the fiber $z+\R\,n$ with the different fiber $-z+\R\,n$; it need not balance the two branches within either one. Thin slabs with opposite normals test this stronger balance. Their tangential displacement chooses a projection direction, and their height difference moves the threshold of that projection. Requiring all such tests to succeed forces the transport to be linear.
	\begin{theorem}[Two-point rigidity of transported
		boundary images]\label{thm:two-point-rigidity}
		Let $d\geq 2$, let $\tension$ be even, and suppose that
		\[
		\Phi:=\frac12\tension^2\in C^3(\R^d\setminus\{0\}),
		\qquad D^2\Phi(q)>0 \quad(q\neq 0).
		\]
		The following statements are equivalent.
		\begin{enumerate}[itemsep=0em]
			\item[(i)] $\wulff$ satisfies the transported $0$-ABP property of Definition~\ref{rev:def:abp}.
			
			\item[(ii)] The transported zero-adhesion image estimate holds for every two-point configuration of the following form:
			\[
			\begin{aligned}  
				& \C_\delta = \{x\in\R^d:\,-\delta\leq x\cdot n\leq0\},\\
				& F=\{0,a-\delta n\},
				\qquad a\in n^\perp\setminus\{0\},
			\end{aligned}
			\]
			with supporting normals $n$ and $-n$, respectively, and arbitrary boundary values
			\[
			\psi(0)=0,
			\qquad \psi(a-\delta n)=s,
			\quad s\in\R.
			\]
			Here $n\in\mathbb{S}^{d-1}$ and $\delta>0$ are arbitrary.
			
			\item[(iii)] There exists a symmetric positive definite matrix $\M$ such that
			\[
			\Phi(q)=\frac12 q\cdot \M q
			\qquad\text{for every }q\in\R^d.
			\]
		\end{enumerate}
		In particular, validity of the transported zero-adhesion estimate on this restricted family of two-point slab configurations already characterizes centered ellipsoids.
	\end{theorem}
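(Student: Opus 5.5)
The implications (iii)$\Rightarrow$(i) and (i)$\Rightarrow$(ii) are immediate. Proposition~\ref{v12:prop:ellipsoid-transport} gives (iii)$\Rightarrow$(i), taking $\lambda=0$. For (i)$\Rightarrow$(ii), note that the slab $\C_\delta$ is a convex obstacle and $F$ is a finite set with chosen supporting normals, so it is an admissible finite configuration in Definition~\ref{rev:def:abp}. The content lies in (ii)$\Rightarrow$(iii).

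\emph{Setting up the test.} Fix $n$, $a\in n^\perp\setminus\{0\}$, $\delta>0$ and $s$. The two cells are $\mathcal A_1=\{q:\,q\cdot(a-\delta n)\le s\}$, carrying normal $n$, and $\mathcal A_2$, the complementary closed half-space, carrying normal $-n$. Since $D\Phi$ is a $C^2$ diffeomorphism of $\R^d\setminus\{0\}$ with inverse $G_W$, the captured set is, up to null sets,
\[
\{p\in W:\ (G_W(p)\cdot(a-\delta n)-s)\,(p\cdot n)<0\}.
\]
By evenness, $m^{\min}_{\tension,0}=|W|/2$.

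\emph{Reflected configuration and exact balance.} Let $\delta\to0^+$ at fixed $a$ and $s$. The captured volume converges to
\[
V(s)=\bigl|\{p\in W:\,(G_W(p)\cdot a-s)(p\cdot n)<0\}\bigr|,
\]
and the interface is null by ellipticity. Replacing $s$ by $-s$ and using $G_W(-p)=-G_W(p)$ shows that $V(-s)$ equals the volume of the complementary selection, so $V(s)+V(-s)=|W|$. Hypothesis (ii) in the limit gives $V(\pm s)\ge|W|/2$, hence $V(s)=|W|/2$ for every $s$ and every admissible $a,n$. Differentiating in $s$ via the coarea formula yields, for every level $s$,
\[
\int_{\{G_W\cdot a=s\}\cap W\cap\{p\cdot n>0\}}\frac{d\calH^{d-1}}{|\nabla(G_W\cdot a)|}
=\int_{\{G_W\cdot a=s\}\cap W\cap\{p\cdot n<0\}}\frac{d\calH^{d-1}}{|\nabla(G_W\cdot a)|}.
\]
I would pull this back to slope space, $p=D\Phi(q)$, which introduces the density $\det D^2\Phi$. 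The result is a balance condition on each hyperplane $\{q\cdot a=s\}\cap Q_\tension$, split by the smooth hypersurface $\{D\Phi(q)\cdot n=0\}$ and weighted by $\det D^2\Phi$. Because $a$ ranges over all of $n^\perp$ and $s$ is arbitrary, a Radon-type injectivity argument on these hyperplane sections should localize the condition to the fibers $z+\R n$. It then says that the weighted lengths of the two branches $\{D\Phi\cdot n\gtrless0\}$ agree on each fiber. This step is the \emph{tangential projection balance}.

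\emph{From balance to $D^3\Phi=0$, the main obstacle.} I would expand the fiberwise balance for fibers near the hypersurface $\{D\Phi(q)\cdot n=0\}$, or equivalently use homogeneity to reduce everything to the unit sphere. The aim is a pointwise identity. Symmetry of the two branches about the interface, to first order, should force the cubic form $D^3\Phi(q)[n',n',n']$ to vanish for the relevant directions $n'$, namely $n'=D^2\Phi(q)^{-1}n$ tangent to the interface. Since $n$ is arbitrary and $D^2\Phi(q)$ is invertible, these directions exhaust $\R^d$. Polarizing the symmetric trilinear form then gives $D^3\Phi\equiv0$ off the origin. The hard part is making this derivation rigorous: obtaining the cubic identity from the integrated volume balance requires careful control of the transport density and of where the fibers are cut by $\partial Q_\tension$. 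I would try a second-order Taylor expansion in $s$ around fibers through a point with $D\Phi(q)\cdot n=0$.

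\emph{Conclusion.} Once $D^3\Phi=0$ on the connected set $\R^d\setminus\{0\}$ (here $d\ge2$), $D^2\Phi$ is constant there. Two-homogeneity and continuity at the origin then give $\Phi(q)=\tfrac12 q\cdot\M q$ with $\M>0$, which is (iii).
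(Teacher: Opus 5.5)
Your plan breaks down at the step from fiber balance to $D^3\Phi=0$. Take a fiber $q_0+tn$ with $D\Phi(q_0)\cdot n=0$, cut by the sublevel set $\{\Phi<\Phi(q_0)+h\}$; its balance follows from the unit sublevel by homogeneity. The branch endpoints are $t_\pm=\pm r-\frac{B_3}{6a_0}r^2+o(r^2)$, where $a_0=D^2\Phi(q_0)[n,n]$, $B_3=D^3\Phi(q_0)[n,n,n]$ and $r=(2h/a_0)^{1/2}$. The weight $j_\Phi=\det D^2\Phi$ also varies along the fiber, $j_\Phi(q_0+tn)=j_0+j_1t+o(t)$, and this variation enters at the same order $r^2$ as the length asymmetry. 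Equal $j_\Phi\,dt$-mass on the two branches therefore gives $(j_1-j_0B_3/(3a_0))r^2+o(r^2)=0$, that is,
\[
D^3\Phi(q)[v,v,v]=3\,D^2\Phi(q)[v,v]\,D(\log\det D^2\Phi)(q)[v]\qquad\text{whenever } D\Phi(q)\cdot v=0 .
\]
This is not $D^3\Phi(q)[v,v,v]=0$: on a single fiber, unequal branch lengths can be compensated by the density.

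Finer control of the density or of $\partial Q_\varphi$ will not remove this term; what is missing is an algebraic step. Fix $q\neq0$ and set $H=D^2\Phi(q)$, $\mathcal T=D^3\Phi(q)$, $\ell=D(\log\det D^2\Phi)(q)$ and $V=D\Phi(q)^\perp$. Polarize the identity on $V$ to $\mathcal T(v,w,z)=H(v,w)\ell(z)+H(v,z)\ell(w)+H(w,z)\ell(v)$. Then evaluate Jacobi's formula $\ell(v)=\operatorname{tr}(H^{-1}\mathcal T(v,\cdot,\cdot))$ in an $H$-orthonormal basis of $V$ completed by a multiple of $q$. Here $Hq=D\Phi(q)$, so $V$ is $H$-orthogonal to $q$, and $\mathcal T(q,\cdot,\cdot)=0$ by homogeneity. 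This yields $\ell(v)=(d+1)\ell(v)$, hence $\ell|_V=0$, and only then $\mathcal T|_{V^3}=0$.

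Your bookkeeping of directions also needs this structure. At a fixed $q$ the admissible fiber directions are exactly $n\in V$, a hyperplane, so they do not ``exhaust $\R^d$''. The radial direction comes from $\mathcal T(q,\cdot,\cdot)=0$ together with $\R^d=V\oplus\R q$, since $q\cdot D\Phi(q)=\varphi(q)^2>0$. The vectors $D^2\Phi(q)^{-1}n$ play no role.

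The reflection step is also wrong as written. Substituting $p\mapsto-p$ and using $W=-W$, $G_W(-p)=-G_W(p)$ shows $V(-s)=V(s)$. So the tests at heights $s$ and $-s$ select congruent sets and give only the single inequality $V(s)\ge|W|/2$; the identity $V(s)+V(-s)=|W|$ is false in general. The complementary selection comes from the mirrored slab ($n$ replaced by $-n$ with $a,s$ fixed), or equivalently from $(a,s)\mapsto(-a,-s)$. Both are admissible in (ii) and give the exact balance $V(s)=|W|/2$.

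Your Radon-injectivity localization to fibers is sound. It is equivalent to the Fourier uniqueness argument for the projected signed measure on $n^\perp$.
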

	
	The proof starts with a test and its reflection. In the zero-thickness limit their selected images partition the Wulff body, so two lower bounds by half its volume force equality in both. Varying the height $s$ and the tangential direction then forces balance on each fiber, with volume measured by the Jacobian density. A local expansion shows that this requirement constrains all tangential third derivatives.
	Arbitrary heights $s$ are used to recover every tangential half-space projection; the classification from equal-height tests alone remains open. Figure~\ref{fig:slab-fiber-rigidity} illustrates the slab test and the two weighted branches of a fiber.
	
	\begin{figure}[!htbp]
		\centering
		\scalebox{\CapFigureScale}{%
			\begin{minipage}{\textwidth}
				\centering
				\begin{tikzpicture}[cap panel]
					\CapPanel{a}
					\fill[cap obstacle] (.75,2.15) rectangle (6.80,3.45);
					\draw[cap edge] (.75,2.15) -- (6.80,2.15);
					\draw[cap edge] (.75,3.45) -- (6.80,3.45);
					\draw[cap guide] (1.80,3.45) -- (5.25,2.15);
					\draw[cap guide] (5.25,3.45) -- (5.25,2.15);
					\fill (1.80,3.45) circle (1.8pt) (5.25,2.15) circle (1.8pt);
					\draw[cap normal] (1.80,3.45) -- (1.80,4.43);
					\draw[cap normal] (5.25,2.15) -- (5.25,1.12);
					\node[above left] at (1.72,3.45) {$0$};
					\node at (2.06,4.20) {$n$};
					\node at (5.57,1.35) {$-n$};
					\node[anchor=east,cap label] at (4.95,1.83) {$a-\delta n$};
					\draw[cap normal] (2.15,3.72) -- (5.25,3.72);
					\node[above] at (3.67,3.74) {$a$};
					\draw[{Stealth[length=1.5mm]}-{Stealth[length=1.5mm]},line width=.5pt]
					(.40,2.15) -- (.40,3.45);
					\node[anchor=east] at (.27,2.80) {$\delta$};
					\node[cap label] at (3.50,2.62) {$\C_\delta$};
					\node[anchor=east,font=\small] at (7.00,4.95) {$x$-space};
				\end{tikzpicture}\hfill
				\begin{tikzpicture}[cap panel]
					\CapPanel{b}
					\begin{scope}[shift={(3.10,2.70)},scale=3]
						\path[fill=capGreen!13]
						plot[domain=0:360,samples=140]
						({sqrt(.3475/3)*cos(\x)},
						{sqrt(.3475)*sin(\x)-sqrt(.3475/3)*cos(\x)}) -- cycle;
						\draw[cap edge,draw=capGreen!65!black]
						plot[domain=0:360,samples=140]
						({sqrt(.3475/3)*cos(\x)},
						{sqrt(.3475)*sin(\x)-sqrt(.3475/3)*cos(\x)});
						\path[fill=capPurple!10]
						plot[domain=0:360,samples=140]
						({.25*cos(\x)},{sqrt(.1875)*sin(\x)-.25*cos(\x)}) -- cycle;
						\draw[draw=capPurple!85!black,line width=.75pt]
						plot[domain=0:360,samples=140]
						({.25*cos(\x)},{sqrt(.1875)*sin(\x)-.25*cos(\x)});
					\end{scope}
					\draw[cap guide] (3.85,.48) -- (3.85,3.60);
					\draw[draw=capBlue,line width=1.45pt] (3.85,1.95) -- (3.85,3.15);
					\draw[draw=capAmber!85!black,line width=1.45pt,densely dashed] (3.85,.75) -- (3.85,1.95);
					\fill[capAmber!85!black] (3.85,.75) circle (1.6pt);
					\fill[capInk] (3.85,1.95) circle (1.6pt);
					\fill[capBlue] (3.85,3.15) circle (1.6pt);
					\node[anchor=west,cap label] at (4.08,3.23) {$q_0+t_+(h)n$};
					\node[anchor=west,cap label] at (4.08,1.95) {$q_0$};
					\node[anchor=west,cap label] at (4.08,.62) {$q_0+t_-(h)n$};
					\draw[cap normal] (3.85,3.62) -- (3.85,4.37);
					\node at (4.10,4.16) {$n$};
					\draw[cap leader] (1.80,4.20) -- (2.59,4.74);
					\node[anchor=east,cap label] at (1.80,4.20) {$\Phi=c+h$};
					\draw[cap leader] (1.60,3.45) -- (2.35,3.45);
					\node[anchor=east,cap label] at (1.60,3.45) {$\Phi=c$};
					\node[anchor=east,font=\small] at (7.00,4.95) {$q$-space};
				\end{tikzpicture}
			\end{minipage}%
		}
		\caption{Two-point rigidity (Theorem~\ref{thm:two-point-rigidity}).
			(a) Opposite supporting normals on a slab, with arbitrary boundary values $0,s$. (b) The fiber $q_0+tn$ is tangent to level $c=\Phi(q_0)$ and meets level $c+h$ at $t_\pm(h)$. Hypothesis~(ii) yields equal $j_\Phi\,dt$-mass on its blue and amber branches, with $j_\Phi=\det D^2\Phi$. The ellipsoidal model illustrates the geometry.}
		\label{fig:slab-fiber-rigidity}
	\end{figure}
	
	For comparison, if $\Phi(q)=q\cdot\M q/2$ and $D\Phi(q_0)\cdot n=0$, then
	\[
	\Phi(q_0+tn)=\Phi(q_0)+\frac{t^2}{2}\,n\cdot\M n,
	\qquad j_\Phi=\det D^2\Phi=\det\M.
	\]
	The two branches in Figure~\ref{fig:slab-fiber-rigidity} have equal lengths and constant density, so their weighted masses agree. For a general potential, both the branch lengths and the density may vary. The proof tests whether these two changes can compensate in every direction.
	
	\begin{proof}[Proof of Theorem~\ref{thm:two-point-rigidity}]
		Proposition~\ref{v12:prop:ellipsoid-transport} gives (iii)$\Rightarrow$(i), and (i)$\Rightarrow$(ii) is immediate. We prove (ii)$\Rightarrow$(iii). Put $\mathsf T=D\Phi$, and $j_\Phi=\det D^2\Phi$ away from zero. Strict convexity, two-homogeneity, and coercivity make $\mathsf T$ a diffeomorphism off the origin, with $\mathsf T(Q_\tension)=\operatorname{int}\wulff$ by Legendre duality. Thus change of variables uses the density $j_\Phi$. Evenness gives $m^{\min}_{\tension,0}=|\wulff|/2$.
		
		\emph{Step 1: signed balance.} Fix $n\in\mathbb S^{d-1}$, $a\in n^\perp\setminus\{0\}$, and $s\in\R$. On the slab $\C_\delta$, the data in (ii) give cells $\{q:q\cdot(a-\delta n)\leq s\}$ and $\{q:q\cdot(a-\delta n)\geq s\}$. Letting $\delta\to0^+$ gives
		\[
		I_1:=\int_{Q_\tension\cap\{q\cdot a\leq s\}}\chi_{\{\mathsf T\cdot n>0\}}\,j_\Phi\,dq
		+\int_{Q_\tension\cap\{q\cdot a\geq s\}}\chi_{\{\mathsf T\cdot n<0\}}\,j_\Phi\,dq
		\geq\frac{|\wulff|}{2}.
		\]
		Dominated convergence applies: $\{q\cdot a=s\}$ is null and $\{\mathsf T\cdot n=0\}$ is $j_\Phi\,dq$-null, being the inverse image of a hyperplane under the diffeomorphism.
		
		The mirrored slab, obtained by replacing $n$ with $-n$, gives $I_2\geq|\wulff|/2$ with the signs exchanged. Since $I_1+I_2=|\wulff|$, equality holds in both. The total positive and negative $\mathsf T\cdot n$ masses are each $|\wulff|/2$, so subtracting them gives
		\begin{equation}\label{eq:rigidity-signed-identity}
			\int_{Q_\tension\cap\{q\cdot a\leq s\}}\operatorname{sgn}(\mathsf T(q)\cdot n)j_\Phi(q)\,dq=0
			\quad(a\perp n,\ a\neq0,\ s\in\R).
		\end{equation}
		
		The integral in~\eqref{eq:rigidity-signed-identity} is precisely $I_1-|\wulff|/2=(I_1-I_2)/2$. A nonzero value would therefore make one reflected test fall strictly below the required volume. Allowing all heights $s$ tests every threshold, rather than just half-spaces through the origin.
		
		\emph{Step 2: fiber balance.} The half-space identities in Step~1 contain more information than a single global balance: the tangential direction and the offset are both arbitrary. We use this freedom to recover balance on each line parallel to $n$. 
		Let $\pi_n(q)=q-(q\cdot n)n$ be the orthogonal projection onto $n^\perp$, and define a finite signed measure $\mu$ on $n^\perp$ by
		\[
		\mu(A):=
		\int_{Q_\tension\cap\pi_n^{-1}(A)}
		\operatorname{sgn}(\mathsf T(q)\cdot n)\,j_\Phi(q)\,dq
		\]
		for every Borel set $A\subset n^\perp$. For each unit vector $a\in n^\perp$, let $\mu_a:=(z\mapsto a\cdot z)_\#\mu$ be its one-dimensional pushforward. Since $a\cdot\pi_n(q)=a\cdot q$,~\eqref{eq:rigidity-signed-identity} gives
		\[
		\mu_a((-\infty,s])
		=
		\int_{Q_\tension\cap\{q\cdot a\le s\}}
		\operatorname{sgn}(\mathsf T(q)\cdot n)\,j_\Phi(q)\,dq
		=0
		\quad\text{ for }\;s\in\R.
		\]
		Taking differences shows that $\mu_a$ vanishes on every bounded half-open interval; hence $\mu_a=0$ as a signed Borel measure on $\R$. These one-dimensional identities determine $\mu$. Indeed, for any $0\neq\xi\in n^\perp$, choose $a=\xi/|\xi|$. By the definition of pushforward,
		\[
		\widehat\mu(\xi)
		:= \int_{n^\perp}e^{-i\xi\cdot z}\,d\mu(z)
		= \int_{\R}e^{-i|\xi|t}\,d\mu_a(t)
		=0.
		\]
		Also $\widehat\mu(0)=\mu(n^\perp)=\mu_a(\R)=0$. Thus $\widehat\mu$ vanishes identically, and uniqueness of the Fourier transform for finite signed measures yields $\mu=0$. Equivalently, Fourier inversion for smooth compactly supported test functions, followed by Fubini, gives $\int_{n^\perp}\eta\,d\mu=0$ for every $\eta\in C_c^\infty(n^\perp)$, which implies the same conclusion.
		
		We now express this vanishing projection as a fiber integral. For $z$ in the interior of the projection of $Q_\tension$, write $\{t:z+tn\in Q_\tension\}=(t_-(z),t_+(z))$. Fubini gives
		\[
		I(z):=\int_{t_-(z)}^{t_+(z)}
		\operatorname{sgn}(\mathsf T(z+tn)\cdot n)\,j_\Phi(z+tn)\,dt=0
		\]
		for almost every $z$. This holds at every such $z\neq0$: the convex and concave endpoint functions are continuous, nearby fibers avoid the origin, and $\mathsf T,j_\Phi$ are continuous with $j_\Phi$ locally bounded there. Moreover, $t\mapsto \mathsf T(z+tn)\cdot n$ is strictly increasing, so it has at most one zero. Extend the integrand by zero outside the fiber and apply dominated convergence to obtain continuity of $I$.
		
		The local calculation in the next step requires the same balance on other sublevels. This follows from homogeneity: $\{\Phi<c\}=\sqrt{2c}\,Q_\tension$, $\mathsf T$ is one-homogeneous, and $j_\Phi$ is zero-homogeneous. Scaling transfers the balance to every positive sublevel set on each line avoiding zero.
		
		\emph{Step 3: an infinitesimal identity.} On a fiber tangent to a level set, a slightly higher sublevel cuts out an interval with one branch on each side of the point of tangency. Their lengths may differ, but Step~2 requires their Jacobian-weighted masses to agree. We compare the first terms that can distinguish the two branches. Fix $q_0\neq0$ and a unit $n$ with $D\Phi(q_0)\cdot n=0$. Euler's identity implies $q_0\notin\R n$. Set
		\[
		a_0=D^2\Phi(q_0)[n,n]>0,\quad B_3=D^3\Phi(q_0)[n,n,n],
		\quad j_0=j_\Phi(q_0),\quad j_1=Dj_\Phi(q_0)[n].
		\]
		For the two solutions $t_-(h)<0<t_+(h)$ of $\Phi(q_0+tn)=\Phi(q_0)+h$, Taylor expansion gives, with $r=(2h/a_0)^{1/2}$,
		\[
		\Phi(q_0+tn)=\Phi(q_0)+\frac{a_0}{2}t^2+\frac{B_3}{6}t^3+o(|t|^3),
		\qquad t_\pm(h)=\pm r-\frac{B_3}{6a_0}r^2+o(r^2).
		\]
		Writing $t_\pm=\pm r+c_2r^2+o(r^2)$, the latter follows by substitution; the coefficient of $r^3$ gives $a_0c_2+B_3/6=0$. Step~2 equates the $j_\Phi\,dt$ masses on $(t_-,0)$ and $(0,t_+)$. Since $j_\Phi(q_0+tn)=j_0+j_1t+o(|t|)$,
		\[
		0=j_0(t_++t_-)+\frac{j_1}{2}(t_+^2+t_-^2)+o(r^2)
		=\left(j_1-\frac{j_0B_3}{3a_0}\right)r^2+o(r^2).
		\]
		The term involving $B_3$ measures the unequal branch lengths, while $j_1$ measures the change of density along the fiber. They can compensate on a single fiber; the displayed equality expresses exactly the compensation required. Divide by $r^2$ and let $h\to0^+$. Homogeneity in the direction gives
		\begin{equation}\label{eq:rigidity-cubic-identity}
			D^3\Phi(q)[v,v,v]
			=3D^2\Phi(q)[v,v]D(\log\det D^2\Phi)(q)[v]
		\end{equation}
		whenever $q\neq0$ and $D\Phi(q)\cdot v=0$.
		
		\emph{Step 4: vanishing of the cubic tensor.} The identity from Step~3 controls third derivatives in directions tangent to a level set. Homogeneity supplies the radial components, and Jacobi's formula relates the remaining components to their own trace. At fixed $q\neq0$, write $H=D^2\Phi(q)$, $\mathcal T=D^3\Phi(q)$, $\ell=D\log j_\Phi(q)$, and $V=D\Phi(q)^\perp$. Homogeneity gives $Hq=D\Phi(q)$ and $\mathcal T(q,\cdot,\cdot)=0$, so $V$ is the $H$-orthogonal complement of $q$.  The right-hand side of the identity below defines a symmetric trilinear form on $V$, with diagonal value $3H(v,v)\,\ell(v)$. By~\eqref{eq:rigidity-cubic-identity}, this agrees with $\mathcal T(v,v,v)$ for every $v\in V$. Since a symmetric trilinear form is determined by its diagonal values, polarization gives 
		\begin{equation}\label{eq:rigidity-polarized-identity}
			\mathcal T(v,w,z) = H(v,w)\,\ell(z)+H(v,z)\,\ell(w)+H(w,z)\,\ell(v),
		\end{equation}
		for all $v,w,z\in V$.
		
		Choose an $H$-orthonormal basis $e_1,\ldots,e_{d-1}$ of $V$, completed by $e_d=q/(q\cdot Hq)^{1/2}$. Since $e_d$ is parallel to $q$ and $\mathcal T(q,\cdot,\cdot)=0$, we have $\mathcal T(v,e_d,e_d)=0$, so the radial term in the trace vanishes. Moreover, for every $v\in V$, $H$-orthonormality gives
		\[
		v=\sum_{i=1}^{d-1}H(v,e_i)\,e_i,
		\qquad
		\sum_{i=1}^{d-1}H(v,e_i)\,\ell(e_i)=\ell(v).
		\]
		Jacobi's formula and~\eqref{eq:rigidity-polarized-identity}
		therefore give
		\[
		\begin{aligned}
			\ell(v)&=\operatorname{tr}\bigl(H^{-1}\mathcal T(v,\cdot,\cdot)\bigr)
			=\sum_{i=1}^{d-1}\mathcal T(v,e_i,e_i)\\
			&=(d-1)\,\ell(v)+2\sum_{i=1}^{d-1}H(v,e_i)\,\ell(e_i)
			=(d+1)\,\ell(v).
		\end{aligned}
		\]
		The trace calculation gives both $\ell(v)$ and $(d+1)\ell(v)$ for the same quantity. Thus the compensation in every tangential direction forces $\ell|_V=0$ and $\mathcal T|_{V^3}=0$. Since $\R^d=V\oplus\R q$ and every component involving $q$ vanishes, $D^3\Phi(q)=0$. Connectedness of $\R^d\setminus\{0\}$ for $d\geq2$ makes the Hessian a constant positive definite matrix $\M$. Two-homogeneity then gives $\Phi(q)=(q\cdot \M q)/2$.
	\end{proof}
	This conclusion has a classical counterpart in Deicke's theorem. In Brickell's $C^4$-formulation~\cite{brickell1965deicke}, constancy of the determinant of $D^2(\tension^2/2)$ forces the Hessian itself to be constant for a positive one-homogeneous $\tension$. Here the hypothesis comes instead from two-point transported boundary-image tests. They yield the full cubic identity, so the contraction above proves $D^3\Phi=0$ directly under the stated $C^3$-assumption. The characterization therefore follows without appealing to the additional regularity in that formulation of Deicke's theorem.
	
	\paragraph{Bounded ellipsoidal tests.}
	The argument also explains how a nonquadratic potential produces a counterexample. If every fiber had the required balance, the proof would force $D^3\Phi=0$. Hence some fiber is unbalanced; continuity and the projection argument detect this through a tangential half-space for which the integral appearing in~\eqref{eq:rigidity-signed-identity} is nonzero. One of the two reflected limiting tests then has a strict deficit. Convergence as $\delta\to0^+$ preserves that strict inequality for a sufficiently small positive slab thickness.
	
	The resulting points, normals, and heights can be kept exactly fixed while the slab is replaced by a bounded smooth strictly convex ellipsoid: $n\cdot(a-\delta n)<0$ allows the realization in Theorem~\ref{rev:thm:facet}. The cells and captured image are unchanged. Conversely, quadratic potentials satisfy these tests by Proposition~\ref{v12:prop:ellipsoid-transport}. Thus every nonquadratic potential in the stated class has a strict two-point deficit on a bounded ellipsoidal obstacle.
	
	\paragraph{Hausdorff persistence of finite deficits.}
	We now fix one of the preceding configurations with a strict deficit and perturb the Wulff body. The order matters: the boundary points, normals, and heights are fixed before any smoothing. Their slope cells therefore remain fixed. Although the inverse transport and the cap cuts vary, closedness of the cells bounds the limsup of the nearby captured volumes by the captured volume for $W$. Continuity of the minimum reference cap then preserves the gap.
	\begin{lemma}[Upper semicontinuity of finite boundary images]\label{lem:finite-image-usc}
		Let $\{W_j\}_{j=1}^{\infty},W\subset\R^d$ be convex bodies containing the origin in their interiors, with $d_H(W_j,W)\to0$, where $d_H$ is Hausdorff distance. Set $\tension_j=h_{W_j}$, $\tension=h_W$, $\Phi_j=\tension_j^2/2$, and $\Phi=\tension^2/2$. Fix $N$ distinct points $F=\{x_1,\ldots,x_N\}$ on a convex obstacle, supporting unit normals $n_i$, and values $\psi_i$. Their closed cells are
		\[
		\mathcal A_i=\{q:\, q\cdot(x_k-x_i)\leq\psi_k-\psi_i,\;\;\text{for every }k\}.
		\]
		Suppose $\lambda_j\to\lambda$ as $j\to\infty$, where $-1<\lambda<\lambda_*(\tension)$ and $-1<\lambda_j<\lambda_*(\tension_j)$ for all sufficiently large $j$. Define
		\[
		\mathscr B_j=\bigcup_{i=1}^N
		\left(\partial\Phi_j(\mathcal A_i)\cap W_j\cap
		\{p:\, p\cdot n_i>\lambda_j\,\tension_j(-n_i)\}\right),
		\]
		and define $\mathscr B$ analogously with $W,\Phi,\lambda$. Then
		\begin{equation}\label{eq:finite-image-usc}
			\limsup_{j\to\infty}|\mathscr B_j|\leq|\mathscr B|,
			\qquad
			m_{\tension_j,\lambda_j}^{\min}\xrightarrow[j\to\infty]{} m_{\tension,\lambda}^{\min}.
		\end{equation}
		No smoothness, strict convexity, or symmetry is assumed.
	\end{lemma}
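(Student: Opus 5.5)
Two things have to be proved: an upper semicontinuity of the captured volumes, and convergence of the minimal cap volume. I would treat them separately. Hausdorff convergence gives $\tension_j\to\tension$ uniformly on the sphere, since support functions are $1$-Lipschitz in $d_H$. Hence $\Phi_j\to\Phi$ locally uniformly, and by duality the gauges $\rho_{W_j}\to\rho_W$ locally uniformly as well.

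\emph{Convergence of $m^{\min}$.} For each $n$ the cap $K_{\tension_j,\lambda_j}(n)$ is cut from $W_j$ by the half-space $\{p\cdot n\ge\lambda_j\tension_j(-n)\}$, and the thresholds converge uniformly in $n$. Since $W_j\to W$ in Hausdorff distance, $|W_j\triangle W|\to0$ (boundaries of convex bodies are Lebesgue null). The slab between the cutting hyperplanes, intersected with a fixed ball, has volume bounded by a constant times $\sup_n|\lambda_j\tension_j(-n)-\lambda\tension(-n)|$. Together these give $m_{\tension_j,\lambda_j}(n)\to m_{\tension,\lambda}(n)$ uniformly in $n$, so the minima converge.

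\emph{Upper semicontinuity.} I would show $\limsup_j\chi_{\mathscr B_j}\le\chi_{\overline{\mathscr B}}$ pointwise, then argue that $\overline{\mathscr B}\setminus\mathscr B$ is null, and apply reverse Fatou on a common ball. Take $p$ with $p\in\mathscr B_j$ for infinitely many $j$. Along that subsequence pick a fixed label $i$, so $p\in W_j$, $p\cdot n_i>\lambda_j\tension_j(-n_i)$, and $p\in\partial\Phi_j(q_j)$ with $q_j\in\mathcal A_i$. By duality $q_j\in\partial\Phi_j^*(p)$, and these sets are bounded uniformly because $\Phi_j^*=\rho_{W_j}^2/2$ are uniformly Lipschitz near $p$. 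After a further subsequence $q_j\to q\in\mathcal A_i$, using that the cell is closed. Stability of subdifferentials under locally uniform convergence of convex functions then gives $p\in\partial\Phi(q)$. In the limit $p\in W$ and $p\cdot n_i\ge\lambda\tension(-n_i)$.

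So $p$ lies in the closed-cut version $\widetilde{\mathscr B}=\bigcup_i\partial\Phi(\mathcal A_i)\cap W\cap\{p\cdot n_i\ge\lambda\tension(-n_i)\}$. This set differs from $\mathscr B$ only by $\partial W$ and finitely many hyperplanes, all Lebesgue null. Every set involved lies in a fixed ball, because $W_j\subset W+B_1$ eventually. Reverse Fatou therefore yields $\limsup_j|\mathscr B_j|\le|\widetilde{\mathscr B}|=|\mathscr B|$.

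\emph{Main obstacle.} The delicate point is the subdifferential closure: one must pass to the limit in the relation $p\in\partial\Phi_j(q_j)$ with both $\Phi_j$ and $q_j$ varying. I would verify it directly from the inequality $\Phi_j(y)\ge\Phi_j(q_j)+p\cdot(y-q_j)$, letting $j\to\infty$ with locally uniform convergence and continuity of $\Phi$. The measurability of $\mathscr B_j$ is already covered by the earlier $F_\sigma$ argument.
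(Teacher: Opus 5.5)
Your proposal is correct and follows essentially the paper's route. In both, you fix one label along a subsequence, use closedness of $\mathcal A_i$ and Hausdorff convergence to pass to the limit in $p\in W_j$ and in the cap inequality, discard the null cutting hyperplanes, and apply Fatou on a common ball. Two technical details differ. You pass to the limit in the set-valued relation $p\in\partial\Phi_j(q_j)$, whereas the paper uses the a.e.\ inverse gradients $G_j=D\Phi_j^*$ and their a.e.\ convergence. For the reference caps, you prove convergence of $m_{\tension_j,\lambda_j}(n)$ uniformly in $n$ by a symmetric-difference and slab estimate, instead of extracting convergent minimizing directions. One notational point: the set written $\overline{\mathscr B}$ in your plan should be your closed-cut set $\widetilde{\mathscr B}$, not the topological closure of $\mathscr B$.
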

	\begin{proof}[Proof of Lemma~\ref{lem:finite-image-usc}]
		We work in image space and use the inverse maps to test cell membership. This keeps the comparison in a fixed ambient space even when the forward subdifferentials are multivalued. Hausdorff convergence and positivity give $rB_1\subset W_j,W\subset RB_1$ for fixed $0<r<R$ and all large $j$. The gauges converge locally uniformly, hence so do the convex dual potentials
		$\Phi_j^*=\rho_{W_j}^2/2\longrightarrow\rho_W^2/2=\Phi^*$, as $j\to\infty$.
		Outside a common null set all gradients $G_j=D\Phi_j^*$ and $G=D\Phi^*$ exist, and $G_j(p)\to G(p)$ as $j\to\infty$. Indeed, local uniform convergence bounds the subgradients locally, and every cluster point belongs to the singleton $\partial\Phi^*(p)$.
		
		By~\eqref{v12:eq:inverse-image}--\eqref{v12:eq:labeled-graph}, membership in $\mathscr B_j$ is, almost everywhere, equivalent to $p\in W_j$ and, for some $i$, $G_j(p)\in \mathcal A_i$ and $p\cdot n_i>\lambda_j\,\tension_j(-n_i)$. Exclude also $\partial W$ and the finitely many hyperplanes $p\cdot n_i=\lambda\,\tension(-n_i)$. If a remaining point lies in $\mathscr B_j$ infinitely often, a subsequence uses the same label $i$. Closedness of $\mathcal A_i$ gives $G(p)\in \mathcal A_i$, while Hausdorff convergence gives $p\in W$ and the limiting cap inequality is strict. Thus
		\[
		\limsup_{j\to\infty}\chi_{\mathscr B_j}(p)\leq\chi_{\mathscr B}(p)
		\quad\text{for almost every }p.
		\]
		All images lie in $RB_1$. Applying Fatou's lemma to their complements in this ball proves the first assertion.
		
		If $n_j\to n$ as $j\to\infty$, the indicators of $K_{\tension_j,\lambda_j}(n_j)$ converge almost everywhere to that of $K_{\tension,\lambda}(n)$, outside the limiting Wulff boundary and cutting hyperplane. Dominated convergence gives convergence of the cap volumes. Applying this both to convergent subsequences of minimizing directions and to a fixed limiting minimizer proves convergence of their minima.
	\end{proof}
	
	A limiting inverse slope may reach a cell boundary and acquire an additional label, even if the nearby slopes did not have that label. This explains why the limiting image may gain volume and why the conclusion is upper semicontinuity. If $G(p)$ avoids all cell boundaries for almost every $p\in W$, the same argument gives full continuity. A strict deficit needs only the one-sided bound: nearby captured volumes cannot rise by enough to close a fixed positive gap while the reference cap volume converges.
	\begin{corollary}[Hausdorff persistence of strict finite deficits]\label{cor:strict-deficit-persistence}
		Let $W$ be a convex body containing the origin in its interior, let $-1<\lambda<\lambda_*(h_W)$, and suppose that a fixed finite configuration has
		\[
		\eta:=m_{h_W,\lambda}^{\min}-|\mathscr B|>0.
		\]
		There exists $\varepsilon>0$ such that every convex body $\widetilde W$ containing the origin in its interior and every $\widetilde\lambda\in(-1,\lambda_*(h_{\widetilde W}))$ with $d_H(\widetilde W,W)+|\widetilde\lambda-\lambda|<\varepsilon$
		have, for the same configuration,
		\[
		m_{h_{\widetilde W},\widetilde\lambda}^{\min}-|\widetilde{\mathscr B}|\geq\eta/2.
		\]
		The perturbed bodies may be nonsmooth or non-even.
		In particular, every even nonellipsoidal anisotropy in the class of Theorem~\ref{thm:two-point-rigidity} fails the transported $\lambda$-ABP property for every $|\lambda|<\eta_\tension$, for some $\eta_\tension>0$.
	\end{corollary}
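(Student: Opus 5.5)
The plan is a contradiction argument based on Lemma~\ref{lem:finite-image-usc}. Suppose no such $\varepsilon$ exists. Then there are convex bodies $W_j$ containing the origin in their interiors, and adhesion parameters $\lambda_j\in(-1,\lambda_*(h_{W_j}))$, such that $d_H(W_j,W)+|\lambda_j-\lambda|<1/j$. For each $j$ the same finite configuration (points $x_i$, normals $n_i$, heights $\psi_i$, hence the same closed cells $\mathcal A_i$) satisfies
\[
m_{h_{W_j},\lambda_j}^{\min}-|\mathscr B_j|<\eta/2 .
\]
The hypotheses of Lemma~\ref{lem:finite-image-usc} hold: $\lambda_j\to\lambda$ with $-1<\lambda<\lambda_*(h_W)$, and $\lambda_j$ lies in the admissible range for each $j$. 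The lemma then gives $\limsup_j|\mathscr B_j|\le|\mathscr B|$ and $m_{h_{W_j},\lambda_j}^{\min}\to m_{h_W,\lambda}^{\min}$. Passing to the liminf,
\[
\eta/2\ge\liminf_j\bigl(m^{\min}_j-|\mathscr B_j|\bigr)\ge m^{\min}_{h_W,\lambda}-|\mathscr B|=\eta ,
\]
which contradicts $\eta>0$. No smoothness or symmetry enters, because the lemma needs neither.

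For the ``in particular'' assertion, take $\tension$ even and nonellipsoidal in the class of Theorem~\ref{thm:two-point-rigidity}. By that theorem, property (ii) fails. The paragraph on bounded ellipsoidal tests then supplies a fixed two-point configuration with a strict deficit $\eta>0$ at $\lambda=0$, realized on a bounded smooth strictly convex ellipsoid.

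Apply the first part with $\widetilde W=W$, so the Hausdorff distance is zero. Since $\tension$ is even, $\lambda_*(\tension)=1$, and the admissibility condition $\widetilde\lambda\in(-1,1)$ holds automatically for $|\widetilde\lambda|<\varepsilon\le1$. The first part then yields a deficit of at least $\eta/2$ at every such $\widetilde\lambda$, so the configuration violates \eqref{v12:eq:transported-property}, which is required for finite sets on any convex obstacle. Set $\eta_\tension=\min\{\varepsilon,1\}$.

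One point needs checking: the captured set in Definition~\ref{rev:def:abp} uses the smooth image $D\Phi(q)\in\operatorname{int}W$, whereas the lemma uses $\partial\Phi(\mathcal A_i)\cap W$. These differ only on $\partial W$, which is null, so the volumes agree.

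The main obstacle is the careful limit step. The family $\mathscr B_j$ depends on $j$ through both $W_j$ and the cap cuts, and only upper semicontinuity is available, not continuity. The argument must therefore use exactly the one-sided inequality in the right direction, which it does: captured volume can only drop in the limit, while the reference volume converges.
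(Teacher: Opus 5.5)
Your proposal is correct and follows the paper's proof. The contradiction argument uses exactly the one-sided bound $\limsup_j|\mathscr B_j|\le|\mathscr B|$ and the convergence of $m^{\min}$ from Lemma~\ref{lem:finite-image-usc}, and the final assertion is obtained, as in the paper, by fixing the strict zero-adhesion two-point deficit supplied by Theorem~\ref{thm:two-point-rigidity} and varying only $\lambda$ with $\widetilde W=W$ (the ellipsoidal realization you mention is harmless but unnecessary, since the slab $\C_\delta$ is already an admissible obstacle for finite data in Definition~\ref{rev:def:abp}).
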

	\begin{proof}[Proof of Corollary~\ref{cor:strict-deficit-persistence}]
		Lemma~\ref{lem:finite-image-usc} gives a lower limit at least $\eta$ for the deficits along every convergent sequence. More explicitly, eventually the reference volume is at least $m_{h_W,\lambda}^{\min}-\eta/4$, and the captured volume is at most $|\mathscr B|+\eta/4$. Their difference is therefore at least $\eta/2$. A contradiction argument yields the stated neighborhood. The last assertion follows from the strict finite deficit supplied at zero adhesion by Theorem~\ref{thm:two-point-rigidity}, keeping the configuration fixed and varying only $\lambda$.
	\end{proof}
	
	For the square model above, fix $\varepsilon=1/4$. Its captured area is $3/2$, whereas its minimum half-cap area is $2$, so $\eta=1/2$. Every sufficiently close smooth Wulff body has, for exactly the same points, normals, and heights, a deficit of at least $1/4$. The corners and facets may disappear under smoothing; the adverse cell assignment still removes enough volume.
	
	\paragraph{Approximation and its obstructions.}\label{rev:sec:approximation}
	A natural route to nonsmooth anisotropies would be to prove the transported estimate for smooth approximants and then pass to the limit in the capillary inequality. The persistence result identifies a restriction on this route: a strict finite deficit survives ordinary smoothing, so an approximating sequence cannot also satisfy the unrestricted transported property.
	
	To state this distinction precisely, for $-1<\lambda<\lambda_*(\tension)$, a \emph{smooth transported-ABP approximation} is a sequence of positive convex one-homogeneous tensions $\tension_j$, smooth away from zero, converging uniformly to $\tension$ on $\mathbb S^{d-1}$, with $D^2\Phi_j>0$, $\Phi_j=\tension_j^2/2$, $\lambda<\lambda_*(\tension_j)$, and each $W_{\tension_j}$ satisfying Definition~\ref{rev:def:abp}.
	
	If such an approximation exists, Proposition~\ref{rev:thm:abp} passes to the limit for a fixed smooth competitor outside a $C^1$ convex obstacle, provided the competitor is $\Phi_j$-regular for every $j$. The finite free and wetted boundary measures give energy convergence, and Lemma~\ref{lem:finite-image-usc} gives convergence of the minimum cap volumes. The obstruction is therefore the availability of approximants with the required geometric property. The next corollary excludes them in the indicated classes.
	
	\begin{corollary}[Obstructions to transported-ABP approximation]\label{cor:6.7}\label{rev:cor:approx-obstruction}
		\leavevmode\begin{enumerate}[label=(\roman*),itemsep=0.2em,leftmargin=*]
			\item An even nonellipsoidal surface tension admits no smooth transported-ABP approximation at zero adhesion by even surface tensions in the class of Theorem~\ref{thm:two-point-rigidity}. If the limiting tension itself belongs to that class, the same conclusion holds for all $|\lambda|<\eta_\tension$, for some $\eta_\tension>0$, even allowing non-even approximants.
			\item If $W=-W$ has a facet, there exists $\varepsilon_W>0$ such that $h_W$ admits no smooth transported-ABP approximation at any $|\lambda|<\varepsilon_W$, even allowing non-even approximants. This holds already for finite tests and for either the minimum-direction constant or a fixed reference direction.
		\end{enumerate}
	\end{corollary}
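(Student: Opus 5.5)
The proof argues by contradiction in both parts: assume a smooth transported-ABP approximation $\tension_j\to\tension$ exists, and fix a single finite configuration with a strict deficit. Keeping that configuration fixed, we pass to Wulff bodies $W_j:=W_{\tension_j}$ close to $W$. The basic observation is that uniform convergence of $\tension_j$ to $\tension$ on $\mathbb S^{d-1}$ is exactly Hausdorff convergence $d_H(W_j,W)\to0$, because support functions determine the bodies and $\|h_{W_j}-h_W\|_{L^\infty(\mathbb S^{d-1})}=d_H(W_j,W)$. With the configuration fixed, Corollary~\ref{cor:strict-deficit-persistence} says that for large $j$ the configuration still has deficit at least $\eta/2>0$ for $W_j$ at every nearby adhesion level. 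This contradicts Definition~\ref{rev:def:abp} for $W_j$, since that definition quantifies over all finite sets on all convex obstacles with arbitrary supporting normals and heights.

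For (ii), the finite configuration comes from Theorem~\ref{rev:thm:facet}. It gives two points on an ellipsoid, opposite normals, equal heights, and a strict zero-adhesion deficit $\eta_0:=|W|/2-|\mathscr B|>0$. Applying Corollary~\ref{cor:strict-deficit-persistence} with $\lambda=0$ gives a neighbourhood $d_H(\widetilde W,W)+|\widetilde\lambda|<\varepsilon$ on which the deficit stays at least $\eta_0/2$, and it places no symmetry assumption on $\widetilde W$. We set $\varepsilon_W:=\varepsilon/2$. For $|\lambda|<\varepsilon_W$ and $j$ large enough that $d_H(W_j,W)<\varepsilon/2$, $W_j$ fails the property at level $\lambda$. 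This also requires $\lambda<\lambda_*(\tension_j)$, which is part of the definition of an approximation.

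For the fixed-reference-direction variant, we use the fact that in Theorem~\ref{rev:thm:facet} the reference is $|W|/2$. For a centrally symmetric $W$ every direction gives this value. For nearby $W_j$, Lemma~\ref{lem:finite-image-usc} and the dominated-convergence argument in its proof show that directional cap volumes converge uniformly in the direction. So any fixed-direction reference is at least $m^{\min}_{\tension_j,\lambda}$, which is close to $|W|/2$. The gap therefore survives, and the claim for finite tests is immediate because the obstruction is itself a finite test.

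For (i), the argument has two cases.

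\emph{Case 1: the limit lies in the class.} Suppose $\tension$ is even, nonellipsoidal, and in the class of Theorem~\ref{thm:two-point-rigidity}. That theorem, together with the paragraph on bounded ellipsoidal tests, supplies a fixed two-point configuration with a strict zero-adhesion deficit. The same persistence argument as in (ii) then gives $\eta_\tension$, and it allows non-even approximants.

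\emph{Case 2: the limit is only even and nonellipsoidal.} Here we argue differently. Each even approximant $\tension_j$ in the class that satisfies the $0$-ABP property is, by Theorem~\ref{thm:two-point-rigidity}, a centered ellipsoid $W_{\M_j}$. The $\tension_j$ converge uniformly, and $c_\tension$ and $C_\tension$ bound them uniformly away from $0$ and $\infty$. Hence the matrices $\M_j$ are bounded with eigenvalues bounded away from zero, and a subsequence converges to a positive definite $\M$. Then $\tension=\tension_\M$ is an ellipsoid, which contradicts the assumption.

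The main delicate point is the compactness step in Case 2. We need the uniform bounds on $\M_j$, which follow from evaluating $\tension_j(\nu)^2=\nu\cdot\M_j\nu$ on the sphere and using uniform convergence. We also need to check that the zero-adhesion range condition $0<\lambda_*(\tension_j)=1$ holds automatically for even tensions. Both are routine, so I expect no serious obstacle beyond this careful bookkeeping.
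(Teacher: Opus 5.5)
Your proposal is correct and follows the paper's argument. You apply persistence (Corollary~\ref{cor:strict-deficit-persistence}) to the fixed strict-deficit tests from Theorem~\ref{rev:thm:facet} and Theorem~\ref{thm:two-point-rigidity}, and at zero adhesion with even approximants you combine rigidity with the closedness of positive definite quadratic forms under uniform limits. The one inessential detour is the appeal to uniform convergence of directional cap volumes in the fixed-direction variant: the trivial bound $m_{\tension_j,\lambda}(n_0)\geq m^{\min}_{\tension_j,\lambda}$ already suffices, exactly as in the paper.
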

	\begin{proof}[Proof of Corollary~\ref{rev:cor:approx-obstruction}]
		For (i) at zero adhesion, Theorem~\ref{thm:two-point-rigidity} makes every even approximant ellipsoidal. Their squared support functions are positive definite quadratic forms; uniform convergence and positivity of the limit preserve this property. For a regular even nonellipsoidal limit, the theorem supplies a strict two-point deficit. Corollary~\ref{cor:strict-deficit-persistence} excludes nearby approximants, even non-even ones, for all sufficiently small $|\lambda|$.
		
		For (ii), apply the same persistence result to the strict deficit supplied by Theorem~\ref{rev:thm:facet}. This uses a fixed negative test, and consequently does not require the approximants to be even. A fixed reference cap has volume at least the minimum directional cap volume, so it cannot remove the deficit.
	\end{proof}
	
	\subsection{Smooth counterexamples}\label{subsec:smooth-counterexamples}
	Rigidity gives a negative test at zero adhesion for every nonquadratic potential in the stated even class, and persistence extends that test to a neighborhood of zero. The explicit construction below has a different purpose: it produces failures at any prescribed adhesion level, arbitrarily close to the Euclidean norm and with uniformly controlled ellipticity.
	
	Proposition~\ref{prop:quartic-obstruction} in Appendix~\ref{app:perturbation} constructs the even support functions
	\[
	h_\varepsilon(u)=1+\varepsilon(u_1^4-6u_1^2u_2^2+u_2^4)
	\]
	of smooth uniformly convex bodies $W_\varepsilon$ in every dimension. 
	Let $\varphi_\varepsilon$ denote the positively one-homogeneous extension of $h_\varepsilon$ to $\mathbb R^d$, with $\varphi_\varepsilon(0)=0$, and set $\Phi_\varepsilon=\varphi_\varepsilon^2/2$. For each compact $\mathcal I\subset\subset(-1,1)$, all sufficiently small positive $\varepsilon$ give failures at every $\lambda\in\mathcal I$. In the plane $h_\varepsilon(\theta)=1+\varepsilon\cos4\theta$.
	
	\paragraph{The moving interface and the points it excludes.}
	In the first coordinate plane take the orthogonal unit vectors
	\[
	n_\beta=(\cos\beta,\sin\beta,0,\ldots,0),
	\qquad a_\beta=(-\sin\beta,\cos\beta,0,\ldots,0).
	\]
	The two equal-height points are $0$ and $v_\varepsilon=a_\beta-\varepsilon^2n_\beta$, with normals $n_\beta$ and $-n_\beta$. Their cells are $q\cdot v_\varepsilon\leq0$ and $q\cdot v_\varepsilon\geq0$. The small normal correction ensures $n_\beta\cdot v_\varepsilon=-\varepsilon^2<0$, so these data are realized on a smooth ellipsoidal obstacle. It changes the captured volume only at order $\varepsilon^2$, leaving the first-order effect of the anisotropy visible. The obstacle may depend on both $\varepsilon$ and $\lambda$.
	
	Use image coordinates $p=xn_\beta+ya_\beta+z$, with $z\perp n_\beta,a_\beta$. The transported cell interface has the graph $y=Y_\varepsilon(x,z)$, whose expansion away from the origin is
	\[
	Y_\varepsilon(x,z)
	=-4\varepsilon\sin(4\beta)\frac{x^3}{x^2+|z|^2}
	+o(\varepsilon).
	\]
	The two labels select, up to null sets,
	\[
	\begin{aligned}
		\mathscr B_\varepsilon(\beta,\lambda)
		={}W_\varepsilon\cap
		\bigl(\{y<Y_\varepsilon,\ x>t_\varepsilon\}
		\cup\{y>Y_\varepsilon,\ x<-t_\varepsilon\}\bigr),
		\quad t_\varepsilon=\lambda h_\varepsilon(n_\beta).
	\end{aligned}
	\]
	For $\sin4\beta>0$, the interface moves down where $x>0$ and up where $x<0$, to first order. Where only one cap condition holds, this removes a strip from each selected region: the points still belong to $W_\varepsilon$, but their new cell assigns the normal whose cut they fail. In the central strip $|x|<|t_\varepsilon|$, both cap conditions fail if $\lambda>0$ and both hold if $\lambda<0$. Changing the label there has no effect. The loss from moving the interface therefore comes from the outer regions $|x|>|t_\varepsilon|$.
	
	Since $\Phi_\varepsilon$ is even, $D\Phi_\varepsilon$ is odd and one-homogeneous. In the plane it therefore maps the separating slope line to an exact line through the origin. Its first-order slope in these coordinates is $-4\varepsilon\sin4\beta$. At $\lambda=0$, its rotation removes two wedges from the selected portions, as illustrated in Figure~\ref{fig:smooth-transport-failure}. To establish a deficit, we must also account for the moving Wulff boundary and compare with the smallest directional cap.
	
	\begin{figure}[!htbp]
		\centering
		\scalebox{\CapFigureScale}{%
			\begin{minipage}{\textwidth}
				\centering
				\begin{tikzpicture}[cap panel]
					\CapPanel{a}
					\begin{scope}[shift={(3.05,2.80)},scale=1.52]
						\path[fill=capBlue!10] plot[domain=0:360,samples=241]
						({capwx(\x)},{capwy(\x)}) -- cycle;
						\draw[cap reference] (0,0) circle (1);
						\draw[cap edge] plot[domain=0:360,samples=241]
						({capwx(\x)},{capwy(\x)}) -- cycle;
						\coordinate (p) at ({capwx(22.5)},{capwy(22.5)});
						\draw[cap leader] ({capwx(22.5)-.54*sin(22.5)},
						{capwy(22.5)+.54*cos(22.5)}) --
						({capwx(22.5)+.54*sin(22.5)},{capwy(22.5)-.54*cos(22.5)});
						\draw[cap normal] (0,0) -- ({cos(22.5)},{sin(22.5)});
						\fill (p) circle (.023);
						\fill (0,0) circle (.018);
						\node[anchor=north east] at (-.04,-.04) {$0$};
					\end{scope}
					\node[cap label] at (3.78,3.46) {$h_\varepsilon(\theta)$};
					\draw[cap leader] (5.17,2.72) -- (4.60,3.08);
					\node[cap label,anchor=west] at (5.17,2.72) {$p(\theta)$};
					\draw[cap leader] (1.41,4.17) -- (2.10,3.99);
					\node[cap label,anchor=east] at (1.41,4.17) {$B_1$};
					\draw[cap leader] (4.88,4.14) -- (3.902,3.977);
					\node[cap label,anchor=west] at (4.88,4.14) {$W_\varepsilon$};
					\node at (3.60,.45) {$h_\varepsilon(\theta)=1+\varepsilon\cos4\theta$};
				\end{tikzpicture}\hfill
				\begin{tikzpicture}[cap panel]
					\CapPanel{b}
					\begin{scope}[shift={(3.05,2.80)},scale=1.52]
						\path[cap liquid] (0,0) --
						plot[domain=-58.1802928206:22.6432391504,samples=90]
						({capwx(\x)},{capwy(\x)}) -- cycle;
						\path[cap secondary] (0,0) --
						plot[domain=121.8197071794:202.6432391504,samples=90]
						({capwx(\x)},{capwy(\x)}) -- cycle;
						\draw[cap edge] plot[domain=0:360,samples=241]
						({capwx(\x)},{capwy(\x)}) -- cycle;
						\draw[draw=capPurple!85!black,line width=.75pt,densely dotted]
						({-1.30*sin(22.5)},{1.30*cos(22.5)}) --
						({1.30*sin(22.5)},{-1.30*cos(22.5)});
						\draw[cap guide,draw=capAmber!85!black,line width=.75pt] (-1.43,-.2864758) -- (1.43,.2864758);
						\draw[cap normal] (0,0) -- ({.67*cos(22.5)},{.67*sin(22.5)});
						\node[anchor=south] at (.55,.36) {$n_\beta$};
						\node at (.49,-.49) {$\mathscr B_\varepsilon$};
						\fill (0,0) circle (.018);
						\node[anchor=north east,inner sep=1pt] at (-.03,-.03) {$0$};
					\end{scope}
					\node[cap label] at (2.30,4.78) {$p\cdot n_\beta=0$};
					\node[cap label,anchor=west] at (5.28,3.23) {$\ell_\varepsilon$};
					\node at (3.60,.45) {$|\mathscr B_\varepsilon|\approx0.8734\,|W_\varepsilon|/2$};
				\end{tikzpicture}
			\end{minipage}%
		}
		\caption{A planar instance of
			Proposition~\ref{prop:quartic-obstruction}, with $\lambda=0$,
			$\varepsilon=0.05$ and $\beta=\pi/8$.
			(a) The support-function perturbation; green dashed: $B_1$. The point labelled $p(\theta)$ is the boundary point of $W_\varepsilon$ with outer unit normal $n_\theta$, given by $p(\theta)=h_\varepsilon(\theta)n_\theta +h_\varepsilon'(\theta)a_\theta$, where $n_\theta=(\cos\theta,\sin\theta)$, $a_\theta=(-\sin\theta,\cos\theta)$.
			(b) The blue and green portions form the captured image $\mathscr B_\varepsilon$, cut by the dotted cap line and the dashed interface
			$\ell_\varepsilon=D\Phi_\varepsilon(\{q:q\cdot v_\varepsilon=0\})$, where $v_\varepsilon=a_\beta-\varepsilon^2n_\beta$. Its area is below $|W_\varepsilon|/2$. The displayed ratio is computed by integrating the area density $h_\varepsilon(h_\varepsilon+h_\varepsilon'')/2$
			over the captured normal angles. The script for reproducing the numerical ratio is available from the authors upon request.}
		\label{fig:smooth-transport-failure}
	\end{figure}
	\vspace{-1.5em}
	
	\paragraph{Comparison with the smallest cap.}
	Let $m_0(\lambda)=|B_1\cap\{p:p_1>\lambda\}|$ and write $M_\varepsilon(\beta,\lambda)$ for the captured volume above. The appendix obtains
	\[
	\begin{aligned}
		m^{\min}_{\tension_\varepsilon,\lambda}
		&=m_0(\lambda)-\varepsilon|a_d(\lambda)|+o(\varepsilon),\\
		M_\varepsilon(\beta,\lambda)
		&=m_0(\lambda)-\varepsilon
		\bigl(a_d(\lambda)\cos4\beta+b_d(\lambda)\sin4\beta\bigr)
		+o(\varepsilon),
	\end{aligned}
	\]
	where $b_d(\lambda)>0$. The coefficient $a_d$ describes the directional cap variation. The additional coefficient $b_d$ is the net contribution of the interface motion together with the remaining outer-boundary variation; it is not the interface term alone. Choosing the orientation so that $(\cos4\beta,\sin4\beta)$ points along $(a_d,b_d)$ makes the captured volume decrease at rate $\sqrt{a_d^2+b_d^2}$, strictly faster than the rate $|a_d|$ of the minimum cap. This comparison includes the minimization over all cap directions.
	
	In the planar case, $a_2(\lambda)=4\lambda(1-\lambda^2)^{3/2}$ and $b_2(\lambda)=4(1-\lambda^2)^2$ are the coefficients in the proof of Proposition~\ref{prop:quartic-obstruction}. For this $M_\varepsilon(\beta,\lambda)$, choosing $\cos4\beta=\lambda$ and $\sin4\beta=\sqrt{1-\lambda^2}$ gives
	\[
	M_\varepsilon(\beta,\lambda)-m^{\min}_{\tension_\varepsilon,\lambda}
	=-4(1-|\lambda|)(1-\lambda^2)^{3/2}\varepsilon
	+o(\varepsilon).
	\]
	
	In the planar case at zero adhesion, $\beta=\pi/8$ and the captured area falls below the reference area by $4\varepsilon+o(\varepsilon)$. In fact, $|W_\varepsilon|=\pi(1-15\varepsilon^2/2)$, so the reference half-area has no first-order change, while the selected area does. This makes the loss particularly visible. The numerical ratio in Figure~\ref{fig:smooth-transport-failure} uses the exact area integral in its caption; at $\varepsilon=0.05$ the curvature factor $h_\varepsilon+h_\varepsilon''$ is still at least $1/4$.
	
	For each fixed $\lambda\in(-1,1)$ the coefficient is strictly negative. The appendix controls the remainders uniformly on compact adhesion intervals, so one sufficiently small positive $\varepsilon$ works throughout such an interval. No uniform deficit is asserted up to the endpoints, where the coefficient tends to zero. These examples use equal boundary heights; the general classification based solely on such tests remains open. \vspace{-.7em}
	
	\paragraph{Perturbations of prescribed Wulff bodies.}
	An invertible linear change carries the example near any centered ellipsoid and multiplies both compared volumes by the same factor, preserving their strict ordering. Non-even examples then follow by first fixing a perturbation and its deficit and only afterwards translating its Wulff body by a sufficiently small amount. This second step uses persistence of the fixed test; translation of the Wulff body is not invoked as an invariance of the transported estimate.
	
	For an arbitrary origin-symmetric convex body, the construction instead begins with a nearby symmetric polytope, fixes one of its facet tests, and smooths the polytope while preserving that deficit. These are the two routes in the following corollary. The first gives $C^k$ approximation with uniform ellipticity bounds; the second gives Hausdorff approximation with ellipticity constants that may depend on the desired accuracy. Along the first family the deficits tend to zero, consistently with validity of the estimate at the limiting ellipsoid itself.
	\smallskip
	
	\begin{corollary}[Smooth counterexamples near prescribed Wulff bodies]\label{cor:smooth-failures-near-ellipsoids}\label{rev:cor:smooth-failures}\vspace{-.5em}
		\leavevmode\begin{enumerate}[label=(\roman*),itemsep=0em,leftmargin=*]
			\item
			Fix \(d\geq 2\), \(\lambda\in(-1,1)\), and a centered ellipsoid \(W_0\). For every \(k\in\N_0\) and every \(\eta>0\), there exist both even and non-even smooth surface tensions \(\tension\) such that
			\[
			\|\tension-h_{W_0}\|_{C^k(\mathbb{S}^{d-1})}<\eta,
			\]
			\(D^2(\tension^2/2)\) is uniformly positive definite, and \(\wulff\) fails the transported \(\lambda\)-ABP property. Ellipticity constants can be chosen uniformly along the approximating families.
			\item Let $W_0=-W_0\subset\R^d$ be any full-dimensional convex body and $\eta>0$. There is an even smooth surface tension $\tension$ with $D^2(\tension^2/2)>0$ away from zero such that $d_H(\wulff,W_0)<\eta$ and $\wulff$ fails the transported $0$-ABP property. Ellipticity constants may depend on $\eta$.
		\end{enumerate}
	\end{corollary}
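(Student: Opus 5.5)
For part~(i), I would start from Proposition~\ref{prop:quartic-obstruction} in Appendix~\ref{app:perturbation}, which I take as given. For the fixed $\lambda\in(-1,1)$ and all sufficiently small $\varepsilon>0$, it provides even support functions $h_\varepsilon=1+\varepsilon(u_1^4-6u_1^2u_2^2+u_2^4)$. These are $C^k$-close to $h_{B_1}\equiv1$, with $\|h_\varepsilon-1\|_{C^k(\mathbb S^{d-1})}\le C_k\varepsilon$, and they have uniformly positive definite $D^2\Phi_\varepsilon$. They also admit a fixed two-point configuration with a strict deficit $\eta_\varepsilon>0$ in the transported $\lambda$-ABP inequality.

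To move near the centered ellipsoid $W_0=LB_1$, with $L$ symmetric positive definite, I would use the affine covariance recorded at the end of Remark~\ref{rem:4.3}. I set $\tension(q)=\varphi_\varepsilon(Lq)$, so that $\wulff=LW_\varepsilon$ and $D\Phi(q)=L\,D\Phi_\varepsilon(Lq)$. Running the change of variables of Proposition~\ref{v12:prop:ellipsoid-transport} in reverse, with $x=L\widetilde x$, $q=L^{-1}\widetilde q$ and $p=L\widetilde p$, does three things:
\begin{itemize}
\item it maps restricted subdifferential cells of transformed data to cells of the original data;
\item it turns the cap test $p\cdot n>\lambda\tension(-n)$ into $\widetilde p\cdot\widetilde n>\lambda\varphi_\varepsilon(-\widetilde n)$ with $\widetilde n=Ln/|Ln|$;
\item it multiplies both the captured volume and every directional cap volume by $\det L$.
\end{itemize}
A finite configuration on a convex obstacle maps to one on the image obstacle, with the supporting normals transformed accordingly. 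So the strict inequality $|\mathscr B|<m^{\min}$ is preserved.

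The $C^k$ closeness is then a composition estimate: $\|\varphi_\varepsilon\circ L-h_{W_0}\|_{C^k(\mathbb S^{d-1})}\le C(L,k)\varepsilon$. Uniform ellipticity transfers through $D^2\Phi=L\,D^2\Phi_\varepsilon(L\cdot)\,L$, with constants uniform in small $\varepsilon$.

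For the non-even family, I would first fix $\varepsilon$ and its configuration with deficit $\eta_\varepsilon$. Then I translate the Wulff body by a small vector $b$, which gives the tension $\tension+b\cdot q$. Corollary~\ref{cor:strict-deficit-persistence}, applied with $\widetilde\lambda=\lambda$, keeps a deficit of at least $\eta_\varepsilon/2$ for $|b|$ small. Two further points need checking here:
\begin{itemize}
\item $\lambda<\lambda_*(\tension+b\cdot q)$, which holds for small $|b|$ because $\lambda_*$ is continuous and $\lambda_*(\tension)=1$;
\item the $C^k$ closeness survives, since the linear term has size $|b|$ in every $C^k$ norm.
\end{itemize}
Ellipticity of $\tfrac12(\tension+b\cdot q)^2$ is an open condition on the $C^2$ norm over the sphere, so it persists, with uniform constants for $|b|$ small.

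For part~(ii), I would first approximate $W_0$ in Hausdorff distance by an origin-symmetric polytope $P$, say $d_H(P,W_0)<\eta/2$; symmetrizing a circumscribed polytope does this. Theorem~\ref{rev:thm:facet} then gives a fixed two-point configuration for $P$ at $\lambda=0$ with a strict deficit. Next I smooth $P$ within the even class: take $P_j=P+\tfrac1jB_1$, then apply a symmetric smoothing that adds a small multiple of $|q|^2$ to $h_{P_j}^2$, so that $D^2\Phi_j>0$ away from zero and $P_j\to P$ in Hausdorff distance. Corollary~\ref{cor:strict-deficit-persistence} shows that the deficit survives for large $j$, and choosing such a $j$ with $d_H(P_j,P)<\eta/2$ finishes the argument.

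The step I expect to be the main obstacle is producing an even smoothing with genuinely positive definite Hessian of $\Phi$ off the origin. I would try mollifying $h_P$ over the rotation group, which preserves evenness and convexity, and then adding $\delta(\tfrac12|q|^2)$ to $\Phi$. The added term is the square of an even norm-type perturbation, so the result stays positively two-homogeneous, convex and even, and its Wulff body is $\delta$-close in Hausdorff distance. The ellipticity constants necessarily degenerate as $\eta\to0$, as the statement allows.
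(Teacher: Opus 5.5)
Your proposal is correct and follows essentially the paper's route. For (i), you apply the affine change $\tension_\varepsilon(Lq)$ to the quartic examples of Proposition~\ref{prop:quartic-obstruction}, then translate the Wulff body slightly, with Corollary~\ref{cor:strict-deficit-persistence} preserving the deficit. For (ii), you take a symmetric polytope with the facet test of Theorem~\ref{rev:thm:facet}, average over rotations, and add an elliptic regularization whose deficit survives by persistence.

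The differences are minor. You regularize by adding $\frac{\delta}{2}|q|^2$ to $\Phi$, which gives $D^2\Phi\geq\delta I$ directly, whereas the paper adds $\varepsilon_j|q|$ to $\tension$. Your intermediate body $P+\frac1j B_1$ is superfluous: it still has facets, so its support function is not smooth, and all the smoothing comes from the rotation averaging.
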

	
	\begin{proof}[Proof of Corollary~\ref{cor:smooth-failures-near-ellipsoids}]
		Write $W_0=LB_1$, where $L=L^T>0$, and let $\tension_\varepsilon$
		be given by Proposition~\ref{prop:quartic-obstruction}. Set $\tension_{\varepsilon,L}(q)=\tension_\varepsilon(L^Tq)$. The changes
		$x=L\widetilde x$, $q=L^{-T}\widetilde q$, $p=L\widetilde p$, and
		$n=L^{-T}\widetilde n/|L^{-T}\widetilde n|$ identify the cells and cap conditions, since $D(\tension_{\varepsilon,L}^2/2)(q)=L D(\tension_\varepsilon^2/2)(L^Tq)$.
		All captured and directional cap volumes acquire the factor
		$\det L$. Thus the strict deficit is preserved, and
		$\tension_{\varepsilon,L}\to h_{W_0}$ in every $C^k(\mathbb{S}^{d-1})$.
		
		For fixed $b\neq 0$, add $t\,b\cdot q$. This translates the Wulff body to $LW_\varepsilon+tb$ and makes its support function non-even for $t\ne0$. For sufficiently small $|t|$, positivity, the prescribed partial-wetting condition and ellipticity persist, as does the strict deficit by Corollary~\ref{cor:strict-deficit-persistence}. Taking $\varepsilon$ and then $|t|$ small gives the required $C^k$-proximity. Convergence in $C^2$ to $h_{W_0}$ yields ellipticity bounds uniform along both families.
		
		For (ii), we first introduce a facet obstruction at the polyhedral level and then preserve its strict deficit while smoothing. Choose an origin-symmetric polytope $P$ with $d_H(P,W_0)<\eta/2$. Theorem~\ref{rev:thm:facet} provides a strict two-point deficit for $P$. Approximate $h_P$ uniformly on the sphere by even smooth support functions, and add a positive multiple tending to zero of the Euclidean norm. This gives even surface tensions $\tension_j$ for which $D^2(\tension_j^2/2)$ is uniformly positive definite away from zero for each $j$.
		
		For completeness, the smoothing can be obtained by averaging $h_P(Rq)$ against a smooth approximate identity on the rotation group. Such averaging preserves convexity, one-homogeneity, and evenness; addition of $\varepsilon_j|q|$ makes the tangential Hessian strictly positive.
		
		Corollary~\ref{cor:strict-deficit-persistence} preserves the strict deficit for all sufficiently large $j$. Choosing such a $j$ with $d_H(W_{\tension_j},P)<\eta/2$ proves the claim.
	\end{proof}
	
	\section{Scope and remaining questions}\label{sec:questions}
	The preceding sections separate the capillary comparison from one proposed mechanism for proving it. In the plane, at zero adhesion, a smooth even nonellipsoidal tension in the class of Theorem~\ref{thm:two-point-rigidity} satisfies the sharp variational inequality although its unrestricted transported ABP property fails. The remaining questions therefore concern both extensions of the comparison itself and estimates that use more information than arbitrary boundary data provide. A further issue is how a comparison with model caps relates to minimization outside a fixed obstacle.
	\vspace{-.9em}
	
	\paragraph{Scope of the comparison and classification.}
	The sharp comparison constant depends on the tension and adhesion parameter but is uniform over convex obstacles. The stability theorem controls three different quantities: the mass outside a principal component, the symmetric difference from a freely translated cap, and the excess of its directional cap volume above the minimum. Corollary~\ref{cor:sharp-component-concentration} proves that the quadratic order of the first estimate is optimal: on a half-plane wall, two separated equality caps with area fractions $1-r$ and $r$ have deficit $\sqrt{1-r}+\sqrt r-1\sim\sqrt r$ as $r\to0^+$. This concentration example does not establish optimality of the square-root order in the shape estimate. Moreover, the comparison cap need not be admissible for the prescribed obstacle; this distinction becomes relevant to localization below.
	
	The quantifiers in the transported property also matter. Definition~\ref{rev:def:abp} allows arbitrary boundary heights, whereas Proposition~\ref{rev:thm:abp} uses only $\psi=u|_\Gamma$ for an auxiliary Neumann solution. Such a trace is constrained by an equation and boundary conditions that are absent from the finite geometric tests. We have not shown that the obstructing configurations arise from these solutions. Even agreement with prescribed heights at two points would not determine the image associated with the full wetted trace: the remaining boundary points can change the slope cells and contribute additional image. Thus an estimate restricted to actual Neumann traces remains open and would require analytic information beyond the unrestricted geometry. Compact data supply the input to the conditional comparison, while finite data detect failure and force rigidity; no equivalence between these classes at a fixed adhesion level is asserted.
	\vspace{-1em}
	
	\paragraph{Positive adhesion in the plane.}
	The first variational question is whether $\EnergyC(E)\geq2\sqrt{m^{\min}_{\tension,\lambda}|E|}$ holds for every planar competitor when $0<\lambda<\lambda_*(\tension)$. The planar completion lemma already applies for every $\lambda>-1$: filling exterior pockets does not increase the energy. The difficulty occurs afterwards, when a portion of the obstacle is filled to replace the wetted trace by a flat section.
	
	Write $F$ for a completed component, $\Gamma$ for its wetted trace, $D$ for the filling inside the obstacle, and $K=K_{\tension,\lambda}(n)$ for the associated model cap. In~\eqref{rev:eq:filling-identity}, the contact excess $\Delta_\tension(\Gamma)$ measures the excess anisotropic wetted measure over that of the flat section with the same total normal flux. Its contribution $-\lambda\,\Delta_\tension(\Gamma)$ is favorable for $\lambda\leq0$ and becomes a possible loss for $\lambda>0$. The filling bound still provides two compensating terms: $R_K(F)$ records the difference between the free-boundary densities $\tension$ and $h_K$, and the added volume $|D|$ increases the Wulff lower bound. Accordingly, it would suffice to prove
	\[
	\lambda\,\Delta_\tension(\Gamma)\leq R_K(F)
	+2|K|^{1/2}\bigl((|F|+|D|)^{1/2}-|F|^{1/2}\bigr).
	\]
	This inequality would compensate the contact loss for each completed component and close the existing comparison argument. It is a sufficient condition; its failure alone would not disprove the capillary inequality, since the argument has also discarded the Wulff deficit of $F\cup D$ and the energy gained during completion.
	
	Theorem~\ref{rev:thm:filling} already covers $\Delta_\tension(\Gamma)=0$. Lemma~\ref{rev:lem:filling} explains why this is broader than flat contact for crystalline tensions: a single vertex of $W$ can support several reflected contact normals, all lying in its normal cone, so the excess can vanish even when those normals vary. Polygonal obstacles and crystalline tensions therefore provide explicit geometries in which to examine the proposed compensation.
	\vspace{-1em}
	
	\paragraph{Higher dimensions.}
	For $d\geq3$ and even $\tension$, the question at zero adhesion is whether every finite-perimeter competitor with an arbitrary wetted trace satisfies
	\[
	\PerAni(E;\R^d\setminus\C)
	\geq d\left(|\wulff|/2\right)^{1/d}|E|^{(d-1)/d}.
	\]
	Central symmetry makes every hyperplane through the origin divide $W$ into equal volumes, which explains the factor $|W|/2$; the exponent $(d-1)/d$ is dictated by perimeter and volume scaling. The filling identity is already valid in every dimension for the prescribed geometries. What is missing from the present variational argument is a reduction of arbitrary wetted traces to fillings of the required form. In the plane, completion reduces contact to a boundary arc, closed by a chord, or to the entire obstacle boundary. Connectedness of a wetted patch in higher dimensions does not supply that same reduction to a hyperplane filling.
	
	The filling theorem, the comparison for two parallel wetted caps in Proposition~\ref{prop:cc-parallel-caps-zero}, and the ellipsoidal comparison establish the inequality in their respective settings. In particular, two separated contact regions are compatible with a sharp bound even when their exterior connection is unrestricted. The unresolved issue is the geometry of a general wetted trace and how to control its replacement. The failures in Section~\ref{rev:sec:obstruction} explain why the unrestricted transported estimate cannot provide that control for general anisotropies.
	\vspace{-1em}
	
	\paragraph{Other transports and rigidity.}
	One may also change the potential while keeping the Wulff body fixed. For a smooth tension strictly convex away from $0$ and a smooth strictly convex profile $F$, consider $\Phi_F=F(\tension)$, normalized by $F'(0)=0$ and $F'(1)=1$. The identity
	\[
	D\Phi_F(q)=F'(\tension(q))\,D\tension(q),\qquad q\neq0,
	\]
	shows what this choice changes. The first normalization makes the transport extend continuously by zero at the origin, and the second preserves its boundary map on $\{\tension=1\}$. Along a slope ray, $D\tension$ is constant, while $F'(\tension)$ determines the radial position of the image point. For example, when $\tension=|\cdot|$, the profile $F(t)=t^4/4$ sends radius $r$ to $r^3$, while fixing the unit sphere. Changing $F$ therefore changes the interior transport and its Jacobian, which enter the balance tested by the boundary images.
	
	Which pairs $(\tension,F)$ satisfy the two-point zero-adhesion estimate? Affine covariance holds for general profiles, as observed in Remark~\ref{rem:4.3}, but Theorem~\ref{thm:two-point-rigidity} also uses the two-homogeneity of $\tension^2/2$. Its classification cannot be transferred to a general $F$ from the boundary normalization alone. For the quadratic profile, classification without evenness or at a prescribed nonzero adhesion level, classification using only equal-height tests, and quantitative rigidity in terms of two-point deficits remain open. The perturbative construction also imposes no uniform bound on obstacle eccentricity; it does not settle estimates restricted to a class with a prescribed eccentricity bound.
	\vspace{-1em}
	
	\paragraph{Localization.}
	A small shape error leaves a separate question: where does the approximating cap sit relative to $\C$? Theorem~\ref{thm:unrestricted-planar-stability} permits free translation and does not ensure that the cap lies outside $\C$ with its facet on the wall. A localization estimate would have to connect proximity in measure to the geometry of the obstacle. For a fixed obstacle,~\eqref{eq:universal-profile-deficit} makes a further distinction: the universal deficit is the sum of the excess above the fixed-obstacle profile and the gap between that profile and the universal cap bound. If the second term is positive, approaching the fixed-obstacle minimum does not make the universal deficit vanish.
	
	Even for a half-plane, localization cannot be uniform over the full adhesion interval. To see the mechanism, take $\C=\{x_2\leq0\}$ and a unit disk centered at $(0,1+h)$, with $h>0$. The disk has no wetted boundary, so its energy is $2\pi$, independently of $h$ and $\lambda$. The reference cap has area
	\[
	m_{|\cdot|,\lambda}^{\min}
	=\int_\lambda^1 2\sqrt{1-t^2}\,dt
	=\arccos\lambda-\lambda\sqrt{1-\lambda^2}.
	\]
	Consequently, for this disk,
	\[
	\Deficit_\lambda=\sqrt{\frac{\pi}{\arccos\lambda-\lambda\sqrt{1-\lambda^2}}}-1
	\longrightarrow 0 \quad\text{ as }\quad \lambda\to(-1)^+. 
	\]
	At the prescribed area $\pi$, the model cap approaches a whole unit disk as its removed segment shrinks, while the competitor remains a distance $h$ from the wall. Since the deficit is independent of $h$, that distance may even diverge along the same limit. This is consistent with comparison to freely translated caps, but prevents the deficit alone from forcing proximity to the wall uniformly as $\lambda\to-1$. At the endpoint itself, zero extension gives $\EnergyC(E)=\PerAni(E;\R^2)$ and $K_{\tension,-1}(n)=W$: equality reduces to the full-space Wulff case, allowing admissible translated dilates without a wetted-face requirement.
	\vspace{-1em }
	
	\paragraph{Generalized minimizers.}
	At fixed adhesion, unbounded obstacles raise a different compactness issue: minimizing sequences may move to infinity. Recentring an escaping component also translates the obstacle, and the resulting limit geometry enters the generalized minimization problem. Nani--Vincini~\cite[Theorem~1.3]{nani2026generalized} recently established isotropic generalized existence through finitely many limiting regions; their Theorem~6.8 relates the profile and generalized minimizers to the obstacle's asymptotic geometry. For centered ellipsoidal Wulff bodies, the affine change of Section~\ref{rev:sec:ellipsoids} transfers these results because it multiplies energy and volume by the same fixed determinant factor. Extending this theory beyond ellipsoids remains a separate anisotropic problem. Their obstacle-variation theorem~\cite[Theorem~3.8]{nani2026generalized} changes $\C$; Corollary~\ref{cor:strict-deficit-persistence} instead varies $W$ while keeping a finite test fixed. These two continuity statements concern different objects and serve different purposes.
	
	\section*{Statements and Declarations}
	
	\paragraph{Funding.}
	LC was partially supported by ANID Chile through the Postdoctoral Fondecyt Grant No. 3240062. AQ was partially supported by FONDECYT grant 1231585 and by grant PID2023-148028NB-I00, funded by MICIU/AEI/10.13039/501100011033 and by ERDF/EU. \vspace{-.7em}
	
	\paragraph{Author contributions.} All authors contributed equally to this work.\vspace{-.7em}
	
	\paragraph{Data availability.}
	No datasets were generated or analyzed in this study. No external research dataset is required for these proofs. \vspace{-.7em}
	
	\paragraph{Code availability.}
	The figures were rendered in TikZ from code prepared with assistance from ChatGPT Astra 6.0 and reviewed by the authors. The Python script reproducing the numerical area ratio in Figure~\ref{fig:smooth-transport-failure} by evaluating the area integral specified in its caption is available from the authors upon request.\vspace{-.7em}
	
	\paragraph{Conflict of interest.}
	The authors have no competing interests to declare that are relevant to the content of this article. \vspace{-.7em}
	
	\paragraph{Use of artificial intelligence.}
	The authors used ChatGPT (OpenAI; Astra 6.0) and Claude (Anthropic; Opus 5.5) to improve the English of the manuscript and to assist with the development and exposition of Sections~\ref{sec:variat_comp_planar_stab}--\ref{sec:questions}, including the discussion of related literature and the geometric interpretation of the obstruction examples. The figures were rendered in LaTeX/TikZ from source code prepared with AI assistance, as disclosed in their captions. The authors reviewed and revised the figure code and checked the diagrams against the geometric constructions and formulas described in the manuscript. The authors critically reviewed and revised all AI-assisted material, verified the mathematical arguments and references, and take full responsibility for the final manuscript. \vspace{-1em}
	
	\paragraph{Acknowledgements.}
	Part of this work was carried out during a visit  to Departamento de Matem\'atica of Universidad T\'ecnica Federico Santa Mar\'ia, whose hospitality is gratefully acknowledged.

	\appendix
	\section{Perturbative failure in every dimension}\label{app:perturbation}
	We prove the perturbative construction used in Section~\ref{subsec:smooth-counterexamples}, including uniformity over compact adhesion intervals.
	
	\begin{proposition}[Perturbative failure in every dimension]\label{prop:quartic-obstruction}
		Let $d\geq2$ and $f(u)=u_1^4-6u_1^2u_2^2+u_2^4$ on $\mathbb S^{d-1}$. For sufficiently small $\varepsilon>0$, $h_\varepsilon=1+\varepsilon f$ is the support function of a smooth, origin-symmetric, uniformly convex body \(W_\varepsilon\).
		Its one-homogeneous extension \(\tension_\varepsilon\) satisfies uniform ellipticity bounds for \(D^2(\tension_\varepsilon^2/2)\).
		
		For every compact interval \(\mathcal I\subset\subset(-1,1)\), there exists \(\varepsilon_0=\varepsilon_0(d,\mathcal I)>0\) such that, whenever \(0<\varepsilon<\varepsilon_0\), the body \(W_\varepsilon\) fails the transported \(\lambda\)-ABP property for every \(\lambda\in \mathcal I\). For each \(\lambda\), two equal-height points on a smooth strictly convex ellipsoidal obstacle suffice.
	\end{proposition}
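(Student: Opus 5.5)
The plan is to carry out a first-order perturbation expansion around the Euclidean case and to reduce the failure to the two coefficients $a_d(\lambda)$ and $b_d(\lambda)$ announced in Section~\ref{subsec:smooth-counterexamples}.

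\emph{Step 1: regularity and convexity of $W_\varepsilon$.} The function $f$ is a harmonic quartic polynomial restricted to the sphere, namely $\operatorname{Re}(u_1+iu_2)^4$. Hence $h_\varepsilon=1+\varepsilon f$ has a positively one-homogeneous extension $\tension_\varepsilon(q)=|q|+\varepsilon f(q)/|q|^3$, which is smooth away from the origin. Its tangential Hessian on the sphere is $I+\varepsilon(\nabla^2_{\mathbb S}f+f I)$. This is uniformly positive for $\varepsilon$ small, so $W_\varepsilon$ is smooth and uniformly convex. Evenness of $f$ gives origin symmetry. For the potential, $D^2(\tension_\varepsilon^2/2)=I+O(\varepsilon)$ in $C^0(\mathbb S^{d-1})$, which yields the uniform ellipticity bounds.

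\emph{Step 2: the test and its captured volume.} I use the two equal-height points $0$ and $v_\varepsilon=a_\beta-\varepsilon^2n_\beta$ with normals $\pm n_\beta$. Since $n_\beta\cdot v_\varepsilon<0$, these data are realized on the ellipsoid $\C$ of the proof of Theorem~\ref{rev:thm:facet}. The captured set is the set $\mathscr B_\varepsilon(\beta,\lambda)$ described in the text. I compute the transported interface by linearizing $D\Phi_\varepsilon(q)=q+\varepsilon D(|q|^{-2}f(q))+O(\varepsilon^2)$ on the hyperplane $q\cdot a_\beta=0$ (the $\varepsilon^2$ tilt of $v_\varepsilon$ contributes only $O(\varepsilon^2)$). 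Inverting to first order gives the graph $y=Y_\varepsilon(x,z)$ together with a uniform $o(\varepsilon)$ remainder on $B_2$. The volume $M_\varepsilon$ then splits, to first order, into two contributions:
\begin{itemize}
\item the boundary variation, given by $\int_{\text{selected part of }\mathbb S^{d-1}}\varepsilon f\,d\calH^{d-1}$ over the regions $\{y<0,x>\lambda\}\cup\{y>0,x<-\lambda\}$;
\item the interface term, given by $\int (\pm Y_\varepsilon)\,dx\,dz$ over the outer regions $|x|>|\lambda|$ inside $B_1\cap\{y=0\}$.
\end{itemize}
The central strip drops out, as explained in the text. Rotating $f$ into the $(n_\beta,a_\beta)$ frame writes $f=\cos4\beta\,f_0+\sin4\beta\,g_0$, with $f_0,g_0$ fixed quartics. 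This makes $M_\varepsilon=m_0-\varepsilon(a_d\cos4\beta+b_d\sin4\beta)+o(\varepsilon)$, where $a_d,b_d$ are explicit integrals depending continuously on $\lambda$.

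\emph{Step 3: the minimal cap.} The same first-order expansion applied to $K_{\tension_\varepsilon,\lambda}(n)$ involves the cut level $\lambda h_\varepsilon(-n)$ and the boundary variation over $\{p\cdot n>\lambda\}$. It gives $m_{\tension_\varepsilon,\lambda}(n)=m_0(\lambda)-\varepsilon A_\lambda(n)+o(\varepsilon)$ uniformly in $n$. Here $A_\lambda$ is the image of $f$ under a rotation-equivariant (Funk--Hecke type) operator. Since $f$ is a degree-$4$ spherical harmonic, this image is a multiple of $f$. Its maximum over the sphere, which is also the maximum of $|A_\lambda|$ by evenness and sign change, equals $|a_d(\lambda)|$, because the cap coefficient coincides with the $\cos4\beta$ coefficient of $M_\varepsilon$ (the interface term carries only $\sin4\beta$). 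This yields $m^{\min}=m_0-\varepsilon|a_d|+o(\varepsilon)$.

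\emph{Step 4: choice of orientation and uniformity.} The main obstacle is showing $b_d(\lambda)>0$ for all $\lambda\in(-1,1)$. The coefficient $b_d$ is the net of the interface gain and the $g_0$-part of the boundary variation, so a sign has to be established. I would first check the planar case by direct integration, expecting $b_2=4(1-\lambda^2)^2$. For general $d$, I would integrate out $z$ in polar coordinates in $n_\beta^\perp\cap a_\beta^\perp$, reducing to a one-variable integral in $x$ with weight $(1-x^2)^{(d-3)/2}$, and verify that the integrand has a fixed sign on $|x|>|\lambda|$.

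Granting this, I choose $(\cos4\beta,\sin4\beta)$ parallel to $(a_d,b_d)$. Then $M_\varepsilon-m^{\min}=-\varepsilon\bigl(\sqrt{a_d^2+b_d^2}-|a_d|\bigr)+o(\varepsilon)$, and the bracket is strictly positive. Continuity in $\lambda$ gives a positive lower bound $c_{\mathcal I}$ on compact $\mathcal I$. The remainders are uniform in $\lambda\in\mathcal I$, because all expansions hold in $C^0$ with constants independent of $\lambda$, and the hyperplanes $x=\pm\lambda$ stay away from $\partial B_1$. This produces the required $\varepsilon_0(d,\mathcal I)$.
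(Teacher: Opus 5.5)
Your outline matches the paper's proof: the same test $0$, $a_\beta-\varepsilon^2n_\beta$ with normals $\pm n_\beta$, the same split of the first variation of $M_\varepsilon$ into $\cos4\beta$ and $\sin4\beta$ parts, and the same choice of $(\cos4\beta,\sin4\beta)$ parallel to $(a_d,b_d)$. Your Funk--Hecke step is a legitimate shortcut for the paper's moment computation of $a_d(\lambda)=\frac{d+4}{d+1}\omega_{d-1}\lambda s^{d+1}$, where $s=\sqrt{1-\lambda^2}$. The conclusion only needs the directional first variation to be $c(\lambda)f(n)$, with $c$ continuous and equal to the $\cos4\beta$ coefficient of $M_\varepsilon$.

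The gap is at the step you single out yourself. The proposition reduces to $b_d(\lambda)\neq0$ on $(-1,1)$, and you only announce a sign check without computing either ingredient. Nonvanishing is not automatic. In the first variation of $M_\varepsilon$, the outer-boundary part of the $\sin4\beta$ coefficient is $+\frac{8\omega_{d-2}}{d}\lambda^2s^d$ (with $\omega_0=1$), while the interface part is negative, so you must rule out cancellation.

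It closes as follows. Since $u_1+iu_2=(x+iy)e^{i\beta}$, in the frame $(x,y,z)$ one has $f=\cos4\beta\,(x^4-6x^2y^2+y^4)-4\sin4\beta\,xy(x^2-y^2)$. On $\{q\cdot a_\beta=0\}$ only the sine part has a nonzero $y$-derivative. With $\rho^2=x^2+|z|^2$, this gives $Y_\varepsilon=-4\varepsilon\sin4\beta\,x^3/\rho^2+O(\varepsilon^2|p|)$. Being odd, it contributes $-8\varepsilon\sin4\beta\int_{D_\ell}x^3\rho^{-2}\,dx\,dz$ from the two outer regions, where $\ell=|\lambda|$ and $D_\ell=\{x>\ell,\ \rho<1\}$.

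On the lower hemisphere, $y=-\sqrt{1-\rho^2}$ and $d\mathcal H^{d-1}=dx\,dz/|y|$ turn $-4xy(x^2-y^2)$ into $4x(2x^2+|z|^2-1)$. Central symmetry doubles this, and for $\lambda<0$ the central strip cancels by oddness in $y$. Adding the two integrands gives
\[
8x\Bigl(x^2+\rho^2-1-\frac{x^2}{\rho^2}\Bigr)=-8x(1-\rho^2)\Bigl(1+\frac{x^2}{\rho^2}\Bigr)<0\quad\text{on }D_\ell .
\]
Hence $b_d(\lambda)=8\int_{D_\ell}x(1-\rho^2)(1+x^2/\rho^2)\,dx\,dz>0$, pointwise, with no reduction in $z$ needed. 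For $d=2$ this equals $4(1-\lambda^2)^2$. The paper instead evaluates the boundary term in closed form and writes $b_d$ with integrand $x(x^2/\rho^2-\ell^2)$, which is positive since $x^2/\rho^2>x^2>\ell^2$; the two integrals agree.

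Two further repairs are needed.

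\emph{Cap thresholds.} Your Step~2 list omits the motion of the thresholds $t_\varepsilon=\lambda h_\varepsilon(n_\beta)=\lambda(1+\varepsilon\cos4\beta)$, which contributes $-\lambda\omega_{d-1}s^{d-1}\cos4\beta$. Without it, the $\cos4\beta$ coefficient of $M_\varepsilon$ is not the directional cap coefficient, and Step~3 fails. That identification also needs an explicit reason. Central symmetry, followed by the reflection $y\mapsto-y$ (which fixes the cosine part), turns the two half-regions into the full cap $\{u\cdot n_\beta>\lambda\}$.

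\emph{Uniformity near the origin.} Transversality with $\partial B_1$ controls only the rim. When $\mathcal I$ contains values near $0$, the cap planes pass near the vertex of the interface cone, where the inverse transport is not smooth. All displacements are $O(\varepsilon r)$ inside $B_{2r}$, so that region costs $O(\varepsilon r^d)$. Letting $r\to0$ after $\varepsilon\to0$ gives a remainder that is $o(\varepsilon)$ uniformly on $\mathcal I$; this is the paper's inner/outer splitting.
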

	
	\begin{proof}[Proof of Proposition~\ref{prop:quartic-obstruction}]
		We compare the first-order expansions of the minimum
		directional cap volume and the volume captured by a two-point configuration. A suitable orientation produces a negative gap. The final remainder estimate makes this gap uniform for adhesion parameters in a compact subinterval of $(-1,1)$. The polynomial
		$q_1^4-6q_1^2q_2^2+q_2^4$
		is homogeneous and harmonic of degree four. Since \(h_\varepsilon\to1\) in every \(C^k(\mathbb{S}^{d-1})\)-norm,
		\[
		D_S^2h_\varepsilon(u)[\tau,\tau]+h_\varepsilon(u)|\tau|^2>0,
		\quad\text{ for every }\; u\in\mathbb{S}^{d-1},\text{ and }\tau\in T_u\mathbb{S}^{d-1}\setminus\{0\},
		\]
		for sufficiently small \(\varepsilon\). Positivity and uniform ellipticity follow by perturbation of the Euclidean norm. Fix \(\lambda\in(-1,1)\), and write
		\[
		s=\sqrt{1-\lambda^2},
		\qquad
		m_0(\lambda)=|B_1\cap\{p: p_1>\lambda\}|.
		\]
		Let \(\omega_k\) denote the volume of the unit
		ball in \(\R^k\), with \(\omega_0=1\), and set
		\[
		a_d(\lambda)
		=
		\frac{(d+4)}{(d+1)}\,\omega_{d-1}\lambda s^{d+1}.
		\]
		
		\emph{Directional cap volumes.}
		The normal velocity of \(\partial W_\varepsilon\) at \(\varepsilon=0\) is \(f\), while the cutting plane in direction \(n\) moves with velocity \(\lambda f(n)\). Hence
		\[
		\left.\frac{d}{d\varepsilon}\right|_{\varepsilon=0}
		m_{\tension_\varepsilon,\lambda}(n)
		=
		\int_{\{u\in\mathbb S^{d-1}:\,u\cdot n>\lambda\}} f(u)\,d\calH^{d-1}(u) - \lambda\,\omega_{d-1}s^{d-1}f(n).
		\]
		For completeness, write $u=tn+\sqrt{1-t^2}\,v$, where $v$ ranges over the unit sphere in $n^\perp$. Let $\mathbb E$ denote averaging with respect to its normalized surface measure, and put $P=I-n\otimes n$. The second and fourth moments are
		\[
		\begin{aligned}
			\mathbb E[v_i v_j]
			&=\frac{P_{ij}}{d-1},\\
			\mathbb E[v_i v_j v_k v_l]
			&=\frac{P_{ij}P_{kl}+P_{ik}P_{jl}+P_{il}P_{jk}}
			{(d-1)(d+1)},
		\end{aligned}
		\]
		for $i,j,k,l\in\{1,\ldots,d\}$. Let $\widetilde f$ denote the homogeneous harmonic quartic polynomial whose restriction to $\mathbb S^{d-1}$ is $f$, and write $\Delta_{n^\perp}:=\Delta-(n\cdot\nabla)^2$.
		Harmonicity and homogeneity give
		\[
		\Delta_{n^\perp}\widetilde f(n)=-12f(n),
		\qquad
		\Delta_{n^\perp}^{\,2}\widetilde f(n)=24f(n).
		\]
		Expanding $\widetilde f(tn+\sqrt{1-t^2}\,v)$, the odd terms vanish upon averaging. The moment identities evaluate the quadratic and quartic terms through the two contractions above, yielding the normalized section average
		\[
		\frac{1}{(d-1)\omega_{d-1}}\int_{\mathbb S^{d-1}\cap n^\perp}f(tn+\sqrt{1-t^2}\,v)\,d\calH^{d-2}(v)=Q_d(t)f(n),
		\]
		\[
		Q_d(t)=t^4-\frac{6t^2(1-t^2)}{d-1}+\frac{3(1-t^2)^2}{(d-1)(d+1)}.
		\]
		For $d=2$, this is the average over the two points of $\mathbb S^0$. Multiplying by the spherical section density and integrating gives
		\[
		(d-1)\int_\lambda^1(1-t^2)^{(d-3)/2}Q_d(t)\,dt
		=\frac{\lambda(1-\lambda^2)^{(d-1)/2}}{d+1}
		\bigl((d+4)\lambda^2-3\bigr),
		\]
		as is checked by differentiation and the value at $\lambda=1$. Therefore
		\[
		\int_{\{u\in\mathbb S^{d-1}:\,u\cdot n>\lambda\}} f(u)\,d\calH^{d-1}(u)
		= \frac{\omega_{d-1}\lambda\,s^{d-1}}{(d+1)}\bigl((d+4)\lambda^2-3\bigr)f(n).
		\]
		Consequently,
		\begin{equation}\label{eq:quartic-cap-variation}
			m_{\tension_\varepsilon,\lambda}(n)
			= m_0(\lambda)-\varepsilon a_d(\lambda)f(n)+o(\varepsilon),
		\end{equation}
		uniformly for $n\in\mathbb{S}^{d-1}$ and $\lambda$ in compact subintervals of $(-1,1)$. Since \(-1\leq f\leq 1\), with both extreme values attained,
		\begin{equation}\label{eq:quartic-minimum-cap}
			m_{\tension_\varepsilon,\lambda}^{\min}
			= m_0(\lambda)-\varepsilon|a_d(\lambda)|+o(\varepsilon).
		\end{equation}
		The joint uniformity follows from the smooth support-function perturbation and the uniform transversality of the cutting hyperplanes with the unit sphere when $\lambda\in \mathcal I\subset\subset(-1,1)$.
		
		\emph{The two-point image.}
		Set
		\[
		n_\beta=(\cos\beta,\sin\beta,0,\ldots,0),
		\qquad
		a_\beta=(-\sin\beta,\cos\beta,0,\ldots,0).
		\]
		Consider the equal-height points
		\[
		0,
		\qquad
		v_\varepsilon=a_\beta-\varepsilon^2n_\beta,
		\]
		with supporting normals \(n_\beta\) and \(-n_\beta\), respectively. Their cells are
		\[
		\{q:q\cdot v_\varepsilon\le0\},
		\qquad
		\{q:q\cdot v_\varepsilon\ge0\}.
		\]
		Set $\Phi_\varepsilon:=\varphi_\varepsilon^2/2$. The set captured by this two-point configuration is
		\[
		\begin{aligned}
			\mathscr B_\varepsilon(\beta,\lambda)
			:= {}&
			\bigl\{D\Phi_\varepsilon(q)\in\operatorname{int}W_\varepsilon:\,
			q\cdot v_\varepsilon\le0,\quad
			D\Phi_\varepsilon(q)\cdot n_\beta
			>\lambda\,\tension_\varepsilon(-n_\beta)\bigr\}
			\\
			&{}\cup
			\bigl\{D\Phi_\varepsilon(q)\in\operatorname{int}W_\varepsilon:\,
			q\cdot v_\varepsilon\ge0,\quad
			D\Phi_\varepsilon(q)\cdot(-n_\beta)
			>\lambda\,\tension_\varepsilon(n_\beta)\bigr\}.
		\end{aligned}
		\]
		Write
		$M_\varepsilon(\beta,\lambda):=|\mathscr B_\varepsilon(\beta,\lambda)|$.
		
		Use coordinates \((x,y,z)\) along \(n_\beta,a_\beta\), and their orthogonal complement. On the unit sphere,
		\[
		f= \cos(4\beta)(x^4-6x^2y^2+y^4) - 4\sin(4\beta)xy(x^2-y^2).
		\]
		Let $y=Y_\varepsilon(x,z)$ be the exact graph of
		$D\Phi_\varepsilon(\{q:q\cdot v_\varepsilon=0\})$.
		Replacing $v_\varepsilon$ by $a_\beta$ changes this graph by $O(\varepsilon^2|(x,z)|)$ and the captured volume by $O(\varepsilon^2)$. Its first-order expansion can therefore be computed from the untilted plane $q\cdot a_\beta=0$:
		\[
		y=Y_\varepsilon(x,z)= -4\varepsilon\sin(4\beta)\frac{x^3}{x^2+|z|^2} +o(\varepsilon).
		\]
		Indeed, if \(P_4\) denotes the rotated homogeneous
		quartic polynomial, then
		\[
		\frac12\tension_\varepsilon(q)^2
		=
		\frac12|q|^2 + \varepsilon\frac{P_4(q)}{|q|^2} + O(\varepsilon^2|q|^2).
		\]
		Differentiating in the \(a_\beta\)-direction on \(q\cdot a_\beta=0\) gives the displayed interface velocity.
		
		For the cosine component, reflection in \(y=0\) shows that the outer-boundary and cap-plane contributions coincide with those of a full directional cap. Their sum is \(-a_d(\lambda)\cos(4\beta)\). Inside $W_\varepsilon$, away from the transported interface, the captured indicator is
		\[
		\chi_{\mathscr B_\varepsilon(\beta,\lambda)}
		=\chi_{\{y<Y_\varepsilon\}}\chi_{\{x>t_\varepsilon\}}
		+\chi_{\{y>Y_\varepsilon\}}\chi_{\{x<-t_\varepsilon\}},
		\qquad t_\varepsilon=\lambda h_\varepsilon(n_\beta).
		\]
		For either sign of $t$, almost everywhere in $x$,
		\begin{equation}\label{eq:cap-indicator-difference}
			\chi_{\{x>t\}}-\chi_{\{x<-t\}}
			=\operatorname{sgn}(x)\chi_{\{|x|>|t|\}}.
		\end{equation}
		Write $\ell=|\lambda|$ and set
		\[
		D_\ell:=\bigl\{(x,z)\in\R\times\R^{d-2}:\, x>\ell,\ x^2+|z|^2<1\bigr\}.
		\]        
		The first variation is linear in the rotated perturbation. For its sine component $-4xy(x^2-y^2)$, the cap-plane velocity is zero because this component vanishes at $\pm n_\beta$. On the lower unit hemisphere, $y=-\sqrt{1-x^2-|z|^2}$ and $d\calH^{d-1}=(1/|y|)dx\,dz$. Projection and central symmetry therefore give the outer-boundary coefficient
		\[
		\begin{aligned}
			&8\int_{\left\{\substack{x>\ell\\x^2+|z|^2<1}\right\}}
			x(2x^2+|z|^2-1)\,dx\,dz
			= \frac{8\omega_{d-2}}{d}\lambda^2s^d.
		\end{aligned}
		\]
		To evaluate the preceding integral, use the section identity
		\[
		\int_{|z|^2<1-x^2}(2x^2+|z|^2-1)\,dz
		=\omega_{d-2}(1-x^2)^{(d-2)/2}\left(2x^2-1+\frac{d-2}{d}(1-x^2)\right),
		\]
		and integrate $8x$ times its right-hand side from $\ell$ to $1$. Multiplying the interface velocity by~\eqref{eq:cap-indicator-difference} at $t=\lambda$ gives the transported-interface coefficient
		\[
		-8\int_{\left\{\substack{x>\ell\\x^2+|z|^2<1}\right\}}
		\frac{x^3}{x^2+|z|^2}\,dx\,dz.
		\]
		For $\lambda<0$, the two cap conditions coincide on the central strip and their difference vanishes there, explaining the threshold $\ell=|\lambda|$. Since
		\[
		\int_{\left\{\substack{x>\ell\\x^2+|z|^2<1}\right\}}
		x\,dx\,dz
		= \frac{\omega_{d-2}}{d}s^d,
		\]
		define
		\[
		b_d(\lambda)
		:=
		8\int_{\left\{\substack{x>\ell\\x^2+|z|^2<1}\right\}}
		x\left(\frac{x^2}{x^2+|z|^2}-\ell^2\right)\,dx\,dz.
		\]
		Combining the preceding formulas, the coefficient of $\sin(4\beta)$ in the first variation is
		\[
		\underbrace{8\int_{D_\ell}x(2x^2+|z|^2-1)\,dx\,dz}_{\text{outer boundary}}
		-
		\underbrace{8\int_{D_\ell}\frac{x^3}{x^2+|z|^2}\,dx\,dz}_{\text{transported interface}}
		=-b_d(\lambda).
		\]
		There is no cap-plane contribution to this coefficient, since the sine perturbation vanishes at $\pm n_\beta$. The factor $8$ combines the factor $4$ in the sine perturbation with the factor $2$ from the two centrally symmetric outer regions. For $\lambda<0$, the outer-boundary contribution from the central strip vanishes by oddness in $y$.
		
		For $d=2$, integration over $\R^0$ is understood to have unit mass. On $D_\ell$, we have $x>\ell\ge0$ and $x^2+|z|^2<1$, so
		\[
		\frac{x^2}{x^2+|z|^2}>x^2>\ell^2.
		\]
		Thus the integrand defining $b_d(\lambda)$ is strictly positive on a set of positive measure, and $b_d(\lambda)>0$ for every $\lambda\in(-1,1)$. We obtain
		\begin{equation}\label{eq:quartic-image-variation}
			M_\varepsilon(\beta,\lambda)
			= m_0(\lambda)-\varepsilon \bigl(a_d(\lambda)\cos4\beta + b_d(\lambda)\sin4\beta\bigr) +o(\varepsilon).
		\end{equation}
		Choose \(\beta\) so that
		\[
		\cos4\beta
		=\frac{a_d(\lambda)}{\sqrt{a_d(\lambda)^2+b_d(\lambda)^2}},
		\qquad
		\sin4\beta
		= \frac{b_d(\lambda)}
		{\sqrt{a_d(\lambda)^2+b_d(\lambda)^2}}.
		\]
		Subtracting~\eqref{eq:quartic-minimum-cap} from~\eqref{eq:quartic-image-variation} gives
		\[
		\begin{aligned}
			M_\varepsilon(\beta,\lambda)
			& - m_{\tension_\varepsilon,\lambda}^{\min}\\
			&\quad= -\varepsilon
			\left(\sqrt{a_d(\lambda)^2+b_d(\lambda)^2}
			- |a_d(\lambda)| \right)
			+o(\varepsilon)<0
		\end{aligned}
		\]
		for sufficiently small \(\varepsilon>0\).
		
		\emph{Uniform remainder.} Fix $\mathcal I\subset\subset(-1,1)$ and set
		\[
		\begin{aligned}
			L(\beta,\lambda)&=-a_d(\lambda)\cos4\beta-b_d(\lambda)\sin4\beta,\\
			R_\varepsilon(\beta,\lambda)&=M_\varepsilon(\beta,\lambda)-m_0(\lambda)-\varepsilon L(\beta,\lambda).
		\end{aligned}
		\]
		We prove $\sup_{(\beta,\lambda)\in[0,2\pi]\times \mathcal I}|R_\varepsilon(\beta,\lambda)|/\varepsilon\to0$. Since $|f|\leq1$, one has $B_{1-\varepsilon}\subset W_\varepsilon$. Fix $0<r<1/4$ and a smooth radial cutoff $0\leq\chi_r\leq1$, zero on $B_r$ and one outside $B_{2r}$. Split
		\[
		M_\varepsilon(\beta,\lambda)=M_\varepsilon^{\mathrm{out},r}(\beta,\lambda)+M_\varepsilon^{\mathrm{in},r}(\beta,\lambda),
		\]
		where
		\[
		\begin{aligned}
			M_\varepsilon^{\mathrm{out},r}(\beta,\lambda)&=\int_{\mathscr B_\varepsilon(\beta,\lambda)}\chi_r(p)\,dp,\\
			M_\varepsilon^{\mathrm{in},r}(\beta,\lambda)&=\int_{\mathscr B_\varepsilon(\beta,\lambda)}(1-\chi_r(p))\,dp.
		\end{aligned}
		\]
		For fixed $r$, the inverse transport is smooth away from zero. Treat the two weighted cell integrals separately. At $\varepsilon=0$, their cell plane is $y=0$ and their cap planes are respectively $x=\lambda$ and $x=-\lambda$. These planes and the unit sphere intersect transversally, uniformly for $\lambda\in \mathcal I$, including triple intersections when present. At $\lambda=0$ the cap planes coincide but occur in separate cell integrals. 
		
		For small $\varepsilon$, transversality persists, and the smooth weight adds no boundary. For fixed $r>0$, we may therefore straighten the active boundary hypersurfaces of each weighted cell integral in smooth local charts. Compactness of $[0,2\pi]\times\mathcal I$ permits a finite collection of such charts, with uniform bounds for small $\varepsilon$. After a partition of unity and a change of variables, the resulting integrals have fixed domains and common compact support, and their integrands have jointly continuous derivatives with respect to $\varepsilon$. Differentiation under the integral sign thus shows that $\partial_\varepsilon M_\varepsilon^{\mathrm{out},r}(\beta,\lambda)$ is jointly continuous in $(\varepsilon,\beta,\lambda)$. Writing $L_r(\beta,\lambda)=\left.\partial_\varepsilon M_\varepsilon^{\mathrm{out},r}(\beta,\lambda) \right|_{\varepsilon=0}$, uniform continuity on the compact parameter set gives
		\[
		M_\varepsilon^{\mathrm{out},r}(\beta,\lambda)
		=
		M_0^{\mathrm{out},r}(\beta,\lambda)
		+\varepsilon L_r(\beta,\lambda)+o_r(\varepsilon),
		\]
		uniformly for $(\beta,\lambda)\in[0,2\pi]\times\mathcal I$.
		
		The Wulff boundary avoids $B_{2r}$ for $\varepsilon<1/4$. The cell-interface velocity is bounded by $C|p|$; a cap plane meets $B_{2r}$ only for $|\lambda|\leq Cr$, when its velocity is bounded by $Cr$. Thus the omitted first-variation integrals satisfy
		\[
		\sup_{(\beta,\lambda)\in[0,2\pi]\times \mathcal I}|L_r(\beta,\lambda)-L(\beta,\lambda)|\leq C_{\mathcal I}r^d.
		\]
		For the inner part, homogeneity and smooth dependence on the sphere yield $D\Phi_\varepsilon^*(p)=p+\varepsilon V(p)+O(\varepsilon^2|p|)$, with one-homogeneous $V$, uniformly in angle. The $O(\varepsilon^2)$ cell tilt is absorbed in the remainder. Hence the exact interface $y=Y_\varepsilon(x,z)$ defined above satisfies, in $B_{2r}$, $|Y_\varepsilon|\leq C\varepsilon r$. The cap thresholds are $\pm t_\varepsilon$, where $t_\varepsilon=\lambda h_\varepsilon(n_\beta)$ and $|t_\varepsilon-\lambda|\leq C\varepsilon|\lambda|$.
		
		If $|\lambda|$ exceeds a sufficiently large multiple of $r$, the local captured set is unchanged: both cap conditions are false for $\lambda>0$ and true for $\lambda<0$. Otherwise $|\lambda|\leq Cr$. Equation~\eqref{eq:cap-indicator-difference} shows that the captured indicators can differ inside $B_{2r}$ only where $|y|\leq C\varepsilon r$ or $\bigl||x|-|\lambda|\bigr|\leq C\varepsilon r$. These strips have total volume at most $C_{\mathcal I}\varepsilon r^d$, so
		\[
		\sup_{(\beta,\lambda)\in[0,2\pi]\times \mathcal I}
		|M_\varepsilon^{\mathrm{in},r}(\beta,\lambda)-M_0^{\mathrm{in},r}(\beta,\lambda)|
		\leq C_{\mathcal I}\varepsilon r^d.
		\]
		Combining the inner and outer estimates gives
		\[
		\limsup_{\varepsilon\to0^+}\sup_{(\beta,\lambda)\in[0,2\pi]\times \mathcal I}\frac{|R_\varepsilon(\beta,\lambda)|}{\varepsilon}\leq C_{\mathcal I}r^d.
		\]
		Letting $r\to0^+$ proves uniformity. Since $b_d(\lambda)>0$, the continuous function $g_d(\lambda):=\sqrt{a_d(\lambda)^2+b_d(\lambda)^2}-|a_d(\lambda)|$ has a positive minimum on $\mathcal I$. Hence the strict deficit persists uniformly on $\mathcal I$ for sufficiently small $\varepsilon>0$.
		
		Finally, $n_\beta\cdot v_\varepsilon=-\varepsilon^2<0$, so Theorem~\ref{rev:thm:facet} realizes these points and normals on a smooth strictly convex obstacle with the same cells and captured image.
	\end{proof}
	
	\smallskip
	
	\begingroup\small
	\bibliographystyle{amsplain}
	\bibliography{citations}
	\endgroup
	
	\bigskip
	
	\begingroup
	\small
	\addressblock{Instituto de Ciencias de la Ingenier\'ia, Universidad de O'Higgins (UOH). Avenida del Libertador Bernardo O'Higgins 611, Rancagua, Chile.}
	\EmailB{lisbeth.carrero@postdoc.uoh.cl}
	
	\EmailB{andres.zuniga@uoh.cl}
	
	\bigskip
	
	\addressblock{Departamento de Matem\'atica, Universidad T\'ecnica Federico Santa Mar\'ia (USM), Avenida Espa\~na 1680, Valpara\'iso, Chile.}
	
	\EmailB{alexander.quaas@usm.cl} 
	\endgroup
	
\end{document}